\setlist[enumerate]{noitemsep}
\setlist[itemize]{noitemsep}
\DeclareMathOperator{\res}{\textup{res}}
\DeclareMathOperator{\Spec}{\textup{Spec}}
\DeclareMathOperator{\Gal}{\textup{Gal}}
\DeclareMathOperator{\Pic}{\textup{Pic}}
\DeclareMathOperator{\Aut}{\textup{Aut}}
\DeclareMathOperator{\tr}{\textup{tr}}
\DeclareMathOperator{\cl}{\textup{cl}}
\DeclareMathOperator{\ab}{\textup{ab}}
\DeclareMathOperator{\et}{\textup{\'et}}
\newcommand{\m}{\mathfrak{m}}
\DeclareMathOperator{\X}{\mathcal{X}}
\DeclareMathOperator{\J}{\mathcal{J}}
\DeclareMathOperator{\Y}{\mathcal{Y}}
\DeclareMathOperator{\Z}{\mathbb{Z}}
\DeclareMathOperator{\Q}{\mathbb{Q}}
\DeclareMathOperator{\B}{\mathbb{B}}
\DeclareMathOperator{\oh}{\mathcal{O}}
\DeclareMathOperator{\Sp}{\textup{Sp}}
\newcommand{\sexact}[5]{\begin{CD}1 @>>> #1 @>#2>> #3 @>#4>> #5 @>>> 1\end{CD}}
\newcommand{\sexactab}[5]{\begin{CD}0 @>>> #1 @>#2>> #3 @>#4>> #5 @>>> 0\end{CD}}
\newcommand{\ob}[1]{\mkern 1.5mu\overline{\mkern-1.5mu#1\mkern-1.5mu}\mkern 1.5mu}
\newtheorem{theorem}{Theorem}[section]
\newtheorem{maintheorem}{Theorem}
\newtheorem{proposition}[theorem]{Proposition}
\newtheorem{lemma}[theorem]{Lemma}
\newtheorem{corollary}[theorem]{Corollary}
\newtheorem*{conjecture*}{Conjecture}
\theoremstyle{definition}
\newtheorem{definition}[theorem]{Definition}
\newtheorem{remark}[theorem]{Remark}
\renewenvironment{abstract}{
	\noindent\textsc{\abstractname.}}
\titleformat{\section}[block]{\centering\normalsize\sc}{\thesection}{1em}{}
\titleformat{\subsection}[runin]{\normalsize\bf}{\thesubsection}{1em}{}[.]
\DeclareSymbolFont{cyrletters}{OT2}{wncyr}{m}{n}
\DeclareMathSymbol{\Sha}{\mathalpha}{cyrletters}{"58}
\title{\bf\normalsize\MakeUppercase{On the Birational Section Conjecture over Finitely Generated Fields}}
\author{\normalsize\sc Mohamed Sa\"idi and Michael Tyler }
\date{}
\begin{document}

\maketitle

\begin{abstract}
\small We investigate the birational section conjecture for curves over function fields of characteristic zero and prove that the conjecture holds over finitely generated fields over $\Q$ if it holds over number fields.
\end{abstract}

\tableofcontents

\section{Introduction and statement of results}

\subsection{The birational section conjecture}

For a smooth, geometrically connected, projective curve $X$ over a characteristic zero field $k$, we define the \emph{absolute Galois group of $X$} to be the group
\[G_X:=\Gal(\overline {k(X)}/k(X))\]
where $k(X)$ denotes the function field of $X$ and $\overline {k(X)}$ is an algebaic closure of $k(X)$.  The Galois group $G_X$ fits into an exact sequence
\[\sexact{G_{X_{\bar k}}}{}{G_X}{}{G_k}\]
where $G_k := \Gal(\bar k|k)$, $\bar k$ being the algebraic closure of $k$ in $\overline {k(X)}$, and $G_{X_{\bar k}}:=\Gal (\overline {K(X)}/K(X) \cdot \bar k)$.  Let $x \in X(k)$ be a $k$-rational point, and let $\tilde x$ be a valuation of $\overline {k(X)}$ extending the valuation $\nu_x$ of $k(X)$ corresponding to $x$.  We will refer to $\tilde x$ as an \emph{extension of $x$ to $\overline {k(X)}$}.  The \emph{decomposition group} $D_{\tilde x}$ of $\tilde x$ fits into a commutative diagram of exact sequences
\[\begin{tikzcd}
1 \arrow{r}{} &I_{\tilde x} \arrow{r}{}\arrow[hookrightarrow]{d}{} &D_{\tilde x} \arrow{r}{}\arrow[hookrightarrow]{d}{} &G_{k(x)} \arrow{r}{}\arrow[equals]{d}{} &1\\
1 \arrow{r}{} &G_{X_{\bar k}} \arrow{r}{} &G_X \arrow{r}{} &G_k \arrow{r}{} &1
\end{tikzcd}\]
where $I_{\tilde x}$ is the \emph{inertia group} at $\tilde x$.  We will refer to a splitting of the lower sequence in the above diagram as a \emph{section of $G_X$}.  A splitting of the upper exact sequence naturally defines a section $s_{\tilde x} : G_k \to G_X$ of $G_X$, with image contained in $D_{\tilde x}$.  
\begin{definition}\label{geometricgaloissections}
We say a section $s$ of $G_X$ is \emph{geometric} if its image $s(G_k)$ is contained in a decomposition group $D_{\tilde x}$ for some $k$-rational point $x\in X(k)$ and some extension $\tilde x$ of $x$ to $\overline {k(X)}$.  In this case, we say that the section $s$ \emph{arises from the point} $x$.
\end{definition}

The birational analogue of Grothendieck's anabelian section conjecture for \'etale fundamental groups may be stated as follows.

\begin{conjecture*} 
Let $k$ be a finitely generated field over $\Q$, and let $X$ be a smooth, projective, geometrically connected curve over $k$.  Then every section of $G_X$ is geometric and arises from a unique $k$-rational point $x\in X(k)$.
\end{conjecture*}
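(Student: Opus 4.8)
The plan is to prove the conjecture under the assumption that it holds over number fields, arguing by induction on $d:=\operatorname{trdeg}(k/\Q)$. For $d=0$ there is nothing to do, so suppose $d\ge 1$ and that the conjecture is already known for all smooth, projective, geometrically connected curves over every finitely generated field over $\Q$ of transcendence degree $<d$. First I would pick a transcendence basis of $k/\Q$ and let $k'$ be the relative algebraic closure in $k$ of the subfield generated by all but one of its elements; this $k'$ is finitely generated over $\Q$ of transcendence degree $d-1$, and $k=k'(B)$ for a smooth, projective, geometrically connected curve $B$ over $k'$. Given a section $s\colon G_k\to G_X$, I would choose a proper flat integral model $\mathcal{X}\to B$ with $k(\mathcal{X})=k(X)$ and a dense open $U\subseteq B$ over which $\mathcal{X}\to B$ is smooth with geometrically connected fibres, so that for each closed point $b\in U$ the fibre $X_b:=\mathcal{X}_b$ is a smooth, projective, geometrically connected curve over the residue field $\kappa(b)$, which is finite over $k'$, hence finitely generated over $\Q$ of transcendence degree $d-1$.

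The heart of the argument would be a specialisation procedure attaching to $s$, for each $b$ in a suitable dense set $\Sigma$ of closed points of $U$, a section $s_b\colon G_{\kappa(b)}\to G_{X_b}$. Write $v_b$ for the valuation of $k$ at $b$, with decomposition group $D_{v_b}\subseteq G_k$ and inertia $I_{v_b}$ relative to a fixed extension of $v_b$ to $\bar k$, and $w_b$ for the divisorial valuation of $k(X)$ along the prime divisor $X_b\subset\mathcal{X}$, whose residue field is the function field of $X_b$ over $\kappa(b)$; thus the decomposition group $D_{\tilde w_b}\subseteq G_X$ of any extension $\tilde w_b$ of $w_b$ to $\overline{k(X)}$ surjects onto $G_{X_b}$ with procyclic inertia. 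One would show that, after conjugating $s$, one has $s(D_{v_b})\subseteq D_{\tilde w_b}$ and that the composite $D_{v_b}\hookrightarrow D_{\tilde w_b}\twoheadrightarrow G_{X_b}$ annihilates $I_{v_b}$, hence descends to the desired section $s_b$. By the inductive hypothesis each $s_b$ is geometric and arises from a \emph{unique} point $x_b\in X_b(\kappa(b))$, which we view as a point of $\mathcal{X}$.

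It would then remain to globalise these points. Using their uniqueness together with a rigidity argument along the fibration $\mathcal{X}\to B$, one expects the $x_b$ (after shrinking $\Sigma$) to be the fibres of a single horizontal prime divisor mapping birationally onto $B$, i.e.\ of a point $x\in X(k)$ with $x\bmod b=x_b$ for all $b\in\Sigma$. Let $s_x$ be the section arising from $x$; by construction $s$ and $s_x$ specialise to the same $s_b$ for every $b\in\Sigma$. One would finish by showing that the specialisation map on sections is injective as $b$ ranges over a dense set --- a Hilbert-irreducibility/Chebotarev statement along $B$ --- so that $s=s_x$ and $s$ is geometric, arising from $x$. Uniqueness of $x$ is then immediate: $X(k)\to\prod_{b\in\Sigma}X_b(\kappa(b))$ is injective since $X$ is proper and $\Sigma$ is dense, and each $x_b$ is unique.

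The hard part will be the specialisation construction: proving that $s$ sends decomposition groups of closed points of the base curve into decomposition groups of divisorial valuations of $k(X)$, and that the induced homomorphism kills inertia, requires a careful comparison of the several fundamental exact sequences --- and of their inertia and decomposition subgroups --- at a divisorial point of the surface $\mathcal{X}$. It is exactly here that the \emph{birational} hypothesis enters, decomposition groups of divisorial valuations being large enough, with full procyclic inertia, to be detected inside $G_X$. A secondary obstacle is the injectivity of specialisation invoked in the globalisation step, which rests on a density argument over the base curve $B$.
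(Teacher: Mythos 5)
Your overall skeleton --- induction on $\operatorname{trdeg}(k/\Q)$, fibering $\mathcal{X}\to B$, specialising $s$ to sections $s_b$ of the fibres, applying the inductive hypothesis to obtain points $x_b$, then globalising --- is the right shape and matches the paper's strategy in broad outline. But two of your steps are precisely the points where all the work lies, and as written they hide genuine obstructions rather than resolving them.

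The critical gap is the globalisation. You write that ``using their uniqueness together with a rigidity argument along the fibration $\mathcal{X}\to B$, one expects the $x_b$\ldots to be the fibres of a single horizontal prime divisor.'' There is no such rigidity: a priori the collection $(x_b)_b\in\prod_b X_b(\kappa(b))$ is just a family of fibre-by-fibre points with no reason to patch. The only global constraint tying them together is the section $s$ itself, and extracting a global $K$-rational point from it requires a genuine local-global argument. In the paper this is done by passing to the \'etale abelian section $s^{\ab}\in H^1(G_K,TJ)$, whose restrictions at all closed points of $B$ land in $\widehat{J_c(K_c)}$; to lift back to a global element of $\widehat{J(K)}$ one needs the Tate module of the Shafarevich--Tate group of the relative Jacobian to vanish, and then two further injectivity arguments (using conditions (iii)(a),(b) and Poonen--Voloch) to drop from $\widehat{J(K)}$ to $J(K)$ and finally to $X(K)$. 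Your proposal does not mention $\Sha$ at all, and without this hypothesis the passage from ``$x_b$ exists for each $b$'' to ``$x\in X(K)$ exists'' simply does not follow.

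This omission forces a second gap: $T\Sha(\J)=0$ is not known for arbitrary $\mathcal{X}\to B$. The paper gets around this by first reducing (via an auxiliary neighbourhood of $s$) to the case where the curve is $\emph{isotrivial}$ over $\bar K$ --- descending to $\bar k'$ --- so that the Sa\"idi--Tamagawa finiteness theorem for $\Sha$ of isotrivial abelian varieties applies. This reduction, which you do not make, is the reason the proof is able to run unconditionally given only the number-field case of BSC. Finally, your closing step (``injectivity of the specialisation map on sections, a Hilbert-irreducibility/Chebotarev statement'') is not what the paper does and is not obviously available; instead the paper applies Tamagawa's limit argument, showing $Y(K)\neq\emptyset$ for every neighbourhood $Y\to X$ of $s$ by using the Hilbertian condition (v), and concludes geometricity from compactness (condition (iv)). You should also note that the claim ``the composite $D_{v_b}\hookrightarrow D_{\tilde w_b}\twoheadrightarrow G_{X_b}$ annihilates $I_{v_b}$'' is false in general: the section may be ramified along $X_b$, and handling that possibility (via the non-Tateness of the cyclotomic character, condition (ii), which forces the inertia image into a cuspidal decomposition group) is a nontrivial part of the specialisation analysis.
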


We will refer to this as the \emph{birational section conjecture} or \textbf{BSC}.  One may consider the statement for more general fields $k$, so we establish the following terminology.

\begin{definition}\label{bscholds}
\begin{enumerate}
\item Let $X$ be a smooth, geometrically connected, projective curve over a field $k$.  We say the birational section conjecture (or \textbf{BSC}) \emph{holds for $X$} if every section of $G_X$ is geometric and arises from a unique $k$-rational point $x\in X(k)$.
\item For a field $k$, we say the birational section conjecture (or \textbf{BSC}) \emph{holds over $k$} if the \textbf{BSC} holds for every smooth, geometrically connected, projective curve over $k$.
\end{enumerate}
\end{definition}

\begin{remark}\label{bscuniqueness}
To prove that the \textbf{BSC} holds for $X$ it suffices to prove that every section of $G_X$ arises from a $k$-rational point $x \in X(k)$.  This is necessarily the unique such point, since decomposition subgroups of $G_X$ associated to distinct rank $1$ valuations of $\overline {k(X)}$ intersect trivially \cite[Corollary 12.1.3]{CONF} (in loc. cit. $k$ is a global field but the same argument of proof works over any field). 
\end{remark}

%In \cite{koenigsmann}, Jochen Koenigsmann proved that the birational section conjecture holds over finite extensions of $\Q_p$ and over $\mathbb{R}$.  
It is hoped that the birational section conjecture might be used to prove the Grothendieck anabelian section conjecture for \'etale fundamental groups, via the theory of ``cuspidalisation'' of sections of arithmetic fundamental groups \cite{saidicusp}.  Conversely, the anabelian section conjecture for $\pi_1$ implies the birational section conjecture, as follows easily from the ``limit argument'' of Akio Tamagawa \cite[Proposition 2.8 (iv)]{tamagawa}.  At present the \textbf{BSC} over finitely generated fields over $\Q$, as well as the anabelian section conjecture for $\pi_1$, are quite open. Few examples are known of curves over number fields for which the {\bf BSC} holds. More precisely, with the notations and assumptions in Definition 1.2(i), if $k$ is a number field, $X$ is hyperbolic, $J(k)$ is finite where $J$ is the jacobian of $X$, and the Shafarevich-Tate group of $J$ is finite, then the \textbf{BSC} holds for $X$ (see \cite[Remark 8.9]{stoll} for related examples). Also, some conditional results on the birational section conjecture over number fields of small degree are known (see \cite{hoshi}). Note that a $p$-adic analog of the birational section conjecture holds by \cite{koenigsmann}. To the best of our knowledge no result on the birational section conjecture is known for curves over finitely generated fields over $\Q$ of positive transcendence degree.

\subsection{Statement of the Main Theorems}

In this paper we investigate the birational section conjecture over function fields.  We prove that, for a certain class of fields $k$ of characteristic zero, and under the condition of finiteness of certain Shafarevich-Tate groups, proving that the \textbf{BSC} holds over function fields over $k$ reduces to proving that it holds over finite extensions of $k$.  This class of fields contains, in particular, the finitely generated extensions of $\Q$, and for such fields we show further that the statement is independent of finiteness of the Shafarevich-Tate groups.  The result therefore reduces the \textbf{BSC} over finitely generated extensions of $\Q$ unconditionally to the case of number fields.

Our approach stems from the proof in \cite{saidiSCOFF} of a similar result for the section conjecture for \'etale fundamental groups.  Let us start by describing the aforementioned class of fields, which was introduced in \cite[Definition 0.2]{saidiSCOFF}.

\begin{definition}\label{conditions}
For a field $k'$ of characteristic zero, consider the following conditions on $k'$.
\begin{enumerate}
\item The {\bf BSC} holds over $k'$.
\item For every prime integer $\ell$, the $\ell$-cyclotomic character $\chi_{\ell}:G_{k'}\rightarrow\Z^{\times}_{\ell}$ is non-Tate, meaning that any $G_{k'}$-map $\Z_{\ell}(1)\rightarrow T_{\ell} A$, for some abelian variety $A$ over $k'$ and its $\ell$-adic Tate module $T_{\ell}A$, vanishes.
\item Given an abelian variety $A$ over $k'$, any quotient $A(k')\twoheadrightarrow D$ of the group of $k'$-rational points $A(k')$ satisfies the following:
\begin{enumerate}
\item The natural map $D\rightarrow \widehat D$ is injective, where $\widehat D:=\varprojlim_{N \geq 1} D/ND$.
\item The $N$-torsion subgroup $D[N]$ of $D$ is finite for all $N\geq 1$, and the Tate module $TD$ is trivial (cf. Notation).
\end{enumerate}
\item Given a separated, smooth, connected curve $C$ over $k'$ with function field $K=k'(C)$, $K$ admits the structure of a Hausdorff topological field, so that $X(K)$ is compact for any smooth, geometrically connected, projective, hyperbolic curve $X$ over $K$.
\item Given a separated, smooth, and connected (not necessarily projective) curve $C$ over $k'$ with function field $K=k'(C)$ and a finite morphism $\tilde C\rightarrow C$ with $\tilde C$ separated and smooth, then the following holds.  If $\tilde C_c(k'(c))\ne\emptyset$ for all closed points $c\in C^{\textup{cl}}$, where $k'(c)$ denotes the residue field at $c$ and $\tilde C_c$ is the scheme-theoretic inverse image of $c$ in $\tilde C$, then $\tilde C(K)\ne\emptyset$.
\end{enumerate}
For a field $k$ of characteristic zero, we say that $k$ \emph{strongly satisfies} one of the above conditions (i), (ii), (iii), (iv) and (v) if this condition is satisfied by any finite extension $k'|k$ of $k$.
\end{definition}

Condition (i) is expected to hold for all finitely generated fields over $\Bbb Q$ by the \textbf{BSC}.
Conditions (ii)-(v) are satisfied by finitely generated fields over $\Q$ (cf. \cite{saidiSCOFF}, discussion after Definition 0.2).
More precisely, (ii) follows from the theory of weights, (iii) follows from the Mordell-Weil and Lang-N\'eron Theorems, and (iv) follows (for the discrete topology) from Mordell's conjecture: Faltings' Theorem and N\'eron's specialisation Theorem. Condition (v) is satisfied by Hilbertian fields (cf. \cite[Lemma 4.1.5]{saidiSCOFF}), in particular (v) holds for finitely generated fields.

\begin{definition}\label{sha}
Let $k$ be a field of characteristic zero and $C$ a smooth, separated, connected curve over $k$ with function field $K$.  Let $\mathcal{A}\rightarrow C$ be an abelian scheme with generic fibre $A := \mathcal{A} \times_C K$.  For each closed point $c\in C^{\textup{cl}}$ denote by $K_c$ the completion of $K$ at $c$, and write $A_c := A \times_K K_c$.  
%Let $\ob{K_c}$ be an algebraic closure of $K_c$ and $\ob{K}$ the algebraic closure of $K$ inside $\ob{K_c}$.  
We define the Shafarevich-Tate group
\[\Sha(\mathcal{A}) := \ker(H^1(G_K,A)\rightarrow\prod_{c\in C^{\textup{cl}}}H^1(G_{K_c},A_c))\]
where the product is taken over all the closed points of $C$.
\end{definition}

We now state our first two main Theorems.

\begin{maintheorem}
Let $k$ be a field of characteristic zero that satisfies conditions (iv) and (v) of Definition \ref{conditions},
and strongly satisfies conditions (i), (ii), and (iii) of Definition \ref{conditions}.  Let $C$ be a smooth, separated, connected curve over $k$ with function field $K$.  Let $\X\rightarrow C$ be a flat, proper, smooth relative curve, with generic fibre $X:=\X\times_C K$ which is a geometrically connected hyperbolic curve over $K$ such that $X(K)\ne\emptyset$.  Denoting by $\mathcal{J}:=\textup{Pic}^0_{\X/C}$ the relative Jacobian of $\X$, assume $T\Sha(\mathcal{J})=0$.  Then the {\bf BSC} holds for $X$.
\end{maintheorem}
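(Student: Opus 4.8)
The plan is to follow the general strategy for reduction arguments of this kind: start with a section $s : G_K \to G_X$ and descend it, step by step, from the function field $K = k(C)$ to the base field $k$ (or to finite extensions thereof), exploiting the specialisation of the section at closed points $c \in C^{\mathrm{cl}}$. First I would use the fact that, since $X(K) \ne \emptyset$, the curve $X$ admits a rational point and hence the section theory is non-trivial; fix such a point. Given an arbitrary section $s$ of $G_X$, for each closed point $c \in C^{\mathrm{cl}}$ the smoothness and properness of $\X \to C$ lets me specialise $s$ to a section $s_c$ of $G_{\X_c}$, where $\X_c$ is the fibre over $c$, a smooth projective hyperbolic curve over the residue field $k(c)$, which is a finite extension of $k$. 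Since $k$ \emph{strongly satisfies} condition (i), the \textbf{BSC} holds over $k(c)$, so $s_c$ arises from a unique $k(c)$-rational point $x_c \in \X_c(k(c))$.

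The heart of the argument is then to assemble the family $(x_c)_{c \in C^{\mathrm{cl}}}$ into a single $K$-rational point of $X$ from which $s$ arises. This is where conditions (iii) (strongly), (iv), (v), and the hypothesis $T\Sha(\J) = 0$ enter. The Jacobian $\J = \mathrm{Pic}^0_{\X/C}$ and the Abel--Jacobi embedding (using the given $K$-point as base point) reduce the problem of finding a $K$-point of $X$, together with the cohomological obstruction to lifting the compatible system of local points, to a question about a torsor under $A := \J \times_C K$; the class of this torsor lies in $H^1(G_K, A)$, and its image in $\prod_c H^1(G_{K_c}, A_c)$ vanishes by construction (each $s_c$ is geometric), so the obstruction lives in $\Sha(\J)$. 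The vanishing $T\Sha(\J) = 0$, combined with conditions (ii) and (iii) controlling the divisibility and torsion of Mordell--Weil-type groups, forces the relevant class to die, so the system of local points lifts to a global point of the torsor; condition (v), applied to a suitable finite cover of $C$, then upgrades this to an honest $K$-rational point $x \in X(K)$. Finally, condition (iv) — the compactness of $X(K)$ in a Hausdorff topology on $K$ — is what guarantees that the section $s$ actually \emph{arises} from $x$ rather than merely being compatible with it modulo every finite level: one argues that the image $s(G_K)$ and the decomposition group $D_{\tilde x}$ agree by a limit/compactness argument, exactly as in the topological proofs of Koenigsmann-type statements, and uniqueness of $x$ is then automatic by Remark \ref{bscuniqueness}.

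The main obstacle, as I see it, is the middle step: controlling the global-to-local obstruction and showing that a compatible system of local points of the $\J$-torsor genuinely glues to a global point. The finiteness hypotheses ($T\Sha(\J) = 0$ plus conditions (ii), (iii)) are precisely tailored to kill the relevant $\varprojlim$ and $\varprojlim^1$ terms in the descent spectral sequence, but making this rigorous requires carefully tracking how the section $s$ interacts with the inertia at each $c$ — in particular, showing that the local sections $s_c$ are compatible as $c$ varies and that their associated points $x_c$ are the reductions of a single horizontal section of $\X \to C$. A secondary subtlety is that $\X \to C$ is proper but $C$ itself is not: one must handle the points of $C$ at the boundary (where $\X$ may acquire bad reduction relative to a smooth compactification of $C$) — but since the statement only quantifies over closed points of the given $C$ and demands smoothness of $\X \to C$ over all of $C$, this is built into the hypotheses and should not cause trouble. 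I expect the compactness step (condition (iv)) to be the cleanest, essentially a black box once the glued $K$-point is in hand.
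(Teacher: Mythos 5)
Your outline does match the paper's high-level strategy — specialise the section at closed points $c\in C^{\cl}$, use condition (i) to get local geometric sections $x_c\in\X_c(k(c))$, pass to the abelian/Kummer local-global picture where $T\Sha(\J)=0$ kills the obstruction, and finish with a Tamagawa-style limit argument. But you have glossed over or misattributed the two places where most of the technical work of the proof actually lives.

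First, the specialisation step is not automatic. You write that smoothness and properness of $\X\to C$ ``lets me specialise $s$ to a section $s_c$ of $G_{\X_c}$.'' But a section of $G_X$ is by Lemma~\ref{Gallim} an inverse system of sections $s_U$ of $\pi_1(U_K,\xi)$ over \emph{all} open $U\subset X$, and for a birational Galois section there is no reason a priori that these specialise: one needs each $s_U$ to be \emph{unramified} in the sense that $\Sp_U\circ s_U$ kills the inertia $I_K$. Handling the possibility of ramification is the content of \S3: the specialisation homomorphism $\Sp:\pi_1(X_K-\tilde S)\twoheadrightarrow G_{X_k}$ of Theorem~\ref{galspec} has to be constructed, and the ramified case has to be analysed. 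The key input there is condition (ii) — the non-Tateness of the cyclotomic character — via the weight argument of Proposition~\ref{weightargument} and Lemma~\ref{ramified=cuspidal}, which forces a ramified section to be cuspidal and thereby geometric. Your proposal assigns condition (ii) to ``controlling divisibility and torsion of Mordell--Weil-type groups,'' which is actually the role of condition (iii)(a)/(b); condition (ii) has nothing to do with Mordell--Weil and is used only for ramification control.

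Second, you misplace condition (v). You use it to ``upgrade'' the glued abelian class to a $K$-point of $X$, but that step is done purely by geometry and condition (iii): once $s^{\ab}$ is shown to lie in $J(K)=\J(C)$ (Lemma~\ref{sabinjk}, using Poonen--Voloch and (iii)), the section $C\to\J$ is forced to factor through $\iota:\X\hookrightarrow\J$ because its specialisation to each closed fibre lands in $\X_c(k(c))$ (Lemma~\ref{sabinxk}). Condition (v) enters only in the final limit argument (Proposition~\ref{sisgeometric}): by Tamagawa's argument and condition (iv), it suffices to show that every neighbourhood $Y\to X$ of $s$ has a $K$-rational point. This requires re-running the local specialisation analysis for $Y$ to produce points $\bar y_c\in\Y_c(k(c))$ lying over $\bar x_c$ for all $c$, and then invoking condition (v) on the finite cover $\Y_z\to z(C)$ to conclude $\Y_z(K)\ne\emptyset$. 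This is not ``a black box once the glued $K$-point is in hand'': having a $K$-point of $X$ does not by itself give a $K$-point of the neighbourhood $Y$, and filling this in is precisely what (v) is for. As written, your proposal would stall both at the very first specialisation step and at the final limit step.
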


\begin{maintheorem}
Let $k$, $C$ and $K$ be as in Theorem A, and assume further that $k$ strongly satisfies conditions (iv) and (v) of Definition \ref{conditions}.  For any finite extension $L$ of $K$, let $C^L$ denote the normalisation of $C$ in $L$.  Assume that for any such finite extension $L$ and any flat, proper, smooth relative curve $\Y \to C^L$ we have $T\Sha(\J_{\!\Y}) = 0$, where $\J_{\!\Y} := \Pic^0_{\Y/C^L}$ is the relative Jacobian of $\Y$.  Then the {\bf BSC} holds over all finite extensions of $K$.
\end{maintheorem}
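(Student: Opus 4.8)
The plan is to deduce Theorem B from Theorem A by a bootstrapping argument: Theorem A already establishes the {\bf BSC} for a relative curve over $C$ itself, and we want to upgrade this to the {\bf BSC} over every finite extension $L$ of $K = k(C)$. The key observation is that a finite extension $L|K$ is itself the function field of a smooth curve over $k$, namely (a suitable open of) the normalisation $C^L$ of $C$ in $L$: shrinking $C$ if necessary we may assume $C^L \to C$ is finite étale and $C^L$ is smooth, separated and connected over $k$ with $k(C^L) = L$. Thus any smooth, projective, geometrically connected hyperbolic curve $Y$ over $L$ for which we wish to verify the {\bf BSC} is, after possibly shrinking $C$ once more so that $Y$ spreads out, the generic fibre of a flat, proper, smooth relative curve $\Y \to C^L$. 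We may also arrange $Y(L) \neq \emptyset$ is not assumed; instead, we first need to produce an $L$-rational point on $Y$ from a given section of $G_Y$ before Theorem A can be invoked.

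First I would fix a section $s$ of $G_Y$ and use it to show $Y(L) \neq \emptyset$, so that the hypothesis $X(K) \neq \emptyset$ of Theorem A (with $K$ replaced by $L$ and $X$ by $Y$) is met. This is where conditions (iv) and (v) enter, and the reason Theorem B needs $k$ to \emph{strongly} satisfy them rather than merely satisfy them as in Theorem A: the field $L = k(C^L)$ is the function field of a curve over $k$, and since $k$ strongly satisfies (iv) and (v) — i.e.\ these hold for every finite extension of $k$, hence in particular for the constant subfield of $L$ over which $C^L$ may a priori only be geometrically connected after a finite base change — we get that $L$ carries a Hausdorff topological field structure with $Y(L)$ compact, and that the local-global principle for rational points of (v) is available along $C^L$. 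Running the section $s$ through the local decomposition groups $D_{\tilde y}$ at closed points and applying $p$-adic or Hausdorff compactness (exactly as in the derivation of Theorem A, but now over $L$) produces, via condition (v), an $L$-rational point: for each closed point $c \in (C^L)^{\mathrm{cl}}$ the restriction of $s$ to a decomposition group gives a section over the residue field $L(c)$, which is a number field-like object over which one has enough control, and (v) globalises these local points to a point of $\Y(L)$.

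Once $Y(L) \neq \emptyset$ is established, I would verify that all the hypotheses of Theorem A hold with $(k, C, K, \X, \J)$ replaced by $(k, C^L, L, \Y, \J_{\!\Y})$: the field $k$ satisfies (iv), (v) and strongly satisfies (i), (ii), (iii) — these are conditions on $k$ alone and do not change — the curve $C^L$ is smooth, separated and connected over $k$ with function field $L$, the morphism $\Y \to C^L$ is flat, proper, smooth with hyperbolic geometrically connected generic fibre $Y$, and the vanishing $T\Sha(\J_{\!\Y}) = 0$ is precisely the hypothesis imposed in the statement of Theorem B. Theorem A then yields that the {\bf BSC} holds for $Y$. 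Since $Y$ was an arbitrary smooth, projective, geometrically connected curve over an arbitrary finite extension $L$ of $K$ (the non-hyperbolic cases — genus $0$ and $1$ — being handled separately and classically, as genus $0$ curves with a rational point are $\mathbb{P}^1$ and the genus $1$ case reduces to the elliptic curve situation via conditions (ii), (iii)), this gives the {\bf BSC} over all finite extensions of $K$.

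The main obstacle I anticipate is the first step: extracting an $L$-rational point on $Y$ from the section $s$ in a way that genuinely only uses that $k$ strongly satisfies (iv) and (v), rather than assuming the {\bf BSC} over $L$ (which would be circular). One must be careful that the topological-field and Hilbertian-type arguments apply to $L = k(C^L)$, which requires knowing $C^L$ can be taken geometrically connected over the relevant constant field — this is exactly why "strongly" is needed, and checking that the constant field extension involved is finite over $k$ (so that the strengthened conditions apply) is the delicate point. A secondary subtlety is ensuring all the spreading-out shrinkings of $C$ (and hence $C^L$) can be done compatibly, so that removing finitely many closed points does not affect the validity of the $\Sha$-vanishing hypothesis or the conclusion; this is routine but must be stated, since $\Sha(\J_{\!\Y})$ a priori depends on the model $C^L$.
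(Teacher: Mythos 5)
Your proposal correctly identifies the shape of the reduction — apply Theorem A with $(k, C^L, L, \Y, \J_{\!\Y})$ in place of $(k, C, K, \X, \J)$, and you correctly flag both why \emph{strongly} satisfying (iv), (v) is now needed (the constant subfield of $L$ may be a proper finite extension of $k$) and that the hypothesis $X(K)\ne\emptyset$ of Theorem A is not available for free. But your plan for resolving that last issue is the wrong one, and the key idea of the paper's argument is missing.

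You propose to \emph{first} extract an $L$-rational point on $Y$ from the section $s$ using (iv), (v) and local decomposition groups, and \emph{then} invoke Theorem A. As you yourself anticipate, this is essentially circular: producing a rational point from a section, without the $\widehat{J(K)}$-machinery that itself relies on $X(K)\neq\emptyset$ to identify $\pi_1(X,\xi)^{(\ab)}$-sections with classes in $H^1(G_L,TJ)$ via a base point, is exactly the content of Theorem A, not a preliminary step one can carry out separately. The paper avoids this entirely: after replacing $X$ by a hyperbolic finite cover $Y\to X$ corresponding to an open $H\supset s(G_L)$ (this also dispenses with the non-hyperbolic cases, which you try to treat ``classically'' for genus $0$ and $1$ — a distinct gap, since the BSC for $\mathbb{P}^1_L$ is in no sense classical), one simply takes any finite extension $L'|L$ with $Y(L')\ne\emptyset$ (trivially available from any closed point of $Y$), lets $M$ be the Galois closure of $L'|L$, and applies Theorem A to $Y_M$ over $C^M$ — where the rational point hypothesis holds by construction — to conclude $s_{Y_M}(G_M)\subset D^M_{\tilde y}$ for a unique $M$-rational point $y$. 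The remaining, and essential, step is descent from $M$ to $L$: since $M|L$ is Galois, $s_Y(G_L)$ normalises $s_{Y_M}(G_M)$, hence conjugation by $s_Y(\sigma)$ sends $D^M_{\tilde y}$ to $D^M_{s_Y(\sigma)\cdot\tilde y}$, and the triviality of intersections of distinct decomposition groups (\cite[Corollary 12.1.3]{CONF}) forces $s_Y(\sigma)\cdot\tilde y=\tilde y$, so $s_Y(G_L)$ lies in the normaliser of $D^M_{\tilde y}$, which is $D_{\tilde y}\subset G_Y$. This Galois-descent/normaliser argument is the actual content of the proof of Theorem B and is absent from your proposal.

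Concretely: (a) drop the attempt to produce $Y(L)\ne\emptyset$ directly from $s$; instead pass to a Galois extension $M|L$ where a rational point exists unconditionally; (b) after Theorem A is applied over $M$, descend via the normaliser of the decomposition group; (c) handle all genera uniformly by first choosing a hyperbolic neighbourhood $H\supset s(G_L)$ of the section via Riemann--Hurwitz, rather than treating genus $0$ and $1$ by ad hoc means.
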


In the context of $\Q$, this means that if the birational section conjecture holds over all fields of some fixed transcendence degree over $\Q$ then it holds over all fields of transcendence degree one higher, provided that finiteness of $\Sha$ holds.  By induction this means that, under the assumption on the relevant Shafarevich-Tate groups, if the birational section conjecture holds over number fields then it holds over all finitely generated fields over $\Q$. As a consequence of one of the main results in \cite{saiditamagawa}, asserting the finiteness of $\Sha$ for isotrivial abelian varieties over finitely generated fields, and using Theorem A, we can prove the following.

\begin{maintheorem} Assume that the {\bf BSC} holds over all number fields. Then the {\bf BSC} holds over all finitely generated fields over $\Bbb Q$.
\end{maintheorem}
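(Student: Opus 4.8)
The plan is to deduce Theorem C from Theorem B by induction on the transcendence degree of the finitely generated field over $\Q$, feeding in the finiteness of $\Sha$ for the relevant Jacobians from \cite{saiditamagawa}. First I would set up the induction: let $k$ be a finitely generated field over $\Q$ of transcendence degree $d$, and suppose as induction hypothesis that the {\bf BSC} holds over every finitely generated field over $\Q$ of transcendence degree $\le d-1$ (the base case $d=0$ being exactly the hypothesis that the {\bf BSC} holds over all number fields). Writing $k$ as a finite extension of a purely transcendental extension, I would choose a subfield $k_0 \subseteq k$ finitely generated over $\Q$ of transcendence degree $d-1$ and a smooth, separated, connected curve $C$ over $k_0$ with function field $K_0 = k_0(C)$, so that $k$ is a finite extension of $K_0$. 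By the induction hypothesis the {\bf BSC} holds over $k_0$ and over all its finite extensions (these are again finitely generated of transcendence degree $d-1$), so $k_0$ strongly satisfies condition (i); and as recalled after Definition \ref{conditions}, every finitely generated field over $\Q$ satisfies (and strongly satisfies) conditions (ii)--(v). Thus $k_0$ satisfies all the hypotheses imposed on the field ``$k$'' in Theorems A and B.

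Next I would invoke the finiteness result of \cite{saiditamagawa}: for any finite extension $L$ of $K_0$ and any flat, proper, smooth relative curve $\Y \to C^L$ over the normalisation $C^L$ of $C$ in $L$, the relative Jacobian $\J_{\!\Y} = \Pic^0_{\Y/C^L}$ has generic fibre an abelian variety over $L$, and one needs $T\Sha(\J_{\!\Y}) = 0$. Here I would need to match the hypothesis format: \cite{saiditamagawa} asserts finiteness of $\Sha$ for isotrivial abelian varieties over finitely generated fields, so I must either note that the relevant Jacobians arising in the reduction are isotrivial (after the geometric constructions in the proof of Theorem B they acquire this property, as in \cite{saidiSCOFF}), or more simply observe that $T\Sha(\J_{\!\Y})$ vanishes because $\Sha(\J_{\!\Y})$ is finite — a finite group has trivial Tate module. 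Granting this, the hypotheses of Theorem B are satisfied with $k_0$ in place of $k$ and $C/k_0$ the chosen curve, so the {\bf BSC} holds over all finite extensions of $K_0 = k_0(C)$. Since $k$ is one such finite extension, the {\bf BSC} holds over $k$, and as $k$ was an arbitrary finitely generated field over $\Q$ of transcendence degree $d$, the induction closes.

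The main obstacle I expect is not the induction bookkeeping but the precise interface with \cite{saiditamagawa}: one must ensure that the Shafarevich-Tate groups appearing in Theorem B (which are defined relative to a specific model $\Y \to C^L$) are exactly the ones whose finiteness is guaranteed, i.e.\ that the abelian varieties in question fall within the ``isotrivial'' scope of that paper, or alternatively that the formulation of $\Sha(\mathcal{A})$ in Definition \ref{sha} agrees up to commensurability with the classical $\Sha$ over the function field $L$ so that finiteness transfers. A secondary point requiring care is that the field $k_0$ of transcendence degree $d-1$ and the curve $C$ can be chosen so that $k = K_0$ is literally a \emph{finite} extension of $k_0(C)$ — this is standard (pick a transcendence basis, let $C$ be a curve whose function field is $\Q(t_1,\dots,t_{d-1})(t_d)$ after enlarging constants, noting any finitely generated field of transcendence degree $\ge 1$ is a finite extension of a one-variable function field over a subfield of transcendence degree one less), but it should be stated explicitly. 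Once these identifications are in place the deduction is immediate from Theorem B.
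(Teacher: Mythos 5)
There is a genuine gap in your proposal, and you have actually put your finger on it yourself but then waved it away. You propose to apply Theorem B with $k_0$ (of transcendence degree $d-1$) as the ground field. But Theorem B's hypothesis is a \emph{universal} statement: for \emph{every} finite extension $L$ of $K_0$ and \emph{every} flat, proper, smooth relative curve $\Y \to C^L$, one must have $T\Sha(\J_{\!\Y}) = 0$. The result of \cite{saiditamagawa} gives finiteness of $\Sha$ only for \emph{isotrivial} abelian varieties over finitely generated fields, and a generic $\J_{\!\Y}$ with $\Y\to C^L$ an arbitrary relative curve is certainly not isotrivial. Your first escape hatch (``the relevant Jacobians arising in the reduction are isotrivial, after the geometric constructions in the proof of Theorem B'') is simply false: the proof of Theorem B passes to a hyperbolic cover $Y\to X$ of an arbitrary projective curve $X$, and there is no isotriviality anywhere in that argument. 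Your second escape hatch (``more simply observe that $T\Sha(\J_{\!\Y})$ vanishes because $\Sha(\J_{\!\Y})$ is finite'') is circular — finiteness of $\Sha(\J_{\!\Y})$ is precisely the unverified hypothesis. So the interface with \cite{saiditamagawa} is not a secondary point requiring care; it is the whole difficulty, and applying Theorem B head-on does not resolve it.

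The paper circumvents this by \emph{not} applying Theorem B. Instead it first reduces, via \cite[Lemma 2.1]{saidiBASC}, to proving the \textbf{BSC} for the projective line $\mathbb{P}^1_K$. Then, for a given section $s:G_K\to G_{\mathbb{P}^1_K}$, it constructs a \emph{specific} hyperbolic neighbourhood $Y\to\mathbb{P}^1_K$ of $s$ that is also \emph{isotrivial}: one takes an open $U\subset\mathbb{P}^1_k$ defined over the small field $k$, picks a characteristic open subgroup $\tilde\Delta$ of $\pi_1(U_{\bar k})$ giving a hyperbolic cover of $\mathbb{P}^1_{\bar k}$, and sets $H$ to be the preimage in $G_X$ of $\Delta\cdot s_U(G_K)$; the resulting $Y_{\ob K}$ descends to $\bar k$ by construction. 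For this one chosen $Y$, the Jacobian is isotrivial, so \cite[Theorem 4.1]{saiditamagawa} applies and gives $T\Sha(\J)=0$ for the relative Jacobian of a model $\Y\to C$. Now one applies \emph{Theorem A} (a single-curve statement, not Theorem B's blanket statement) to that $\Y$, after arranging $Y(K)\ne\emptyset$ by a finite base change as in the proof of Theorem B. The reduction to $\mathbb{P}^1$ is essential here: it is what makes the isotriviality of a suitable neighbourhood possible, since the covers of $\mathbb{P}^1_{\bar K}$ ramified only over points coming from $\bar k$ are precisely the isotrivial ones. Without passing to $\mathbb{P}^1$ and without building the isotrivial neighbourhood by hand, the finiteness input from \cite{saiditamagawa} is simply not available, and your induction does not close.
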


Thus Theorem C reduces the proof of the birational section conjecture to the case of number fields.

\subsection{Guide to the proof of the Main Theorems}

Our approach to proving the above Theorems is inspired by the method in \cite{saidiSCOFF}, and relies on a local-global argument.  This requires studying the properties of sections of absolute Galois groups of curves over local fields of equal characteristic zero and over function fields of curves in characteristic zero.  We relate these two settings by investigating ``\'etale abelian sections'', from where arises the constraint on the Shafarevich-Tate groups.  

{\it To the proof of Theorems A and B}. With the notations and assumptions in Theorem A, let $s:G_K \to G_X$ be a section. For each closed point $c\in C^{\cl}$, using the fact that $k$ strongly satisfies condition (ii), as well as a specialisation theorem for absolute Galois groups which may be of interest independently of the context of this paper (cf. \S 3.1), we prove that $s$ naturally induces a birational section $\bar s_c:G_{k(c)}\to G_{\X_c}$ of the absolute Galois group $G_{\X_c}$ of the fibre $\X_c$ at $c$.  The fact that $k$ strongly satisfies condition (i) implies that the section $\bar s_c$ is geometric and arises from a unique rational point $x_c\in \X_c(k(c))$ (the unicity follows from the fact that $k$ strongly satisfies condition (iii)(a)).

Next, we consider the \'etale abelian section $s^{\ab}:G_K\to \pi_1(X)^{(\ab)}$ induced by $s$, where $\pi_1(X)^{(\ab)}$ is the geometrically abelian fundamental group of $X$, and similarly the \'etale abelian sections $\bar s_c^{\ab}:G_{k(c)}\to \pi_1(\X_c)^{(\ab)}$ induced by $\bar s_c$, $\forall c\in C^{\cl}$ (cf. \S 2.2). Thus, $s^{\ab}$ and $\bar s_c^{\ab}$ can be viewed as elements of $H^1(G_K,TJ)$, and $H^1(G_{k(c)},T\J_c)$, where $J$ and $\J_c$ are the jacobians of $X$ and $\X_c$ respectively, and $T$ indicates their Tate modules (cf. loc. cit.).  We have the following commutative diagram of Kummer exact sequences (cf. $\S2.2$)
\[\begin{tikzpicture}
\matrix(m)[matrix of math nodes,
row sep=3em, column sep=2em,
text height=2ex, text depth=0.25ex]
{0 & \widehat{J(K)} & H^1(G_K,TJ) & TH^1(G_K,J) & 0\\
0 & \displaystyle\prod_{c\in C^{\cl}}\widehat{J_c(K_c)} & \displaystyle\prod_{c\in C^{\cl}}H^1(G_{K_c},J_c) & \displaystyle\prod_{c\in C^{\cl}}TH^1(G_{K_c},J_c) & 0\\};
\path[->]
(m-1-1) edge (m-1-2);
\path[->]
(m-1-2) edge (m-1-3) edge (m-2-2);
\path[->]
(m-1-3) edge (m-1-4) edge (m-2-3);
\path[font=\scriptsize,->]
(m-1-4) edge (m-1-5) edge node[left]{} (m-2-4);
\path[->]
(m-2-1) edge (m-2-2);
\path[->]
(m-2-2) edge (m-2-3);
\path[->]
(m-2-3) edge (m-2-4);
\path[->]
(m-2-4) edge (m-2-5);
\end{tikzpicture}\]
where $J_c:=J\times _KK_c$, and $K_c$ is the completion of $K$ at $c$. Moreover, $(s_c^{\ab})_{c\in C^{\cl}}=(x_c)_{c\in C} \in \prod_{c\in C^{\cl}}\widehat{J_c(K_c)}$ via the injective maps $\prod_{c\in C^{\cl}}\X_c(k(c))\to \prod_{c\in C^{\cl}}\widehat{\J_c(k(c))}\to \prod_{c\in C^{\cl}}\widehat{J_c(K_c)}$, where the last map is induced by the inflation map, and the first map is injective as $k$ strongly satisfies condition (iii)(a).  The kernel of the right vertical map $TH^1(G_K,J)\to \prod_{c\in C^{\cl}}TH^1(G_{K_c},J_c)$ is the Tate module of the Shafarevich-Tate group $\Sha(\mathcal{J})$, which is trivial by our assumption that $\Sha(\mathcal{J})$ is finite. Thus $s^{\ab}\in  \widehat{J(K)}$ by the above diagram. We then prove the following 
(cf. Lemma 5.1).

\begin{maintheorem} We use the above notations. Assume that $k$ strongly satisfies the conditions (i), (ii), and (iii) of Definition 1.4.  
Then the homomorphism $J(K)\rightarrow\widehat{J(K)}$ is injective and $s^{\ab}$ is contained in $J(K)$.
\end{maintheorem}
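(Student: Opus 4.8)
The plan is to prove the two assertions separately. The injectivity of $J(K)\to\widehat{J(K)}$ is a statement about the abstract group $J(K)$, which I would extract from the Lang--N\'eron theorem together with condition (iii). Write $(\tau,B)$ for the $K/k$-trace of $J$, so $B$ is an abelian variety over $k$, the subgroup $M:=\tau(B(k))\subseteq J(K)$ is a quotient of $B(k)$, and $F:=J(K)/M$ is finitely generated by Lang--N\'eron. Applying condition (iii), which holds over $k$ since $k$ strongly satisfies (iii), to the quotient $D=M$ of $B(k)$ gives $M\hookrightarrow\widehat M$, finiteness of $M[N]$ for all $N$, and vanishing of the Tate module of $M$; and $F\hookrightarrow\widehat F$ because $F$ is finitely generated. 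If now $g\in\bigcap_{N\ge 1}NJ(K)$, its image in $F$ is divisible, hence zero, so $g\in M$; letting $e$ be the exponent of the finite torsion subgroup of $F$ and writing $g=Ne\cdot h$ with $h\in J(K)$, the image of $eh$ in $F$ is killed by $N$ and torsion, hence zero, so $eh\in M$ and $g\in NM$ for every $N$, whence $g=0$ since $M\hookrightarrow\widehat M$. This proves $J(K)\hookrightarrow\widehat{J(K)}$.

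For $s^{\ab}\in J(K)$, recall from the discussion above that $s^{\ab}$ is already known to lie in $\widehat{J(K)}$, and that for every closed point $c\in C^{\cl}$ its image under the reduction map $\widehat{J(K)}\to\widehat{\J_c(k(c))}$ is the class of the rational point $x_c\in\X_c(k(c))$, which is an honest element of $\J_c(k(c))$ because $k$ strongly satisfies condition (iii)(a) over $k(c)$. I would first promote the exact sequence $0\to M\to J(K)\to F\to 0$ to a short exact sequence $0\to\widehat M\to\widehat{J(K)}\to\widehat F\to 0$: in the Tor exact sequence for $-\otimes\Z/N$ the extra terms are subquotients of the finite groups $M[N]$, $F[N]$ and of the vanishing Tate module of $F$, and a Mittag--Leffler argument removes the $\varprojlim^1$ contributions. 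It then suffices to show that the image of $s^{\ab}$ in $\widehat F=F\otimes_{\Z}\widehat\Z$ lies in $F$ and that the residual class in $\widehat M$ lies in $M$. Here the local constraints take over: the reduction of an element of the constant part $M$ is itself constant, factoring through $B(k)\to B(k(c))\to\J_c(k(c))$, so matching the everywhere-integral family of reductions $(x_c)_c$ against the reductions of elements of $J(K)$ should force the $\widehat F$-component of $s^{\ab}$ into $F$ and then the $\widehat M$-component into $M$.

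The main obstacle is precisely this last local-global step: injectivity of $M\hookrightarrow\widehat M$ does not by itself place an element of $\widehat M$ back inside $M$, so one must genuinely produce a global point of $J$ (equivalently, of $B$) realising the prescribed family of local reductions $(x_c)_{c\in C^{\cl}}$. This is where condition (i) does essential work — it is what guarantees the existence of the points $x_c$ in the first place — alongside the finiteness of $\Sha$ used upstream to land $s^{\ab}$ in $\widehat{J(K)}$; I expect the argument to turn on a careful compatibility between the reduction maps $J(K)\to\J_c(k(c))$, the $K/k$-trace, and an approximation property of the function field $K$.
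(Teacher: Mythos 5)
Your argument for the injectivity of $J(K)\to\widehat{J(K)}$ is correct and takes a genuinely different route from the paper. You decompose $J(K)$ via the $K/k$-trace using Lang--N\'eron, apply condition (iii) to the constant part $M=\tau(B(k))$, use finite generation of $F=J(K)/M$, and conclude by a $\bigcap_{N} N(\cdot)$-chase; that is a clean and self-contained proof of the first assertion. (One should add a sentence of care: the $K/k$-trace requires $k$ to be algebraically closed in $K$, which is not automatic since $C$ is only assumed connected, not geometrically connected; one replaces $k$ by its algebraic closure $k'$ in $K$, a finite extension of $k$, and condition (iii) still holds over $k'$ because $k$ strongly satisfies it.) The paper, by contrast, obtains injectivity as a byproduct of the same diagram chase that also yields $s^{\ab}\in J(K)$, so the two assertions are proved in one stroke rather than separately.

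For the second assertion you correctly diagnose that it is not a formal consequence of injectivity, and your proposed local-global step does not close; there is a genuine gap here. The missing ingredient, used crucially in the paper, is Proposition~2.4 of Poonen--Voloch: there exist \emph{two} closed points $c_1,c_2\in C^{\cl}$ for which the specialisation map $J(K)\to\J_{c_1}(k(c_1))\times\J_{c_2}(k(c_2))$ is already injective. After extending scalars to a finite extension $\ell$ of $k$ containing $k(c_1)$ and $k(c_2)$, this gives an injection $J(K)\hookrightarrow \J_{1,2}(\ell):=(\J_{c_1}\times\J_{c_2})(\ell)$ whose cokernel $H$ is a quotient of the $\ell$-points of an abelian variety over $\ell$. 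Condition (iii) then applies to $H$ (not merely to $M$): one obtains $H\hookrightarrow\widehat H$ with $TH=0$, which makes the completed sequence $0\to\widehat{J(K)}\to\widehat{\J_{1,2}(\ell)}\to\widehat H\to 0$ exact and yields, by a snake-lemma chase, the equality $J(K)=\J_{1,2}(\ell)\cap\widehat{J(K)}$ inside $\widehat{\J_{1,2}(\ell)}$. Since $s^{\ab}\in\widehat{J(K)}$ maps to $(\bar s^{\ab}_{c_1},\bar s^{\ab}_{c_2})\in\J_{1,2}(\ell)$ by Proposition~\ref{sabjkhat}~(i) and Lemma~\ref{sabresinf}, the intersection formula immediately gives $s^{\ab}\in J(K)$. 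The conceptual point you were missing is that the infinite family of local constraints $(x_c)_{c\in C^{\cl}}$ can be replaced by constraints at just two places, after which one never needs to ``produce'' a global point realising all the local reductions, nor to analyse the $\widehat M$- and $\widehat F$-components separately, nor to invoke any approximation property of $K$.
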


Furthermore, the natural map $\prod_{c\in C^{\cl}}\X_c(k(c))\to \prod_{c\in C^{\cl}}\J_c(k(c))$ is injective (this follows from condition (iii)(a)), and inside
$\prod_{c\in C^{\cl}}\J_c(k(c))$ the equality 
$$\prod_{c\in C^{\cl}}\X_c(k(c))\cap J(K)=X(K)$$ 
holds (the map $J(K)\to \prod_{c\in C^{\cl}}\J_c(k(c))$ is injective. See Lemma 5.2 and its proof). 
Thus $s^{\ab}=x\in X(K)$ arises from a unique rational point $x$ since $s_c=x_c\in \X_c(k(c))$ for all $c\in C^{\cl}$.  To finish the proof of Theorem A, using a limit argument due to Tamagawa relying on the fact that $k$ satisfies condition (iv), it suffices to show that every neighbourhood of the section $s$ has a $K$-rational point. This is achieved using the fact that $k$ satisfies condition (v).

The proof of Theorem B follows easily from Theorem A, and standard facts on the behaviour of birational Galois sections with respect to finite base change.

{\it To the proof of Theorem C}. We argue by induction on the transcendence degree and assume that the \textbf{BSC} holds over all finitely generated fields over $\Bbb Q$ with transcendence degree $<n$, where $n\ge 1$ is an integer. To prove that the \textbf{BSC} holds for curves over a finitely generated field $K$ of transcendence degree $n$ we first reduce to the case of the projective line $\Bbb P^1_K$. Given a section $s:G_K\to G_{\Bbb P^1_K}$ we prove that $s$ has a neighbourhood $Y\to \Bbb P^1_K$ with $Y$ hyperbolic and isotrivial. For such a curve it is proven in \cite{saiditamagawa} that $\Sha (\J_Y)$ is finite, where $\J_Y\to C$ is the relative jacobian of a suitable model $\Y\to C$ of $Y$ as in the statement of Theorem A. We can then (after passing to an appropriate finite extension of $K$) apply Theorem A to conclude that the section $s_Y:G_K\to G_Y$ induced by $s$, and a fortiori the section $s$, is geometric.

\subsection{Guide to the paper}

The layout of the paper is as follows.  In \S 2 we will recall some properties of \'etale and geometrically abelian fundamental groups and their sections, which will be necessary for the proofs of our main Theorems.

In \S 3 we work in a local setting.  We consider a flat, proper, smooth relative curve over the spectrum of a complete discrete valuation ring with residue characteristic zero, and explain how to define a specialisation homomorphism of absolute Galois groups (Theorem \ref{galspec}) using the specialisation homomorphism of fundamental groups of affine curves.  
We apply this in \S \ref{ramificationofsections} to study the specialisation of sections, and the phenomenon of ramification of sections.

In \S 4 we return to the global setting of Theorems A and B, and consider curves over function fields of characteristic zero.  In \S \ref{specialisationofsections} we explain how to pass to the local setting and apply the results from \S 3.  In \S \ref{etaleabeliansections} we consider \'etale abelian sections and show how to apply a local-global principle, during which we encounter the issue of finiteness of the Shafarevich-Tate group (Proposition \ref{sabjkhat}).

In \S 5 we use the results of \S 3 and \S 4 to prove Theorems A and B, and apply these, along with one of the main results of \cite{saiditamagawa}, to prove Theorem C.

\subsection*{Notation}

\begin{itemize}
\item For a scheme $X$, we will denote the set of closed points of $X$ by $X^{\cl}$. 
\item Given a ring $R$ and morphisms of schemes $X \to Y$ and $\Spec R \to Y$, we will denote the fibre product $X \times_Y \Spec R$ by $X_R$.
\item For a field $k$ and a given algebraic closure $\bar k$ of $k$, we will denote the absolute Galois group $\Gal(\bar k|k)$ by $G_k$.
\item For an abelian group $A$ and a positive integer $N$, we denote by $A[N]$ the kernel of the homomorphism $N : A \to A$, $a \mapsto N \cdot a$.
\item Given an abelian group $A$, we denote by $\widehat{A}$ the inverse limit $\widehat{A} := \varprojlim_{N \geq 1} A/NA$ and by $TA := \varprojlim_{N \geq 1} A[N]$ the Tate module of $A$.
\item Given an \emph{abelian variety} $B$ over a field $k$, an algebraic closure $\bar k$ of $k$, we will denote by $TB$ the Tate module of the abelian group $B(\bar k)$.
\end{itemize}

\section{Sections of \'etale and geometrically abelian fundamental groups}

\subsection{Sections of \'etale fundamental groups}

Let $k$ be a field of characteristic zero, $X$ a smooth, geometrically connected, projective curve over $k$, and $U\subset X$ an open subset with complement $S := X \setminus U$ (when considering an open $U\subset X$ we assume $U$ non-empty).  
Write $K=k(X)$ for the function field of $X$. Let $z$ be a geometric point of $U$ with image in the generic point. Thus $z$ determines algebraic closures $\ob{K}$, and $\bar k$, of $K$ and $k$ respectively, as well as a geometric point of $U_{\bar k}$ that we denote $\bar z$. The \emph{\'etale fundamental group} $\pi_1(U, z)$ of $U$ fits into an exact sequence:
\[\sexact{\pi_1(U_{\bar k},\bar z)}{}{\pi_1(U,z)}{}{G_k=\Gal(\bar k/k)}\]
We will refer to this as the \emph{fundamental exact sequence} of $U$.  The fundamental group $\pi_1(U_{\bar k},\bar z)$ will be called the \emph{geometric fundamental group} of $U$.

\begin{definition}\label{universalcover}
A \emph{universal pro-\'etale cover} $\tilde U\rightarrow U$ of $U$ is an inverse system of finite \'etale covers $\{V_i\rightarrow U\}_{i\in I}$, corresponding to open subgroups of
$\pi_1(U,z)$, such that for any \'etale cover $V\rightarrow U$, corresponding to an open subgroup of $\pi_1(U,z)$, there is a $U$-morphism $V_i\rightarrow V$ for some $i\in I$.  A (closed) \emph{point} $\tilde x$ of the universal pro-\'etale cover $\tilde U$ is a compatible system of (closed) points $\{x_i\in V_i\}_{i\in I}$.
\end{definition}

Fix a universal pro-\'etale cover $\tilde U \to U$, and let $\{V_i \to U\}_{i\in I}$ be the inverse system of \'etale covers defining it.

\begin{definition}\label{normalisation}
Let $Y_i \to X$ denote the unique extension of $V_i \to U$ to a finite morphism of smooth, connected, projective curves over $k$.  The inverse system of morphisms $\{Y_i \to X\}_{i\in I}$ will be called the \emph{normalisation of $X$ in $\tilde U$}, denoted $\tilde X_{\tilde U} \to X$.  A (closed) \emph{point} $\tilde x$ of $\tilde X_{\tilde U}$ is a compatible system of (closed) points $\{x_i\in Y_i\}_{i\in I}$.
\end{definition}

\begin{definition}\label{decompinertiapi1}
With the above notation, let $x\in X^{\cl}$ be a closed point and $\tilde x$ a point above $x$ in the normalisation $\tilde X_{\tilde U}$ of $X$ in $\tilde U$.  The \emph{decomposition group} $D_{\tilde x}$ of $\tilde x$ is the stabiliser of $\tilde x$ under the action of $\pi_1(U,z)$ (which acts naturally on the points of $\tilde X_{\tilde U}$).  
The \emph{inertia group} $I_{\tilde x}$ is the kernel of the natural projection $D_{\tilde x}\twoheadrightarrow G_{k(x)}= \Aut(\bar k/k(x))$, where $k(x)$ is the residue field at $x$.
\end{definition}

It follows immediately from the definition that decomposition and inertia groups for different choices of $\tilde x$ above $x$ are conjugate, that is, for any $\sigma\in\pi_1(U,z)$ we have $I_{\sigma\tilde x}=\sigma I_{\tilde x}\sigma^{-1}$ and $D_{\sigma\tilde x}=\sigma D_{\tilde x}\sigma^{-1}$.  The inertia group $I_{\tilde x}$ is trivial if $x \in U$, while if $x \in S$ it is isomorphic as a $G_{k(x)}$-module to the Tate twist $\hat\Z(1)$.

Recall that the curve $U$ is \emph{hyperbolic} if the geometric fundamental group $\pi_1(U_{\bar k}, \bar z)$ is non-abelian.  Denoting by $g$ the genus of $X$, this occurs exactly when $2 - 2g - \deg_k S < 0$, where $S$ is regarded as a reduced effective divisor on $X$.  We refer to \cite[Lemma 1.5]{hoshimochizuki} for a proof of the following statement.

\begin{lemma}\label{inertiaproperties}
With the above notation, assume further that $U$ is hyperbolic.  Then any two inertia subgroups $I_{\tilde x}, I_{\tilde x'} \subset \pi_1(U, z)$ corresponding to distinct points $\tilde x\ne\tilde x'$ of $\tilde X_{\tilde U}$ intersect trivially.
\end{lemma}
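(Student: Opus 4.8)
I would prove this by reducing, via base change up the universal tower, to an elementary computation in the abelianisation of a geometric fundamental group having at least three punctures. First dispose of the trivial cases: if either of the points $x$, $x'$ of $X$ lying below $\tilde x$, $\tilde x'$ belongs to $U$ then the corresponding inertia group is trivial. So we may assume $x, x' \in S$; in particular $S \ne \emptyset$, so $U$ is affine, and by the Riemann existence theorem (we are in characteristic zero) the geometric fundamental group $\Pi := \pi_1(U_{\bar k},\bar z)$ is the profinite completion of the fundamental group of a punctured surface, hence free of rank $2g + \deg_k S - 1 \ge 2$ by hyperbolicity. Since the composite $D_{\tilde x} \hookrightarrow \pi_1(U,z) \to G_k$ factors through the inclusion $G_{k(x)} \hookrightarrow G_k$, we have $I_{\tilde x} = D_{\tilde x} \cap \Pi$; hence $I_{\tilde x}$ and $I_{\tilde x'}$ are nontrivial torsion-free procyclic subgroups of $\Pi$, and, under the comparison of \'etale and topological fundamental groups, the inertia subgroups of $\Pi$ attached to the punctures of $U_{\bar k}$ are exactly the $\Pi$-conjugates of the closures of the cyclic subgroups generated by the standard loops $c_1,\dots,c_{\deg_k S}$ around the punctures. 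Moreover, since $\bar k$ is contained in the function field of $\tilde U$, the points $\tilde x \ne \tilde x'$ may equally be viewed as points of a universal pro-\'etale cover of $U_{\bar k}$, with unchanged inertia subgroups.

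The computational heart is the case of a connected affine hyperbolic curve $W$ over $\bar k$ with $n_W \ge 3$ punctures. Here $\pi_1(W)^{\ab} \cong \hat\Z^{\,2g_W + n_W - 1}$, and the images of the peripheral loops $c_1,\dots,c_{n_W}$ are, in suitable coordinates, the $n_W$ vectors $e_1,\dots,e_{n_W-1},-(e_1+\dots+e_{n_W-1})$. A direct coordinate check shows that, precisely because $n_W \ge 3$, any two \emph{distinct} of these vectors $v \ne v'$ satisfy $av = bv'$ in $\hat\Z^{\,2g_W + n_W - 1}$ only when $a = b = 0$. Consequently, if $\tilde y, \tilde y'$ are points of a universal pro-\'etale cover of $W$ lying over two distinct punctures, then $I_{\tilde y} \cap I_{\tilde y'} = 1$: a nontrivial common element would map in $\pi_1(W)^{\ab}$ to a vector of the form $av = bv'$ with $a \ne 0 \ne b$ (using torsion-freeness of $I_{\tilde y} \cong \hat\Z$), which is impossible.

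To finish, reduce the general case to this one. Because $\Pi$ is free and non-abelian, the inertia subgroup at any fixed point of $S$ has infinite index in $\Pi$, so the number of punctures of a connected finite \'etale cover of $U_{\bar k}$ grows without bound as it ascends the tower; and since $\tilde x \ne \tilde x'$ they become distinguished at some finite level. Hence we may choose a connected cover $V \to U_{\bar k}$ in the tower, with corresponding open subgroup $H \le \Pi$, such that $V$ has at least three punctures and the images of $\tilde x$, $\tilde x'$ in the smooth compactification of $V$ are distinct punctures $p \ne p'$. Then $I_{\tilde x} \cap H$ and $I_{\tilde x'} \cap H$ are the inertia subgroups of $\tilde x$, $\tilde x'$ over $V$, attached to $p$ and $p'$, so they intersect trivially by the previous paragraph applied to $W = V$. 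Thus $I_{\tilde x} \cap I_{\tilde x'} \cap H = 1$; as $H$ has finite index in $\Pi$ while $I_{\tilde x} \cap I_{\tilde x'} \subseteq I_{\tilde x} \cong \hat\Z$ is torsion-free, this forces $I_{\tilde x} \cap I_{\tilde x'} = 1$.

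I expect the delicate points to be: the identification of the inertia subgroups of $\Pi$ with the conjugates of the peripheral procyclic subgroups, and the computation of their images under abelianisation --- a bookkeeping exercise with the standard presentation of a punctured surface group, carried out via the Riemann existence theorem; and, in the reduction step, the verification that the number of punctures proliferates up the tower, which is exactly where hyperbolicity is indispensable (indeed the statement fails for non-hyperbolic $U$, e.g.\ $U = \mathbb{G}_m$), together with the matching of $I_{\tilde x} \cap H$ with the inertia subgroup over $V$. The abelianisation computation itself is routine linear algebra over $\hat\Z$.
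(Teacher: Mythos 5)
Your proof is correct, but it takes a genuinely different route from the paper: the paper offers no argument at all and simply cites \cite[Lemma 1.5]{hoshimochizuki}, a result proved by the machinery of combinatorial anabelian geometry. You give a self-contained, elementary proof via an abelianisation computation. The strategy — reduce to a connected finite \'etale cover $V$ of $U_{\bar k}$ with at least three punctures and distinct cusps below $\tilde x$, $\tilde x'$; use Riemann existence and the standard presentation of a punctured surface group to identify $\pi_1(V)^{\ab}$ with $\hat\Z^{2g_V + n_V - 1}$; observe that the images of the peripheral elements, being $e_1,\dots,e_{n_V-1},-(e_1+\cdots+e_{n_V-1})$, generate pairwise trivially-intersecting $\hat\Z$-submodules precisely when $n_V\ge 3$; then pull back the triviality of the intersection from $H=\pi_1(V)$ to $\Pi$ using that $H$ is open — is sound, and the appearance of hyperbolicity (the rank of $\Pi$ being at least $2$) is correctly located as the fact that forces the number of cusps to proliferate up the tower. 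The one step I would tighten is exactly that one: the sentence ``the inertia subgroup has infinite index, so the number of punctures grows without bound'' is true but terse. A clean way to discharge it is to note that the number of cusps of $V_H$ over a fixed cusp $p$ of $U_{\bar k}$ is $|H\backslash\Pi/I_p|$, and since $[\Pi:I_p]=\infty$ one can pick $N$ pairwise distinct cosets $g_0I_p,\dots,g_{N-1}I_p$; as $H$ shrinks along the cofinal tower, $H$ eventually avoids each of the finitely many closed sets $g_iI_pg_j^{-1}$ (none of which contains $1$), so the $g_i$ fall into distinct double cosets. Alternatively, one may simply exhibit the mod-$N$ abelianisation cover, which has at least $N^{2g+n-2}\ge N$ cusps over each $p$. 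The trade-off between the two proofs is typical: your route is shorter and entirely elementary given the Riemann existence theorem, while the Hoshi–Mochizuki reference proves a more flexible statement (including pro-$\ell$ and combinatorial variants) that the paper could, in principle, reuse elsewhere.
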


Let $x \in X(k)$ be a $k$-rational point of $X$, and let $\tilde x$ be a point above $x$ in the normalisation $\tilde X_{\tilde U}$ of $X$ in $\tilde U$.  There is a commutative diagram
of exact sequences
\begin{equation}\label{decompinjectpi1}
\begin{tikzcd}
1 \arrow{r}{} &I_{\tilde x} \arrow{r}{}\arrow[hookrightarrow]{d}{} &D_{\tilde x} \arrow{r}{}\arrow[hookrightarrow]{d}{} &G_{k(x)} \arrow{r}{}\arrow[equals]{d}{} &1\\
1 \arrow{r}{} &\pi_1(U_{\bar k},\bar z) \arrow{r}{} &\pi_1(U,z) \arrow{r}{} &G_{k} \arrow{r}{} &1
\end{tikzcd}
\end{equation}
where the middle vertical map is the natural inclusion, via which a section of the upper exact sequence naturally defines one of the (lower) fundamental exact sequence.  When $x \in U$ the inertia group $I_{\tilde x}$ is trivial, hence there is an isomorphism $D_{\tilde x} \simeq G_{k(x)}=G_k$ (induced by the projection $\pi_1(U,z)\twoheadrightarrow G_k$) and thus $D_{\tilde x}$ gives rise naturally to a section of $\pi_1(U, z)$ in this case.  Action by $\pi_1(U_{\bar k}, \bar z)$ permutes the points $\tilde x$ of $\tilde U$ above $x$, so $x$ in fact induces a conjugacy class of sections of the fundamental exact sequence.

It is well known that the upper exact sequence in diagram (\ref{decompinjectpi1}) also splits when $x$ is contained in the complement $S = X \setminus U$.

\begin{definition}\label{cuspidal}
A section $s:G_k\rightarrow\pi_1(U,z)$ is called \emph{cuspidal} if it factors through $D_{\tilde x}$ for some (necessarily $k$-rational) point $x \in S$ and some $\tilde x\in\tilde X_{\tilde U}$ above $x$.
\end{definition}

Now let $\xi : \Spec\Omega \to X$ be a geometric point with image the generic point of $X$.  The geometric point $\xi$ naturally determines a choice of an algebraic closure $\overline {K}$ 
of $K$ and, for each open subset $U \subset X$, a geometric point $\xi:\Spec\Omega\to U$ with image the generic point of $U$.  The following is well-known. 
%expression for the absolute Galois group $G_X=\Gal (K^{\sep}/K)$ of $X$.

\begin{lemma}\label{Gallim}
With $X$, $K$ and $\xi$ as above, there is a canonical isomorphism
\[G_X:=\Gal (\overline {K}/K)\simeq\varprojlim_{U\subset X\textup{ open}}\pi_1(U,\xi)\]
where the limit is taken over all the open subsets of $X$, partially ordered by inclusion.
\end{lemma}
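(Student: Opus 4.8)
The plan is to realize the claimed isomorphism as the one induced by the canonical maps $G_K \to \pi_1(U,\xi)$, one for each non-empty open $U \subset X$. These arise from the inclusion of the generic point $\eta = \Spec K \hookrightarrow U$ together with the identification $G_K = \Gal(\overline{K}/K) = \pi_1(\eta,\xi)$, the geometric point $\xi$ being chosen so as to lie over the generic point of every $U$. Since these maps are compatible with the transition morphisms $\pi_1(U',\xi) \to \pi_1(U,\xi)$ for $U' \subset U$, they assemble into a continuous homomorphism $G_K \to \varprojlim_{U} \pi_1(U,\xi)$, and the content of the lemma is that this is an isomorphism of profinite groups.

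First I would check that each map $G_K \to \pi_1(U,\xi)$ is surjective. This is the standard fact that for a connected normal scheme $U$ with generic point $\eta$ the map $\pi_1(\eta) \to \pi_1(U)$ is surjective: a connected finite \'etale cover $V \to U$ is itself normal and connected, hence irreducible, so its fibre over $\eta$ is the spectrum of a single field and in particular stays connected. Thus pullback along $\eta \to U$ carries connected covers to connected covers, which is precisely surjectivity on fundamental groups.

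Next, injectivity of the limit map. An element of its kernel lies in $\ker(G_K \to \pi_1(U,\xi))$ for every $U$, i.e.\ it fixes every finite subextension $L$ of $\overline{K}/K$ that corresponds to a finite \'etale cover of some open $U \subset X$. But every finite subextension $L/K$ (automatically separable, since $\operatorname{char} k = 0$) arises this way: letting $Y \to X$ be the normalisation of $X$ in $L$, the morphism $Y \to X$ is finite and, being generically \'etale, is finite \'etale over the complement $U$ of its branch locus, a non-empty open subset of $X$. Hence the intersection of these kernels fixes all of $\overline{K}$ and is trivial.

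Finally, the image of $G_K$ in $\varprojlim_{U} \pi_1(U,\xi)$ surjects onto each factor $\pi_1(U,\xi)$ by the first step, so it is dense; being the continuous image of a compact group it is also closed, hence it is the whole inverse limit. Together with injectivity this makes the map a continuous bijective homomorphism of profinite groups, thus a topological isomorphism. The only non-formal inputs are the two standard geometric facts invoked above --- surjectivity of $\pi_1$ under passage to the generic point of a normal scheme, and the \'etaleness of a finite morphism of smooth curves in characteristic zero away from a finite set --- so I anticipate no real obstacle; the only point requiring care is the compatibility of all the maps with the single chosen geometric point $\xi$, which is exactly what makes the resulting isomorphism canonical.
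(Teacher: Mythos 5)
The paper states Lemma \ref{Gallim} as ``well-known'' and supplies no proof, so there is no argument to compare against; your proof is a correct and complete rendering of the standard argument. All three ingredients are in place: surjectivity of $G_K=\pi_1(\eta,\xi)\to\pi_1(U,\xi)$ because a connected finite \'etale cover of a normal connected scheme is integral and hence has connected generic fibre; injectivity of the map into the limit because in characteristic zero every finite subextension $L/K$ of $\overline K$ is realised by a finite \'etale cover of some non-empty open $U$ (normalise $X$ in $L$ and delete the finitely many branch points); and surjectivity onto the limit by the density-plus-compactness argument for profinite groups. One remark you could make explicit to be fully rigorous: the open subsets of $X$ ordered by reverse inclusion form a directed (cofiltered) system precisely because $X$ is irreducible, so the inverse limit is taken over a genuine directed poset; and the surjectivity of a continuous homomorphism of profinite groups onto an inverse limit follows from surjectivity onto each term by the usual fact that a closed subgroup of $\varprojlim G_i$ projecting onto every $G_i$ must be the whole limit (compactness and nonemptiness of inverse limits of finite nonempty sets). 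These are standard, and you gesture at them correctly, so the proposal is sound.
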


In particular, for any open subset $U \subset X$, the fundamental group $\pi_1(U, \xi)$ is naturally a quotient of $G_X$.  In fact we have a commutative diagram
\[\begin{tikzcd}
1 \arrow{r}{} &G_{X_{\bar k}} \arrow{r}{}\arrow[twoheadrightarrow]{d}{} &G_X \arrow{r}{}\arrow[twoheadrightarrow]{d}{} &G_k \arrow{r}{}\arrow[equals]{d}{} &1\\
1 \arrow{r}{} &\pi_1(U_{\bar k},\bar\xi) \arrow{r}{} &\pi_1(U,\xi) \arrow{r}{} &G_k \arrow{r}{} &1
\end{tikzcd}\]
where the middle vertical map is the natural projection, via which a section $s : G_k \to G_X$ of $G_X$ naturally induces a section $s_U : G_k \to \pi_1(U, \xi)$
of the projection $\pi_1(U, \xi)\twoheadrightarrow G_k$.  
Thus, by Lemma \ref{Gallim}, a section $s$ of $G_X$ determines, and is determined by, a compatible system of sections $s_U : G_k \to \pi_1(U, \xi)$ for the open subsets $U \subset X$.

\subsection{Geometrically abelian fundamental groups}

Let $k$ be a field of characteristic zero, and let $X$ be a smooth, geometrically connected, projective curve over $k$ such that $X(k) \ne \emptyset$.  Let $z:\Spec\Omega\rightarrow X$ be a geometric point with value in the generic point, which determines an algebraic closure $\bar k$ of $k$ and a geometric point $\bar z$ of $X_{\bar k}$.  Denote by $\pi_1(X_{\bar k}, \bar z)^{\ab}$ the maximal abelian quotient of the geometric fundamental group of $X$.  The \emph{geometrically abelian quotient} of $\pi_1(X, z)$, denoted $\pi_1(X, z)^{(\ab)}$, is defined by the pushout diagram
\[\begin{tikzcd}
1 \arrow{r}{} &\pi_1(X_{\bar k},\bar z) \arrow{r}{}\arrow[twoheadrightarrow]{d}{} &\pi_1(X,z) \arrow{r}{}\arrow[twoheadrightarrow]{d}{} &G_k \arrow{r}{}\arrow[equals]{d}{} &1\\
1 \arrow{r}{} &\pi_1(X_{\bar k},\bar z)^{\ab} \arrow{r}{} &\pi_1(X,z)^{(\ab)} \arrow{r}{} &G_k \arrow{r}{} &1
\end{tikzcd}\]
where the upper row is the fundamental exact sequence.  By commutativity of this diagram, a section $s : G_k \to \pi_1(X,z)$ of the \'etale fundamental group of $X$ naturally induces a section $s^{\ab} : G_k \to \pi_1(X, z)^{(\ab)}$ of the geometrically abelian quotient, which we will call the \emph{\'etale abelian section} induced by $s$.

Fix a $k$-rational point $x_0\in X(k)$.  Let $J$ denote the Jacobian of $X$, and let $\iota:X\rightarrow J$ be the closed immersion mapping $x_0$ to the zero section of $J$.  Note that $\iota$ maps an arbitrary $k$-rational point $x \in X(k)$ to the class of the degree zero divisor $[x]-[x_0]$.  Moreover, it induces a commutative diagram of exact sequences
\begin{equation}\label{pi1xpi1j}
\begin{tikzpicture}[descr/.style={fill=white}, baseline=(current  bounding  box.center)]
\matrix(m)[matrix of math nodes,
row sep=3em, column sep=2.5em,
text height=1.5ex, text depth=0.25ex]
{1 & \pi_1(X_{\bar k},\bar z)^{\ab} & \pi_1(X,z)^{(\ab)} & G_k & 1\\
1 & \pi_1(J_{\bar k},\bar z) & \pi_1(J,z) & G_k & 1\\};
\path[->]
(m-1-1) edge (m-1-2);
\path[->]
(m-1-2) edge (m-1-3);
\path[->,font=\scriptsize]
(m-1-2) edge node[below,rotate=90]{$\sim$} (m-2-2);
\path[->]
(m-1-3) edge (m-1-4);
\path[->,font=\scriptsize]
(m-1-3) edge node[below,rotate=90]{$\sim$} (m-2-3);
\path[->]
(m-1-4) edge (m-1-5);
\path[-]
(m-1-4) edge[double distance=2pt] (m-2-4);
\path[->]
(m-2-1) edge (m-2-2);
\path[->]
(m-2-2) edge (m-2-3);
\path[->]
(m-2-3) edge (m-2-4);
\path[->]
(m-2-4) edge (m-2-5);
\end{tikzpicture}
\end{equation}
where the vertical maps are isomorphisms.  

Hence there is an identification of $G_k$-modules $\pi_1(X_{\bar k},\bar z)^{\ab} \simeq \pi_1(J_{\ob{k}}, \bar z) \simeq TJ$, where $TJ$ is the Tate module of $J$ (see Notation).  We fix a base point of the torsor of splittings of the upper exact sequence in (\ref{pi1xpi1j}) to be the splitting arising from the rational point $x_0$, and the corresponding base point of the torsor of splittings of the lower exact sequence in (\ref{pi1xpi1j}), and identify the set of $\pi_1(X_{\bar k},\bar z)^{\ab}$-conjugacy classes of sections of the upper exact sequence in (\ref{pi1xpi1j}) with the Galois cohomology group $H^1(G_k, TJ)$. By functoriality of the fundamental group, any $k$-rational point $x\in X(k)$ induces a section $s_x:G_k\to\pi_1(X,z)$, which in turn induces an \'etale abelian section $s_x^{\ab}:G_k\rightarrow\pi_1(X,z)^{(\ab)}$.  Thus we have a map $X(k)\to H^1(G_k,TJ)$ defined by $x\mapsto [s_x^{\ab}]$, where $[s_x^{\ab}]$ denotes the $\pi_1(X_{\bar k}, \bar z)^{\ab}$-conjugacy class of sections containing $s_x^{\ab}$.  This map factors through $J(k)$, that is, it coincides with the composite map
\[\begin{tikzcd}[column sep=small]X(k)\arrow[hookrightarrow]{r}{\iota} &J(k) \arrow{r}{} &H^1(G_k,TJ).\end{tikzcd}\]
This is due to the isomorphism $\pi_1(X,z)^{(\ab)}\simeq\pi_1(J,z)$, via which $s_x^{\ab}$ corresponds to the section of $\pi_1(J,z)$ induced by functoriality from the $k$-rational point $\iota(x)\in J(k)$.

\begin{lemma}\label{kummerexactseq}
There is an exact sequence
\[\sexactab{\widehat{J(k)}}{}{H^1(G_k,TJ)}{}{TH^1(G_k,J)}\]
where $\widehat{J(k)}:=\varprojlim_N J(k)/NJ(k)$ (see Notation) and $TH^1(G_k,J)$ is the Tate module of the Galois cohomology group $H^1(G_k,J)$.
\end{lemma}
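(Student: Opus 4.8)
The plan is to realise the asserted sequence as the inverse limit, over the positive integers ordered by divisibility, of the classical Kummer sequences attached to the multiplication-by-$N$ isogenies of $J$. For each $N \geq 1$, the short exact sequence of $G_k$-modules $0 \to J[N] \to J(\bar k) \xrightarrow{N} J(\bar k) \to 0$ yields, on taking $G_k$-cohomology, the Kummer short exact sequence
\[0 \longrightarrow J(k)/NJ(k) \longrightarrow H^1(G_k, J[N]) \longrightarrow H^1(G_k, J)[N] \longrightarrow 0 .\]
The first step is to assemble these into a short exact sequence of inverse systems. For $M \mid N$, writing $N = Md$, the comparison of the two isogenies is the morphism of short exact sequences whose vertical arrows are $(\times d,\ \times d,\ \mathrm{id})$ from the $N$-row $0 \to J[N] \to J(\bar k) \xrightarrow{N} J(\bar k) \to 0$ to the $M$-row; a direct diagram chase shows it is well defined. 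On passing to cohomology it induces the canonical projections $J(k)/NJ(k) \twoheadrightarrow J(k)/MJ(k)$ on the left-hand terms and the multiplication-by-$d$ maps $H^1(G_k, J)[N] \to H^1(G_k, J)[M]$ on the right-hand terms, while the transition maps $J[N] \xrightarrow{\times d} J[M]$ on the coefficient modules are precisely the standard ones exhibiting $TJ = \varprojlim_N J[N]$.

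Granting this, one has $\varprojlim_N J(k)/NJ(k) = \widehat{J(k)}$ and $\varprojlim_N H^1(G_k, J)[N] = TH^1(G_k, J)$ by the very definitions in the Notation. Taking inverse limits of the short exact sequence of inverse systems, left-exactness of $\varprojlim$ is automatic, and right-exactness follows once one knows $\varprojlim^1_N\, J(k)/NJ(k) = 0$; this holds because the transition maps of that system are the surjective canonical projections, so the system satisfies the Mittag--Leffler condition. This produces the short exact sequence
\[0 \longrightarrow \widehat{J(k)} \longrightarrow \varprojlim_N H^1(G_k, J[N]) \longrightarrow TH^1(G_k, J) \longrightarrow 0 ,\]
and it remains to identify the middle term with $H^1(G_k, TJ)$.

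The one point that needs care is this last identification, i.e.\ interchanging $H^1(G_k,-)$ with the inverse limit $TJ = \varprojlim_N J[N]$. Because each $J[N]$ is a \emph{finite} $G_k$-module and the transition maps are surjective, continuous cohomology commutes with the inverse limit: the only potential obstruction is the term $\varprojlim^1_N H^0(G_k, J[N]) = \varprojlim^1_N\, J(k)[N]$ in the corresponding $\varprojlim$--$\varprojlim^1$ exact sequence for continuous cohomology, and it vanishes since each $J(k)[N]$ is finite (being a subgroup of $J[N](\bar k) \cong (\Z/N)^{2g}$). Combining this identification with the displayed short exact sequence gives the lemma. Apart from this, the argument is the standard Kummer-sequence formalism together with the bookkeeping of transition maps, which I expect to be routine.
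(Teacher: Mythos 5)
Your argument is correct, and it is exactly the route the paper indicates: the paper's own "proof" is a one-line remark that the lemma can be derived from the Kummer sequences $0 \to J[N] \to J(\bar k) \xrightarrow{N} J(\bar k) \to 0$, and your write-up supplies precisely the details (the choice of transition morphisms $(\times d, \times d, \mathrm{id})$, the Mittag--Leffler vanishing of $\varprojlim^1$ on the left and on the $H^0$-level, and the resulting identification $H^1(G_k, TJ) \simeq \varprojlim_N H^1(G_k, J[N])$) that the authors leave to the reader.
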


We shall refer to this sequence as the \emph{Kummer exact sequence}.  One can easily derive it from the Kummer exact sequences $\begin{tikzcd}[column sep=small]0 \arrow{r}{} &J(\bar k)[N] \arrow{r}{} &J(\bar k) \arrow{r}{N} &J(\bar k) \arrow{r}{} &0,\end{tikzcd}$ $N \geq 0$, during which one sees, in particular, that the map $J(k)\to H^1(G_k,TJ)$ factors through $\widehat{J(k)}$.  Thus we have a sequence of maps:
\begin{equation}\label{sabptth}
\begin{tikzcd}[column sep=small]X(k) \arrow[hookrightarrow]{r}{\iota} &J(k) \arrow{r}{} &\widehat{J(k)} \arrow[hookrightarrow]{r}{} &H^1(G_k,TJ).\end{tikzcd}
\end{equation}

\section{Specialisation of sections in a local setting}

\subsection{A specialisation homomorphism for absolute Galois groups}\label{spechom}

Let $R$ be a complete discrete valuation ring with uniformiser $\pi$, field of fractions $K$ and residue field $k:=R/\pi R$ of characteristic zero. Let $X$ be a flat, proper, smooth, geometrically connected \emph{relative curve} over $\Spec R$, and denote by $X_K:=X\times _{\Spec R}\Spec K$ its generic fibre and $X_k:=X\times _{\Spec R}\Spec k$ its special fibre.  Fix an algebraic closure $\ob{K}$ of $K$, and denote by $\ob{R}$ the integral closure of $R$ in $\ob{K}$, and by $\bar k$ the residue field of $\ob{R}$, which is an algebraic closure of $k$.

Let $\bar\xi_1:\Spec\Omega_1\rightarrow X_{\ob{K}}$ be a geometric point with image the generic point of $X_{\ob{K}}$, and similarly let $\bar\xi_2:\Spec\Omega_2\rightarrow X_{\bar k}$ be a geometric point with image the generic point of $X_{\bar k}$.  These induce geometric points of $X_K$ and $X_k$, which we denote by $\xi_1$ and $\xi_2$ respectively.

\begin{definition}\label{stilde}
For each closed point $x$ of $X_k^{\cl}$, fix a choice of closed point $y\in X_K^{\cl}$ which specialises to $x$ and whose residue field is the unique unramified extension of $K$ whose valuation ring has residue field $k(x)$ (such a point exists since $X \to \Spec R$ is smooth).  We define $\tilde S$ to be the set of these chosen closed points $y\in X_K^{\cl}$.  Thus, $\tilde S$ is a subset of $X^{\cl}_K$ in bijection with $X_k^{\cl}$.
We denote by $\tilde S_{\ob{K}}$ the inverse image of $\tilde S$ via the map $X_{\ob K}^{\cl}\to X^{\cl}$.
Thus, $\tilde S_{\ob{K}}$ is a subset of $X_{\ob{K}}^{\cl}$ in bijection with $X_{\bar k}^{\cl}$.
\end{definition}

\begin{definition}\label{pi1stilde}
With $\tilde S$ and $\tilde S_{\ob{K}}$ as in Definition \ref{stilde}, we define the group $\pi_1(X_K - \tilde S)$ to be the inverse limit
\[\pi_1(X_K - \tilde S) := \varprojlim_{B\subset\tilde S\textup{ finite}}\pi_1(X_K \setminus B, \xi_1)\]
over the open subsets of $X_K$ whose complements are finite subsets of $\tilde S$, ordered by inclusion.  Similarly, we define $\pi_1(X_{\ob{K}} - \tilde S_{\ob{K}}) := \varprojlim_{B\subset\tilde S_{\ob{K}}\textup{ finite}}\pi_1(X_{\ob{K}} \setminus B, \bar\xi_1)$.
\end{definition}

\begin{definition}\label{universalcovertilde}
A \emph{universal pro-\'etale cover} $\tilde X_{\tilde S} \to X_K - \tilde S$ is an inverse system of finite morphisms $\{Y_i\to X_K\}_{i\in I}$, with $Y_i$ smooth, corresponding to open subgroups of 
$\pi_1(X_K - \tilde S)$, such that for any finite morphism $Y \to X_K$, with $Y$ smooth, corresponding to an open subgroup of $\pi_1(X_K - \tilde S)$,
 there is an $X_K$-morphism $Y_i\to Y$ for some $i\in I$.
A (closed) \emph{point} $\tilde x$ of $\tilde X_{\tilde S}$ is a compatible system of (closed) points $\{y_i \in Y_i\}_{i\in I}$.
\end{definition}

For a universal pro-\'etale cover $\tilde X_{\tilde S} \to X_K - \tilde S$ and any closed point $\tilde x$ of $\tilde X_{\tilde S}$, we define decomposition and inertia subgroups $D_{\tilde x}, I_{\tilde x} \subset \pi_1(X_K - \tilde S)$ exactly as in Definition \ref{decompinertiapi1}, and they satisfy analogous properties.

\begin{theorem}\label{galspec}
There exists a surjective (continuous) homomorphism $\Sp : \pi_1(X_K-\tilde S) \twoheadrightarrow G_{X_k}$, an isomorphism $\ob{\Sp} : \pi_1(X_{\ob{K}} - \tilde S_{\ob{K}}) \simeq G_{X_{\bar k}}$, and a commutative diagram of exact sequences:
\begin{equation}\label{specialisationgalois}
\begin{tikzpicture}[descr/.style={fill=white}, baseline=(current  bounding  box.center)]
\matrix(m)[matrix of math nodes,
row sep=3em, column sep=2.5em,
text height=1.5ex, text depth=0.25ex]
{1 & G_{X_{\ob{K}}} & G_{X_K} & G_K & 1\\
1 & \pi_1(X_{\ob{K}}-\tilde S_{\ob{K}}) & \pi_1(X_K-\tilde S) & G_K & 1\\
1 & G_{X_{\bar k}} & G_{X_k} & G_k & 1\\};
\path[->,font=\scriptsize]
(m-1-1) edge node[above]{} (m-1-2);
\path[->,font=\scriptsize]
(m-1-2) edge node[above]{} (m-1-3);
\path[->>,font=\scriptsize]
(m-1-2) edge node[left]{} (m-2-2);
\path[->,font=\scriptsize]
(m-1-3) edge node[above]{} (m-1-4);
\path[->>,font=\scriptsize]
(m-1-3) edge node[left]{} (m-2-3);
\path[->,font=\scriptsize]
(m-1-4) edge node[above]{} (m-1-5);
\path[-,font=\scriptsize]
(m-1-4) edge[double distance=2pt] node[left]{} (m-2-4);
\path[->,font=\scriptsize]
(m-2-1) edge node[above]{} (m-2-2);
\path[->,font=\scriptsize]
(m-2-2) edge node[above]{} (m-2-3);
\path[->>,font=\scriptsize]
(m-2-2) edge node[left]{$\ob{\Sp}$} (m-3-2);
\path[->,font=\scriptsize]
(m-2-3) edge node[above]{} (m-2-4);
\path[->>,font=\scriptsize]
(m-2-3) edge node[left]{$\Sp$} (m-3-3);
\path[->,font=\scriptsize]
(m-2-4) edge node[above]{} (m-2-5);
\path[->>,font=\scriptsize]
(m-2-4) edge node[left]{} (m-3-4);
\path[->,font=\scriptsize]
(m-3-1) edge node[above]{} (m-3-2);
\path[->,font=\scriptsize]
(m-3-2) edge node[above]{} (m-3-3);
\path[->,font=\scriptsize]
(m-3-3) edge node[above]{} (m-3-4);
\path[->,font=\scriptsize]
(m-3-4) edge node[above]{} (m-3-5);
\end{tikzpicture}
\end{equation}
The homomorphism $\Sp$, resp. $\ob{\Sp}$, is defined only up to conjugation.
\end{theorem}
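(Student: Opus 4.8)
The plan is to build $\ob{\Sp}$ and $\Sp$ as inverse limits, over the finite subsets $B\subset\tilde S$, of Grothendieck's specialisation homomorphisms (SGA~1, Exp.~XIII) for the smooth affine relative curves obtained from $X$ by deleting the horizontal divisors cut out by such $B$, and then to read off surjectivity, the isomorphism property, and the commutative diagram from the corresponding finite-level statements.

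First I would fix a finite $B\subset\tilde S$ and write $T\subset X_k^{\cl}$ for the finite set corresponding to $B$ under the bijection $\tilde S\cong X_k^{\cl}$ of Definition \ref{stilde}. Since every $y\in\tilde S$ has residue field unramified over $K$, its schematic closure in $X$ is finite étale over $\Spec R$; hence the closure $\ob B$ of $B$ in $X$ is a finite étale $R$-divisor, and $\mathcal U_B:=X\setminus\ob B$ is a smooth, separated relative curve over $\Spec R$ with geometrically connected fibres, generic fibre $X_K\setminus B$ and special fibre $X_k\setminus T$. Applying the specialisation theory of SGA~1 to $\mathcal U_B\to\Spec R$, with a specialisation path joining the base points determined by $\bar\xi_1$ and $\bar\xi_2$, produces a homomorphism $\mathrm{sp}_B\colon\pi_1(X_K\setminus B,\xi_1)\to\pi_1(X_k\setminus T,\xi_2)$, well defined up to conjugation (the path is not canonical), which fits into a ladder of the two fundamental exact sequences inducing on the base groups the natural reduction surjection $G_K\twoheadrightarrow G_k$ attached to the complete discrete valuation ring $R$, and on the geometric fundamental groups a homomorphism $\ob{\mathrm{sp}}_B\colon\pi_1(X_{\ob K}\setminus B)\to\pi_1(X_{\bar k}\setminus T)$. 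The crucial local input is that $\ob{\mathrm{sp}}_B$ is an \emph{isomorphism}: it is surjective by SGA~1, and, since the residue characteristic is zero, there is no prime-to-$p$ restriction, so its source and target are abstractly isomorphic topologically finitely generated profinite groups (the two fibres having the same genus and the same number of deleted points); as a finitely generated profinite group is Hopfian, a surjection between two such groups is an isomorphism. The short five lemma applied to the ladder then shows that $\mathrm{sp}_B$ is surjective.

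Next I would check compatibility as $B$ grows: for $B\subset B'$, functoriality of the specialisation homomorphism with respect to the open immersions $X_K\setminus B'\hookrightarrow X_K\setminus B$ and $X_k\setminus T'\hookrightarrow X_k\setminus T$ yields commuting transition squares (and likewise for $\ob{\mathrm{sp}}$). Passing to the limit over the finite subsets of $\tilde S$ — which computes $\pi_1(X_K-\tilde S)$ and $\pi_1(X_{\ob K}-\tilde S_{\ob K})$ by Definition \ref{pi1stilde} on one side, and $G_{X_k}$ and $G_{X_{\bar k}}$ by Lemma \ref{Gallim} on the other, since the opens $X_k\setminus T$ (resp.\ $X_{\bar k}\setminus T$) are cofinal among all opens — defines
\[
\Sp:=\varprojlim_{B}\mathrm{sp}_B\colon\pi_1(X_K-\tilde S)\to G_{X_k},\qquad
\ob{\Sp}:=\varprojlim_{B}\ob{\mathrm{sp}}_B\colon\pi_1(X_{\ob K}-\tilde S_{\ob K})\to G_{X_{\bar k}}.
\]
Then $\ob{\Sp}$ is an inverse limit of isomorphisms, hence an isomorphism, while $\Sp$ is surjective, each $\mathrm{sp}_B$ being surjective and the transition maps in both towers being surjective, so that surjectivity survives the limit by the usual compactness argument for surjective inverse systems of profinite groups. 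The middle row of diagram \eqref{specialisationgalois} is obtained by passing the fundamental exact sequences of the $X_K\setminus B$ to the limit, exactness being preserved because the geometric groups form a surjective (hence Mittag--Leffler) system; the vertical maps from the first to the second row are the canonical surjections furnished by Lemma \ref{Gallim}; and commutativity of every square is immediate, at finite level from the SGA~1 ladder together with the compatibility of these canonical surjections with restriction to fibres, and in the limit by continuity. The remaining indeterminacy of $\Sp$ and $\ob{\Sp}$ up to conjugation is precisely the dependence on the chosen specialisation paths.

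I expect the main obstacle to be the local assertion that $\ob{\mathrm{sp}}_B$ is an isomorphism, i.e.\ that the geometric fundamental group of a smooth curve is unchanged under equal-characteristic-zero specialisation with good reduction. Beyond the surjectivity furnished by SGA~1, this rests on the Hopfian property invoked above, or equivalently on the concrete statement that every connected finite étale cover of $X_k\setminus T$ is the restriction to the special fibre of a connected finite étale cover of $\mathcal U_B$: one normalises $X_k$ in such a cover — the ramification over $T$ being automatically tame since the characteristic is zero — and deforms the tamely ramified cover over the complete base $R$ (using Grothendieck's algebraization theorem for the proper family $X\to\Spec R$), before restricting to $\mathcal U_B$. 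A secondary, harmless but delicate, point is the bookkeeping of base points and specialisation paths, which is exactly why $\Sp$ and $\ob{\Sp}$ can only be pinned down up to conjugation.
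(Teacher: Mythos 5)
Your proposal is correct and follows essentially the same route as the paper: reduce to the finite-level specialisation theory of SGA~1 for the smooth affine $R$-curves $X\setminus\overline{B}$ with $B\subset\tilde S$ finite, obtain the finite-level surjection $\Sp_U$ and isomorphism $\ob{\Sp}_U$ (the paper's Lemma \ref{specialisationaffine}, cited from [OV98]), and then pass to the inverse limit over finite $B\subset\tilde S$, invoking Lemma \ref{Gallim} and Definition \ref{pi1stilde} to identify the limits and preserve exactness and surjectivity. The only difference is that you sketch a proof of the finite-level fact that $\ob{\mathrm{sp}}_B$ is an isomorphism (via the Hopfian property of topologically finitely generated profinite groups, or by lifting tame covers), whereas the paper simply delegates this to the cited reference.
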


The homomorphisms $\ob{\Sp}$ and $\Sp$ will be referred to as \emph{specialisation homomorphisms}.  For the proof we need the following well-known result.

\begin{lemma}\label{specialisationaffine}
Let $S$ be a divisor on $X$ which is finite \'etale over $R$, and denote $U := X \setminus S$ which is an open sub-scheme of $X$.  Then there exists a surjective (continuous) homomorphism $\Sp_U : \pi_1(U_K, \xi_1) \twoheadrightarrow \pi_1(U_k, \xi_2)$ and an isomorphism $\ob{\Sp}_U : \pi_1(U_{\ob{K}}, \bar\xi_1) \simeq \pi_1(U_{\bar k}, \bar\xi_1)$ making the following diagram commutative.
\begin{equation}\label{specialisationaffine2}
\begin{tikzpicture}[descr/.style={fill=white}, baseline=(current bounding box.center)]
\matrix(m)[matrix of math nodes,
row sep=3em, column sep=2.5em,
text height=1.5ex, text depth=0.25ex]
{1 & \pi_1(U_{\ob{K}},\bar \xi_1) & \pi_1(U_K,\xi_1) & G_K & 1\\
1 & \pi_1(U_{\bar k},\bar \xi_2) & \pi_1(U_k,\xi_2) & G_k & 1\\};
\path[->,font=\scriptsize]
(m-1-1) edge node[above]{} (m-1-2);
\path[->,font=\scriptsize]
(m-1-2) edge node[above]{} (m-1-3);
\path[->,font=\scriptsize]
(m-1-2) edge node[left]{$\ob{\Sp}_U$} node[below,rotate=90]{$\sim$} (m-2-2);
\path[->,font=\scriptsize]
(m-1-3) edge node[above]{} (m-1-4);
\path[->>,font=\scriptsize]
(m-1-3) edge node[left]{$\Sp_U$} (m-2-3);
\path[->,font=\scriptsize]
(m-1-4) edge node[above]{} (m-1-5);
\path[->>,font=\scriptsize]
(m-1-4) edge node[left]{} (m-2-4);
\path[->,font=\scriptsize]
(m-2-1) edge node[above]{} (m-2-2);
\path[->,font=\scriptsize]
(m-2-2) edge node[above]{} (m-2-3);
\path[->,font=\scriptsize]
(m-2-3) edge node[above]{} (m-2-4);
\path[->,font=\scriptsize]
(m-2-4) edge node[above]{} (m-2-5);
\end{tikzpicture}
\end{equation}
The homomorphism $\ob{\Sp}_U$, respectively $\Sp_U$, is defined only up to inner automorphism of $\pi_1(U_{\ob{k}},\bar\xi_2)$, resp. $\pi_1(U,\xi_2)$.
\end{lemma}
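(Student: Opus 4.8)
The plan is to obtain the Lemma as an instance of Grothendieck's theory of specialisation of the fundamental group (SGA~1, Expos\'es~X and XIII), exploiting throughout that $k$ has characteristic zero: this forces every ramification along the horizontal boundary $S$ to be tame, and in fact makes every finite \'etale cover of a fibre of $U\to\Spec R$ automatically tamely ramified along $S$, so that ``tame'' and ``full'' fundamental groups coincide and the relevant lifting problems become obstruction-free. I would first treat the geometric statement. The morphism $U\to\Spec R$ is smooth, separated and of finite type with geometrically connected fibres (the fibres $U_K$, $U_k$ are obtained from the geometrically connected curves $X_K$, $X_k$ by deleting the finitely many points of $S_K$, resp.\ $S_k$), $S\to\Spec R$ is finite \'etale, and $(X,S)\to\Spec R$ is a relative smooth compactification with horizontal boundary. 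Base changing along $R\to\ob R$, so that the residue field becomes the algebraically closed $\bar k$, the specialisation theory of SGA~1 produces a specialisation homomorphism $\pi_1(U_{\ob K},\bar\xi_1)\twoheadrightarrow\pi_1(U_{\bar k},\bar\xi_2)$ which is surjective and an isomorphism onto the prime-to-$(\operatorname{char}\bar k)$ quotient of the target; since $\operatorname{char}\bar k=0$ it is an isomorphism, and this is $\ob{\Sp}_U$. The choice of a path relating $\bar\xi_2$ to the base point implicit in this construction is exactly what makes $\ob{\Sp}_U$ canonical only up to an inner automorphism of $\pi_1(U_{\bar k},\bar\xi_2)$.

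Next I would upgrade to the arithmetic statement. Write $U=U_R$. The open immersion $U_K\hookrightarrow U$ induces $\pi_1(U_K,\xi_1)\to\pi_1(U,\xi_2)$, and this is surjective because $U$ is normal and integral with $U_K$ dense, so a connected finite \'etale cover of $U$ has connected generic fibre. On the other hand the closed immersion $U_k\hookrightarrow U$ induces an isomorphism $\pi_1(U_k,\xi_2)\simeq\pi_1(U,\xi_2)$: this is the rigidity of finite \'etale covers under a proper morphism over a henselian local base (SGA~1, Exp.~X), applied in the logarithmic setting to the log-proper, log-smooth morphism $(X,S)\to\Spec R$ — the point being that, $\operatorname{char}k=0$, the covers of $U$ tame relative to $S$ are all of its finite \'etale covers, and every finite \'etale cover of the special fibre $U_k$ lifts uniquely over $R$, the lift being algebraic since $R$ is complete (formal GAGA). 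I would then define $\Sp_U$ as the composite of $\pi_1(U_K,\xi_1)\to\pi_1(U,\xi_2)$ with the inverse of this isomorphism, which gives the desired surjection $\Sp_U:\pi_1(U_K,\xi_1)\twoheadrightarrow\pi_1(U_k,\xi_2)$, well defined up to an inner automorphism of $\pi_1(U_k,\xi_2)$ coming from the base-point choices. All the functors involved are compatible with the structure morphisms to $\Spec R$, $\Spec K$, $\Spec k$, and under them $\pi_1(\Spec K)=G_K$ maps to $\pi_1(\Spec R)\simeq\pi_1(\Spec k)=G_k$ ($R$ being henselian) by the canonical surjection $G_K\twoheadrightarrow G_k$; this yields the commutativity of the right-hand square of~(\ref{specialisationaffine2}) with the bottom horizontal arrow the natural surjection. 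Restricting $\Sp_U$ to geometric fundamental groups, one has $\ker(\pi_1(U_k,\xi_2)\to G_k)=\pi_1(U_{\bar k},\bar\xi_2)$ and recovers $\ob{\Sp}_U$ together with the commutativity of the left-hand square; exactness of the bottom row is the fundamental exact sequence of the geometrically connected $k$-curve $U_k$.

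The one genuinely delicate ingredient is the isomorphism $\pi_1(U_k,\xi_2)\simeq\pi_1(U,\xi_2)$ for the \emph{non-proper} scheme $U$: for a proper relative curve this is the classical rigidity theorem over a henselian local base, but here one needs its enhancement to the log-proper, log-smooth pair $(X,S)$, i.e.\ one must lift finite \'etale covers of the quasi-projective special fibre $U_k$ over the complete base $R$, tamely along the horizontal boundary, and check that the lift is unique and compatible with the $G_K$-action. This is precisely where the hypotheses ``$X\to\Spec R$ proper smooth'', ``$S$ finite \'etale over $R$'', ``$R$ complete'' and ``$\operatorname{char}k=0$'' are all used; once it is granted — it is well known, via the tame fundamental group of Grothendieck--Murre, or via logarithmic \'etale homotopy theory — everything else in the argument is formal diagram chasing.
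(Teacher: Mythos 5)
The paper gives no proof of this lemma, citing \cite{Luminy} for an exposition; your sketch reproduces the standard argument from that literature: defining $\Sp_U$ via the zigzag $\pi_1(U_K,\xi_1)\to\pi_1(U,\xi_2)\xleftarrow{\ \sim\ }\pi_1(U_k,\xi_2)$, with the second arrow an isomorphism by the tame (equivalently, log-) specialisation theorem of Grothendieck--Murre and Orgogozo--Vidal applied to the log-smooth, log-proper pair $(X,S)/\Spec R$, which in equicharacteristic zero captures the full fundamental group. This is correct and is precisely the content the paper delegates to the cited reference, so your proposal matches the intended approach.
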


The homomorphisms $\ob{\Sp}_U$ and $\Sp_U$ are called \emph{specialisation homomorphisms of fundamental groups}.  The homomorphism $\Sp_U$ is defined in a natural way and induces the homomorphism $\ob{\Sp}_U$.  For an exposition, in particular the fact that $\ob{\Sp}_U$ is an isomorphism, see \cite{Luminy}.

%for the fact that the latter is an isomorphism 
%see \cite{Luminy} for an exposition.

\begin{proof}[Proof of Theorem \ref{galspec}]
Let $B\subset\tilde S$ be a finite subset of $\tilde S$, viewed as a reduced closed sub-scheme of $X_K$, and let $\B$ denote its schematic closure in $X$.  By construction, $\B$ is a divisor on $X$ which is finite \'etale over $R$ such that $\B_K=B$.  Denoting $U := X - \B$, by Lemma \ref{specialisationaffine} there exist specialisation homomorphisms $\Sp_U : \pi_1(U_K, \xi_1) \twoheadrightarrow \pi_1(U_k, \xi_2)$ and $\ob{\Sp}_U : \pi_1(U_{\ob{K}}, \bar\xi_1) \simeq \pi_1(U_{\bar k}, \bar\xi_2)$.

Since there exist such homomorphisms for every finite subset of $\tilde S$, we have a compatible system of surjective homomorphisms $\{ \Sp_U \}$, resp. of isomorphisms $\{ \ob{\Sp}_U \}$ (the compatibility follows from the construction of these homomorphisms).  
By Lemma \ref{Gallim}, taking inverse limits gives rise respectively to the surjective homomorphism $\Sp : \pi_1(X_K-\tilde S)\twoheadrightarrow G_{X_k}$ and the isomorphism $\ob{\Sp} : \pi_1(X_{\ob{K}} - \tilde S_{\ob{K}}) \simeq G_{X_{\bar k}}$, and moreover $\pi_1(X_K-\tilde S)$ and $\pi_1(X_{\ob{K}} - \tilde S_{\ob{K}})$ are naturally quotients of $G_{X_K}$ and $G_{X_{\ob{K}}}$ respectively.  Thus we have the required homomorphisms in diagram (\ref{specialisationgalois}), and this diagram is clearly commutative.
\end{proof}

One could consider the composite homomorphism $G_{X_K} \twoheadrightarrow \pi_1(X_K-\tilde S) \twoheadrightarrow G_{X_k}$, and similarly for $G_{X_{\ob{K}}}$, to be a specialisation homomorphism for absolute Galois groups.  However, we will reserve the label `Sp' for the homomorphism $\pi_1(X_K-\tilde S) \twoheadrightarrow G_{X_k}$, since this will be important in the next section.

\subsection{Ramification of sections}\label{ramificationofsections}

We use the notation of \S \ref{spechom}, and assume further that the closed fibre $X_k$ is \emph{hyperbolic}.  With $S$ and $U$ as in Lemma \ref{specialisationaffine}, consider again diagram (\ref{specialisationaffine2}), and recall that the kernel of the projection $G_K\twoheadrightarrow G_k$ is the inertia group $I_K$ associated to the discrete valuation on $K$.

\begin{lemma}\label{sppullback}
\begin{enumerate}
\item The projection $\pi_1(U_K, \xi_1) \twoheadrightarrow G_K$ restricts to an isomorphism $\ker(\Sp_U)\simeq I_K$.
\item The right square in diagram (\ref{specialisationaffine2}) is cartesian.
\end{enumerate}
\end{lemma}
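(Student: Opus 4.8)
The plan is to exploit the structure of the specialisation homomorphism of fundamental groups, together with the fact that $\ob{\Sp}_U$ is an isomorphism. Begin with part (i). The kernel of $\Sp_U : \pi_1(U_K,\xi_1) \twoheadrightarrow \pi_1(U_k,\xi_2)$ maps to $G_k$ via the composite with $\pi_1(U_k,\xi_2) \twoheadrightarrow G_k$; by the commutativity of the right square in diagram (\ref{specialisationaffine2}), the image of $\ker(\Sp_U)$ in $G_K$ lies in $\ker(G_K \twoheadrightarrow G_k) = I_K$. So there is a natural map $\ker(\Sp_U) \to I_K$, and it remains to see it is an isomorphism. Injectivity: if $\gamma \in \ker(\Sp_U)$ maps to $1$ in $G_K$, then $\gamma \in \pi_1(U_{\ob K},\bar\xi_1)$, so $\gamma$ is killed by $\ob{\Sp}_U$; since $\ob{\Sp}_U$ is an isomorphism, $\gamma = 1$. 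Surjectivity: this is the crux, and amounts to lifting inertia. One way to see it is that the exact sequence $1 \to \pi_1(U_{\ob K},\bar\xi_1) \to \pi_1(U_K,\xi_1) \to G_K \to 1$ is compatible, via $\ob{\Sp}_U$ and $\Sp_U$, with $1 \to \pi_1(U_{\bar k},\bar\xi_2) \to \pi_1(U_k,\xi_2) \to G_k \to 1$, and since the left vertical map is an isomorphism, a snake-lemma-type diagram chase on the two short exact sequences identifies $\ker(\Sp_U)$ with $\ker(G_K \twoheadrightarrow G_k) = I_K$; more precisely the sequence $1 \to \ker(\Sp_U) \to \pi_1(U_K,\xi_1) \xrightarrow{\Sp_U} \pi_1(U_k,\xi_2) \to 1$ surjects compatibly onto $1 \to I_K \to G_K \to G_k \to 1$ with isomorphic geometric parts, forcing $\ker(\Sp_U) \xrightarrow{\sim} I_K$.

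For part (ii), the right square of (\ref{specialisationaffine2}) is
\[
\begin{tikzcd}
\pi_1(U_K,\xi_1) \arrow{r}\arrow[two heads]{d}{\Sp_U} & G_K \arrow[two heads]{d}\\
\pi_1(U_k,\xi_2) \arrow{r} & G_k
\end{tikzcd}
\]
and the claim is that the induced map $\pi_1(U_K,\xi_1) \to \pi_1(U_k,\xi_2) \times_{G_k} G_K$ is an isomorphism. It is surjective because $\Sp_U$ is surjective and $G_K \to G_k$ is surjective (a standard fact about fibre products of surjections, using that any element of the fibre product has a preimage under $\Sp_U$ of the first coordinate, which can then be adjusted by an element of $\ker(\Sp_U)$ mapping correctly into $G_K$ — this is exactly where part (i) is used, since $\ker(\Sp_U) \xrightarrow{\sim} I_K = \ker(G_K \to G_k)$). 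It is injective because its kernel is $\ker(\Sp_U) \cap \ker(\pi_1(U_K,\xi_1) \to G_K) = \ker(\Sp_U) \cap \pi_1(U_{\ob K},\bar\xi_1)$, and on $\pi_1(U_{\ob K},\bar\xi_1)$ the map $\Sp_U$ restricts to the isomorphism $\ob{\Sp}_U$, so this intersection is trivial.

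The only genuine obstacle is the surjectivity in part (i), i.e. that every element of $I_K$ lifts to an element of $\ker(\Sp_U)$; equivalently, that $\ker(\Sp_U)$ is not ``too small''. I would handle this by comparing the two short exact sequences of fundamental groups via the specialisation maps: the map of short exact sequences has an isomorphism on geometric fundamental groups ($\ob{\Sp}_U$) and the identity on the left-hand arithmetic quotients being only formal, so the induced map on the quotients $\ker(\Sp_U) \to I_K$ is both injective (shown above) and surjective by the five-lemma / snake lemma applied to the diagram with rows $1 \to \pi_1(U_{\ob K},\bar\xi_1) \to \pi_1(U_K,\xi_1) \to \pi_1(U_k,\xi_2) \to 1$ (exact since $\Sp_U$ is surjective with the stated kernel containing $\pi_1(U_{\ob K},\bar\xi_1)$) and $1 \to \pi_1(U_{\ob K},\bar\xi_1) \to \pi_1(U_K,\xi_1) \xrightarrow{} G_K \to$ — but this needs care because $\pi_1(U_K,\xi_1) \to G_K$ is not surjective onto $G_k$ part only; the clean statement is that $\Sp_U$ and $G_K \twoheadrightarrow G_k$ have the same kernel intersected appropriately, which is precisely (i), and then (ii) is formal. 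I expect the cleanest writeup simply invokes that $\ob{\Sp}_U$ is an isomorphism (Lemma \ref{specialisationaffine}) and the construction of $\Sp_U$ as functorially compatible with the specialisation $G_K \to G_k$ on base fields, from which both statements follow by the elementary diagram chase sketched above.
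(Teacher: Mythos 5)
Your argument is correct and matches the paper's intended (but unwritten) proof: the paper literally says ``follows from a simple diagram chase,'' and your snake-lemma-for-groups chase (injectivity from $\ob{\Sp}_U$ being injective, surjectivity by lifting $\tau\in I_K$ to $\gamma\in\pi_1(U_K,\xi_1)$ and dividing by the $\ob{\Sp}_U$-preimage of $\Sp_U(\gamma)\in\pi_1(U_{\bar k},\bar\xi_2)$, then deducing (ii) formally) is exactly that chase. Your closing hedge that ``this needs care'' is unwarranted---the surjectivity argument you sketch works cleanly as stated, with no subtlety about $\pi_1(U_K,\xi_1)\to G_K$ being ``surjective onto the $G_k$ part only.''
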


\begin{proof}
The isomorphism $\ker(\Sp)\simeq I_K$ follows from a simple diagram chase, and (ii) follows easily from (i).
\end{proof}

Fix universal pro-\'etale covers $\tilde U_K \to U_K$ and $\tilde U_k \to U_k$ (corresponding to the geometric points $\xi_1$ and $\xi_2$ respectively), and let $\tilde X_{\tilde U_K}$ denote the normalisation of $X_K$ in $\tilde U_K$, and likewise $\tilde X_{\tilde U_k}$ the normalisation of $X_k$ in $\tilde U_k$ (see Definitions \ref{universalcover} and \ref{normalisation}).

\begin{lemma}\label{cuspdecomppullback}
Let $x\in S(k)$, and let $y'$ be the unique ($K$-rational) point of $S_K$ which specialises to $x$.  Let $\tilde y'$ be a point of $\tilde X_{\tilde U_K}$ above $y'$.  There exists a unique $\tilde x$ in $\tilde X_{\tilde U_k}$ above $x$, so that we have the following commutative diagram:
\[\begin{tikzpicture}[descr/.style={fill=white}, baseline=(current bounding box.center)]
\matrix(m)[matrix of math nodes,
row sep=3em, column sep=2.5em,
text height=1.5ex, text depth=0.25ex]
{1 & I_{\tilde y'} & D_{\tilde y'} & G_K & 1\\
1 & I_{\tilde x} & D_{\tilde x} & G_k & 1\\};
\path[->,font=\scriptsize]
(m-1-1) edge node[above]{} (m-1-2);
\path[->,font=\scriptsize]
(m-1-2) edge node[above]{} (m-1-3);
\path[->,font=\scriptsize]
(m-1-2) edge node[left]{$\ob{\Sp}_U$} node[below,rotate=90]{$\sim$} (m-2-2);
\path[->,font=\scriptsize]
(m-1-3) edge node[above]{} (m-1-4);
\path[->>,font=\scriptsize]
(m-1-3) edge node[left]{$\Sp_U$} (m-2-3);
\path[->,font=\scriptsize]
(m-1-4) edge node[above]{} (m-1-5);
\path[->>,font=\scriptsize]
(m-1-4) edge node[left]{} (m-2-4);
\path[->,font=\scriptsize]
(m-2-1) edge node[above]{} (m-2-2);
\path[->,font=\scriptsize]
(m-2-2) edge node[above]{} (m-2-3);
\path[->,font=\scriptsize]
(m-2-3) edge node[above]{} (m-2-4);
\path[->,font=\scriptsize]
(m-2-4) edge node[above]{} (m-2-5);
\end{tikzpicture}\]
where $D_{\tilde y'}$ (resp. $D_{\tilde x}$) is the decomposition group of $\tilde y'$ (resp. $\tilde x$) in $\pi_1(U_K,\xi_1)$ (resp. $\pi_1(U_k,\xi_2)$).
Moreover, the right square in this diagram is cartesian.
\end{lemma}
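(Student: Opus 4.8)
The plan is to reduce the whole statement, by a snake-lemma argument together with Lemma \ref{sppullback}, to a single geometric input: that the specialisation homomorphisms $\Sp_U$ and $\ob{\Sp}_U$ carry the decomposition and inertia groups at the cusp $y'$ into a decomposition, resp.\ inertia, group at a cusp $\tilde x$ of the special fibre, the map on inertia being an isomorphism.

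First I would construct $\tilde x$ and the vertical maps. Since $S \to \Spec R$ is finite \'etale and $R$ is henselian, $x \in S(k)$ lifts uniquely to a section $\sigma : \Spec R \to S \subset X$ with generic fibre the $K$-rational point $y'$. Now unwind $\Sp_U$ through covers, following the construction of the specialisation homomorphisms recalled after Lemma \ref{specialisationaffine}: taking Galois covers cofinally, each finite \'etale $W_i \to U_k$ defining a level of $\tilde U_k$ lifts, $R$ being complete, to a finite \'etale $\mathcal W_i \to U$, and the normalisation $\mathfrak Z_i \to X$ of $X$ in $\mathcal W_i$ is \emph{smooth over $R$}: it is \'etale over the $R$-smooth $X$ away from $S$, and along $S$ the ramification is tame (residue characteristic zero) while $S \to \Spec R$ is \'etale, so by Abhyankar's lemma $\mathfrak Z_i \to X$ is, locally over $S$, of the shape $t = u^{e_i}$ with $\mathfrak Z_i$ the spectrum of an $R$-smooth ring. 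Hence $(\mathfrak Z_i)_k \to X_k$ is the normalisation $Z_i \to X_k$ giving the corresponding level of $\tilde X_{\tilde U_k}$, $(\mathfrak Z_i)_K \to X_K$ is a level of $\tilde X_{\tilde U_K}$, and the common deck group $\Gal(\mathfrak Z_i/X)$ is identified with $\Gal((\mathfrak Z_i)_K/X_K)$ and with $\Gal(Z_i/X_k)$ so that $\Sp_U$ induces the identity on it. The image of $\tilde y'$ in $(\mathfrak Z_i)_K$ lies over $y'$; its schematic closure in the proper normal $R$-curve $\mathfrak Z_i$ is $\Spec$ of a discrete valuation ring finite and, by the local shape $t = u^{e_i}$, tamely totally ramified over $R$, hence of residue field $k$, and its closed point $x_i \in Z_i$ lies over $x$. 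As finite morphisms preserve schematic closures, the $x_i$ are compatible and define a point $\tilde x = (x_i)$ of $\tilde X_{\tilde U_k}$ above $x$. An element $g \in D_{\tilde y'}$ acts on $(\mathfrak Z_i)_K$ through an element of $\Gal(\mathfrak Z_i/X)$ fixing the image of $\tilde y'$; that same element, as an automorphism of $\mathfrak Z_i$, fixes the schematic closure and hence $x_i$, so $\Sp_U(g)$ fixes each $x_i$, i.e.\ $\Sp_U(D_{\tilde y'}) \subseteq D_{\tilde x}$ — this is the middle vertical map. Since $k(y') = K$, we have $I_{\tilde y'} = \ker(D_{\tilde y'}\twoheadrightarrow G_K) \subseteq \ker(\pi_1(U_K,\xi_1)\twoheadrightarrow G_K) = \pi_1(U_{\ob{K}},\bar\xi_1)$, on which $\Sp_U$ restricts to the isomorphism $\ob{\Sp}_U$; its image lands in $I_{\tilde x}$, and the restriction $I_{\tilde y'} \to I_{\tilde x}$ is an isomorphism because at each level the inertia of the image of $\tilde y'$ over $y'$ in $\Gal((\mathfrak Z_i)_K/X_K)$ is cyclic of order $e_i$, carried by the deck-group identification onto the inertia of $x_i$ over $x$ in $\Gal(Z_i/X_k)$, again cyclic of order $e_i$ (both being read off the local shape $t = u^{e_i}$ of the $R$-model). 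This produces the asserted commutative diagram of exact sequences, with the left vertical an isomorphism.

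Everything else is formal. The snake lemma applied to this diagram, with the left vertical an isomorphism and the right vertical the surjection $G_K \twoheadrightarrow G_k$, shows that $\Sp_U : D_{\tilde y'} \to D_{\tilde x}$ is surjective and that $\ker(D_{\tilde y'}\to D_{\tilde x}) = D_{\tilde y'}\cap\ker(\Sp_U)$ maps isomorphically onto $\ker(G_K\twoheadrightarrow G_k) = I_K$; comparing with Lemma \ref{sppullback}(i), by which $\ker(\Sp_U)$ itself maps isomorphically onto $I_K$, forces $\ker(\Sp_U) \subseteq D_{\tilde y'}$, hence $D_{\tilde y'} = \Sp_U^{-1}(D_{\tilde x})$. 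By Lemma \ref{sppullback}(ii) the right square of diagram (\ref{specialisationaffine2}) is cartesian, so the $\Sp_U$-preimage of any subgroup of $\pi_1(U_k,\xi_2)$ is its fibre product with $G_K$ over $G_k$; applied to $D_{\tilde x}$ this gives $D_{\tilde y'} = D_{\tilde x}\times_{G_k}G_K$, which is exactly the assertion that the right square of the diagram in the Lemma is cartesian. For uniqueness: the commutative diagram together with its cartesian right square forces $D_{\tilde x} = \Sp_U(D_{\tilde y'})$ and recovers $\tilde x$ as the specialisation of $\tilde y'$ in the models $\mathfrak Z_i$, so no other point above $x$ can fit the diagram.

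The main obstacle is the first step, namely verifying that $\Sp_U$ and $\ob{\Sp}_U$ really do send $D_{\tilde y'}$ and $I_{\tilde y'}$ into $D_{\tilde x}$ and $I_{\tilde x}$ with the inertia map an isomorphism. This is where the hypotheses genuinely enter: residue characteristic zero makes every cover tame along $S$, $S \to \Spec R$ being \'etale makes the normalised models $\mathfrak Z_i$ smooth over $R$, and completeness of $R$ turns the schematic closure of a point of a generic fibre into the spectrum of a single discrete valuation ring of residue field $k$; granting these, compatibility of $\Sp_U$ with the cusp is forced by the identification of the deck groups of $\mathfrak Z_i$ with those of its two fibres. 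An alternative, perhaps cleaner, route is to localise at the section: the formal completion of $X$ along $\sigma(\Spec R)$ has ring $R[[t]]$ with the divisor $\{t = 0\}$, the groups $D_{\tilde y'}$ and $D_{\tilde x}$ are then identified with the absolute Galois groups of $K((t))$ and $k((t))$, and the restriction of $\Sp_U$ to $D_{\tilde y'}$ is the specialisation homomorphism between those, for which the analogue of Lemma \ref{sppullback} (surjectivity, kernel $I_K$, cartesian square over $G_K \to G_k$) holds by the same machinery; compatibility with the global $\Sp_U$ is then functoriality of the construction under the map from the formal completion to $X$.
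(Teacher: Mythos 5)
Your proof is correct and takes essentially the same approach as the paper's: the paper too obtains $\Sp_U(D_{\tilde y'})\subseteq D_{\tilde x}$ from specialisation of points on coverings of $R$-curves, uses that $\ob{\Sp}_U$ is an isomorphism to get $I_{\tilde y'}\simeq I_{\tilde x}$ (and uniqueness via Lemma \ref{inertiaproperties}), and derives surjectivity and the cartesian square by the diagram chase of Lemma \ref{sppullback}; you have simply unwound the phrase ``functoriality of fundamental groups and the specialisation of points on (coverings of) $R$-curves'' into the explicit construction of the $R$-smooth models $\mathfrak Z_i$ via Abhyankar's lemma and their schematic closures. One minor imprecision worth flagging: the image $y_i'$ of $\tilde y'$ in $(\mathfrak Z_i)_K$ need not have residue field $K$, so its closure in $\mathfrak Z_i$ need not be totally ramified over $R$ with residue field $k$ --- the cover can extend the residue field at the cusp --- but this is harmless, since all your argument actually uses is that the closed point $x_i$ of that closure lies over $x$, which holds regardless.
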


\begin{proof}
The image of $D_{\tilde y'}$ under $\Sp_U$ is contained in $D_{\tilde x}$ for some $\tilde x$ in $\tilde X_{U_k}$ above $x$, as follows easily from the functoriality of fundamental groups and the specialisation of points on (coverings of) $R$-curves.  Moreover, such $\tilde x$ is unique by Lemma \ref{inertiaproperties}, since $\ob{\Sp}_U$ is an isomorphism, which implies the inertia subgroup $I_{\tilde y'}\subset D_{\tilde y'}$ maps isomorphically to $I_{\tilde x}\subset\pi_1(U_{\bar k},\bar\xi_2)$.  Commutativity of diagram (\ref{specialisationaffine2}) then implies that $D_{\tilde y'}$ maps surjectively onto $D_{\tilde x}$, whence the above diagram.  As in the proof of Lemma \ref{sppullback}, the right square in this diagram is cartesian.
\end{proof}

\begin{corollary}\label{cuspidalevenifyisnotacusp}
With the notation of Lemma \ref{cuspdecomppullback}, for any $K$-rational point $y$ of $U_K$ specialising to $x$ and any point $\tilde y$ of $\tilde U_K$ above $y$, $D_{\tilde y}$ is contained in $D_{\tilde y'}$ for some $\tilde y'$ in $\tilde X_{\tilde U_K}$ above $y'$.  In particular, a section $s:G_K\to\pi_1(U_K,\xi_1)$ with image contained in (hence equal to) $D_{\tilde y}$ is cuspidal (Definition \ref{cuspidal}), even though $y\not\in S_K$.
\end{corollary}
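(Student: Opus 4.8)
The plan is to deduce the corollary from Lemma \ref{cuspdecomppullback} by observing that, since $y$ specialises to $x$, the chosen point $\tilde y$ specialises to a point of $\tilde X_{\tilde U_k}$ lying over $x$, and then reading off the inclusion of decomposition groups from the cartesian squares already established.

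First I would note that $y$, viewed as a morphism $\Spec K \to X$, extends uniquely by the valuative criterion of properness applied to $X \to \Spec R$ to a morphism $\Spec R \to X$ whose closed point is the specialisation of $y$; by hypothesis this closed point is $x$, so $y$ and $y'$ have the same specialisation. Next, fixing a point $\tilde y$ of $\tilde U_K$ above $y$ (equivalently, of $\tilde X_{\tilde U_K}$), the same mechanism used in the proof of Lemma \ref{cuspdecomppullback} --- functoriality of fundamental groups together with the specialisation of points on coverings of the $R$-curve $X$ --- shows that $\tilde y$ specialises to a point $\tilde x'$ of $\tilde X_{\tilde U_k}$ above $x$, and that $\Sp_U(D_{\tilde y}) \subseteq D_{\tilde x'}$. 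Note that although $y \in U_K$, its specialisation $x$ lies in $S_k$, so $\tilde x'$ is a point over a cusp of $U_k$. Since the assignment $\tilde y' \mapsto \tilde x$ of Lemma \ref{cuspdecomppullback} is equivariant and surjective onto the points of $\tilde X_{\tilde U_k}$ above $x$ (a consequence of surjectivity of $\Sp_U$, of $\ob{\Sp}_U$ being an isomorphism, and of the compatibility of specialisation of points with the covers), I may choose the point $\tilde y'$ of $\tilde X_{\tilde U_K}$ above $y'$ whose associated point under Lemma \ref{cuspdecomppullback} is precisely this $\tilde x'$. The cartesian square of Lemma \ref{cuspdecomppullback}, together with Lemma \ref{sppullback}(ii), which identifies $\pi_1(U_K,\xi_1)$ with $\pi_1(U_k,\xi_2)\times_{G_k}G_K$ and $\Sp_U$ with the first projection, then gives $D_{\tilde y'} = \Sp_U^{-1}(D_{\tilde x'})$; hence $D_{\tilde y} \subseteq D_{\tilde y'}$, which is the first assertion.

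For the last sentence, since $y$ is a $K$-rational point of $U_K$ the inertia group $I_{\tilde y}$ is trivial, so $D_{\tilde y}$ maps isomorphically onto $G_K$ under $\pi_1(U_K,\xi_1)\twoheadrightarrow G_K$; consequently any section $s:G_K\to\pi_1(U_K,\xi_1)$ whose image is contained in $D_{\tilde y}$ has image exactly $D_{\tilde y}$, and by the first part this lies in $D_{\tilde y'}$ with $y' \in S_K$ a $K$-rational point. Thus $s$ factors through the decomposition group of a point over a cusp, that is, $s$ is cuspidal in the sense of Definition \ref{cuspidal} (applied over the base field $K$). I expect the only genuine obstacle to be the equivariance and surjectivity of the correspondence $\tilde y'\mapsto\tilde x$ needed to realise $\tilde x'$ over a chosen $\tilde y'$; everything else is bookkeeping with the cartesian squares. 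This matching can also be obtained more directly by passing to the $R$-models of the covers $Y_i$: the $R$-point of $X$ extending $y$ lifts through each finite cover $\X_i\to X$ to an $R_i$-point over a suitable finite extension $R_i$ of $R$, and the compatible system of closed points of these lifts is a point of $\tilde X_{\tilde U_k}$ over $x$, which one traces back through $\ob{\Sp}_U$ to the desired $\tilde y'$.
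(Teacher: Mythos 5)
Your proposal is correct and follows essentially the same route as the paper's proof: specialise $\tilde y$ to a point $\tilde x'$ of $\tilde X_{\tilde U_k}$ above $x$ with $\Sp_U(D_{\tilde y})\subseteq D_{\tilde x'}$, pick $\tilde y'$ above $y'$ mapping to $\tilde x'$ under the correspondence of Lemma~\ref{cuspdecomppullback}, and then use the cartesian square of that lemma (together with Lemma~\ref{sppullback}(ii)) to conclude $D_{\tilde y'}=\Sp_U^{-1}(D_{\tilde x'})\supseteq D_{\tilde y}$. The paper disposes of the matching step ($\tilde x'\mapsto \tilde y'$) with a brief appeal to a ``limit argument''; your two sketches (transitivity/equivariance of the $\pi_1$-actions on points, or lifting through $R$-models of the covers) supply the content behind that remark, and your justification of the parenthetical ``hence equal to'' via triviality of $I_{\tilde y}$ is also in order.
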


\begin{proof}
By specialisation, the point $\tilde y$ determines a point $\tilde x\in \tilde X_{\tilde U_k}$ such that $\Sp_U(D_{\tilde y}) \subset D_{\tilde x}$.  The statement then follows from Lemma \ref{cuspdecomppullback} and the universal property of cartesian squares. Note that for a point $\tilde x$ in $\tilde X_{\tilde U_k}$ above $x$ there exists a point $\tilde y'$ of $\tilde X_{\tilde U_K}$ above $y'$ such that the conclusion of Lemma \ref{cuspdecomppullback} holds (follows easily from a limit argument).
\end{proof}

By Lemma \ref{sppullback} (ii), any section of $\pi_1(U_k,\xi_2)$ naturally induces a section of $\pi_1(U_K,\xi_1)$, but the converse is not true in general.  Given any section $s:G_K\rightarrow\pi_1(U_K,\xi_1)$, let us define a homomorphism $\varphi_s:G_K\rightarrow\pi_1(U_k,\xi_2)$ by the composition $\varphi_s:=\Sp_U\circ s$.

\begin{equation}\label{defofvarphisandramified}
\begin{tikzpicture}[descr/.style={fill=white}, baseline=(current  bounding  box.center)]
\matrix(m)[matrix of math nodes,
row sep=4em, column sep=3em,
text height=1.5ex, text depth=0.25ex]
{1 & \pi_1(U_{\ob{K}},\bar\xi_1) & \pi_1(U_K, \xi_1) & G_K & 1\\
1 & \pi_1(U_{\bar k},\bar\xi_2) & \pi_1(U_k, \xi_2) & G_k & 1\\};
\path[->]
(m-1-1) edge (m-1-2);
\path[->,font=\scriptsize]
(m-1-2) edge (m-1-3) edge node[left]{$\ob{\Sp}_U$} node[below,rotate=90]{$\sim$} (m-2-2);
\path[->]
(m-1-3) edge (m-1-4);
\path[->>,font=\scriptsize]
(m-1-3) edge node[left]{$\Sp_U$} (m-2-3);
\path[->,font=\scriptsize]
(m-1-4) edge (m-1-5) edge[out=160,in=20] node[above]{$s$} (m-1-3) edge[out=200,in=60] node[left]{$\varphi_s$} (m-2-3);
\path[->>,font=\scriptsize]
(m-1-4) edge node[left]{} (m-2-4);
\path[->]
(m-2-1) edge (m-2-2);
\path[->]
(m-2-2) edge (m-2-3);
\path[->]
(m-2-3) edge (m-2-4);
\path[->]
(m-2-4) edge (m-2-5);
\end{tikzpicture}
\end{equation}

\begin{definition}\label{ramified}
We say the section $s$ is \emph{unramified} if $\varphi_s(I_K)=1$.  Otherwise we say $s$ is \emph{ramified}.

For an unramified section $s:G_K\rightarrow\pi_1(U_K,\xi_1)$ the map $\varphi_s$ factors through the projection $G_K\twoheadrightarrow G_k$ and $s$
induces a section $\bar s:G_k\rightarrow\pi_1(U_k,\xi_1)$.  This induced section will be called the \emph{specialisation of} $s$ and denoted $\bar s$.
\end{definition}

We now investigate under what conditions we may conclude that a given section $s : G_K \to \pi_1(U_K, \xi_1)$ has image contained in a decomposition group.

\begin{lemma}\label{unramifiedcuspidal}
Let $s:G_K\rightarrow\pi_1(U_K,\xi_1)$ be an unramified section, and suppose $\bar s(G_k)\subset D_{\tilde x}$ for some $x \in S(k)$ and some $\tilde x$ in $\tilde X_{\tilde U_k}$ above $x$.  Then $s(G_K)\subset D_{\tilde y'}$ for a unique $\tilde y'$ in $\tilde X_{\tilde U_K}$ above the unique point $y' \in S(K)$ specialising to $x$.
\end{lemma}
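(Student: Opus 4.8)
The plan is to combine the cartesian descriptions of $\pi_1(U_K,\xi_1)$ and of the cuspidal decomposition groups supplied by Lemmas \ref{sppullback} and \ref{cuspdecomppullback} with the hypothesis that $s$ is unramified. First I would reformulate diagram (\ref{specialisationaffine2}) via Lemma \ref{sppullback}(ii) as an identification $\pi_1(U_K,\xi_1)\simeq\pi_1(U_k,\xi_2)\times_{G_k}G_K$, under which $\Sp_U$ is the first projection and the structure map $\pi_1(U_K,\xi_1)\twoheadrightarrow G_K$ is the second. A section $s$ of the latter is then the datum $g\mapsto(\varphi_s(g),g)$ with $\varphi_s=\Sp_U\circ s$; and since $s$ is unramified, $\varphi_s$ kills $I_K$, hence factors as $\varphi_s=\bar s\circ\mathrm{pr}$ with $\mathrm{pr}:G_K\twoheadrightarrow G_k$ the projection and $\bar s$ the specialisation of $s$ (Definition \ref{ramified}). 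Thus $s(g)=(\bar s(\mathrm{pr}\,g),\,g)$ for all $g\in G_K$.

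For existence, let $y'\in S(K)$ be the unique point specialising to $x$ (it exists and is unique because $S\to\Spec R$ is finite \'etale; this is the setup of Lemma \ref{cuspdecomppullback}), and let $\tilde y'$ be the point of $\tilde X_{\tilde U_K}$ above $y'$ for which the conclusion of Lemma \ref{cuspdecomppullback} holds with the given $\tilde x$ — such a $\tilde y'$ exists, cf.\ the last remark in the proof of Corollary \ref{cuspidalevenifyisnotacusp}. The right square of the diagram in Lemma \ref{cuspdecomppullback}, placed compatibly inside the cartesian square of Lemma \ref{sppullback}(ii), then yields $D_{\tilde y'}=D_{\tilde x}\times_{G_k}G_K$ inside $\pi_1(U_k,\xi_2)\times_{G_k}G_K$. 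Since $\bar s(\mathrm{pr}\,g)\in D_{\tilde x}$ by hypothesis and has image $\mathrm{pr}\,g$ in $G_k$, equal to the image of the second coordinate, the element $s(g)=(\bar s(\mathrm{pr}\,g),g)$ lies in $D_{\tilde x}\times_{G_k}G_K=D_{\tilde y'}$; hence $s(G_K)\subseteq D_{\tilde y'}$.

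For uniqueness, I would run the same computation for an arbitrary point $\tilde y''$ of $\tilde X_{\tilde U_K}$ above $y'$: under the bijection between the points of $\tilde X_{\tilde U_K}$ above $y'$ and the points of $\tilde X_{\tilde U_k}$ above $x$ induced by $\ob{\Sp}_U$ together with the isomorphisms $I_{\tilde y''}\xrightarrow{\ \sim\ }I_{\tilde x''}$ of Lemma \ref{cuspdecomppullback}, one has $D_{\tilde y''}=D_{\tilde x''}\times_{G_k}G_K$, so that $s(G_K)\subseteq D_{\tilde y''}$ if and only if $\bar s(G_k)\subseteq D_{\tilde x''}$. It therefore suffices to show that $\bar s(G_k)$ is contained in a unique decomposition subgroup above $x$ — the standard fact that a section of the fundamental group of a hyperbolic curve lies in at most one decomposition subgroup. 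Concretely, if $\bar s(G_k)\subseteq D_{\tilde x}\cap D_{\tilde x'}$ with $\tilde x\neq\tilde x'$, writing $\tilde x'=\sigma\tilde x$ with $\sigma\in\pi_1(U_{\bar k},\bar\xi_2)$ one sees that $\bar s$ and the section $g\mapsto\sigma^{-1}\bar s(g)\sigma$ of $D_{\tilde x}\twoheadrightarrow G_k$ differ by the $I_{\tilde x}$-valued coboundary $g\mapsto\sigma^{-1}\big(\bar s(g)\sigma\bar s(g)^{-1}\big)$, and one concludes — using Lemma \ref{inertiaproperties} (valid since $U_k$ is hyperbolic) together with the triviality of the centraliser in $\pi_1(U_{\bar k},\bar\xi_2)$ of the image of a section (slimness of the geometric fundamental group) — that $\sigma\in I_{\tilde x}$, i.e.\ $\tilde x'=\tilde x$. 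I expect this last step to be the main obstacle: transferring the problem to the special fibre through Lemma \ref{cuspdecomppullback} is routine, but the unicity of the cuspidal decomposition subgroup containing $\bar s(G_k)$ is a genuine anabelian input, and care is needed because $I_{\tilde x}$ is not normal in $\pi_1(U_{\bar k},\bar\xi_2)$, so one cannot simply invoke a long exact cohomology sequence.
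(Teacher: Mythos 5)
Your existence argument matches the paper's: the paper also observes that, since $s$ is unramified, $s$ is the pullback of $\bar s$ along $G_K\twoheadrightarrow G_k$ (this is exactly your identification $s(g)=(\bar s(\mathrm{pr}\,g),g)$ via the cartesian square of Lemma~\ref{sppullback}(ii)), and that $D_{\tilde y'}$ is the pullback of $D_{\tilde x}$ by the cartesian square of Lemma~\ref{cuspdecomppullback}, whence $s(G_K)\subseteq D_{\tilde y'}$.

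The uniqueness argument, however, diverges from the paper and contains the gap you yourself flag. The paper's ``unique $\tilde y'$'' refers to uniqueness of the point $\tilde y'$ above $y'$ satisfying $\Sp_U(D_{\tilde y'})\subseteq D_{\tilde x}$ for the \emph{given} $\tilde x$: since $\ob{\Sp}_U$ is an isomorphism carrying $I_{\tilde y'}$ onto an inertia subgroup $I_{\tilde x'}\subseteq\pi_1(U_{\bar k},\bar\xi_2)$ above $x$ (Lemma~\ref{cuspdecomppullback}), the containment $\Sp_U(D_{\tilde y'})\subseteq D_{\tilde x}$ forces $I_{\tilde x'}\subseteq D_{\tilde x}\cap\pi_1(U_{\bar k},\bar\xi_2)=I_{\tilde x}$, and then $\tilde x'=\tilde x$ by Lemma~\ref{inertiaproperties}; so $\tilde y'=\ob{\Sp}_U^{-1}(I_{\tilde x})$ is uniquely determined. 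That is all that is proved and all that is used downstream (Lemmas~\ref{galptthunramified} and \ref{galptthramified} need the compatible system of $\tilde y_U$'s determined by the fixed $\tilde x_U$'s). You instead try to show that $\bar s(G_k)$ lies in a \emph{unique} cuspidal decomposition subgroup above $x$, which is a genuinely stronger anabelian claim. Your proposed mechanism — conclude $\sigma\in I_{\tilde x}$ from the $I_{\tilde x}$-valued cocycle $g\mapsto\sigma^{-1}\bar s(g)\sigma\bar s(g)^{-1}$ by invoking ``triviality of the centraliser of $\bar s(G_k)$ in $\pi_1(U_{\bar k},\bar\xi_2)$'' — is not justified: slimness of $\pi_1(U_{\bar k},\bar\xi_2)$ gives trivial centralisers of its \emph{open} subgroups, not of the image $\bar s(G_k)$ of a complement, and the latter triviality is not available here. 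The paper sidesteps this entirely: it never claims $\tilde x$ is unique, only that the $\tilde y'$ it produces from the given $\tilde x$ is unique, and that follows directly from Lemmas~\ref{inertiaproperties} and~\ref{cuspdecomppullback}. You should replace the centraliser argument by the paper's inertia-injectivity argument; as written, that step of your uniqueness proof does not go through.
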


\begin{proof}
There exists a point $\tilde y'$ in $\tilde X_{\tilde U_K}$ above $y'$ such that the image of $D_{\tilde y'} \subset \pi_1(U_K, \xi_1)$ under $\Sp_U$ is contained in $D_{\tilde x}$ (cf. proof of Corollary \ref{cuspidalevenifyisnotacusp}), and this $\tilde y'$ is unique as follows from Lemma \ref{inertiaproperties}.  The pullback of $\bar s$ via the natural projection $G_K\twoheadrightarrow G_k$ gives rise to the section $s$, so $s(G_K)$ must be contained in the pullback of $D_{\tilde x}$ via $G_K\twoheadrightarrow G_k$, which is $D_{\tilde y'}$ by Lemma \ref{cuspdecomppullback}.
\end{proof}

\begin{proposition}\label{weightargument}
Assume that $k$ satisfies condition (ii) in Definition \ref{conditions}.  Let $s:G_K\rightarrow\pi_1(U_K,\xi_1)$ be a section, and denote $\varphi_s:=\Sp_U\circ s$ as above.  If $\varphi_s(I_K)$ is non-trivial then it is contained in the inertia group $I_{\tilde x}$ of a unique point $\tilde x$ of $\tilde X_{\tilde U_k}$ above a point of $S_k$.
\end{proposition}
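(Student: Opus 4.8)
The plan is to analyse the homomorphism $\varphi_s = \Sp_U\circ s$ on the inertia subgroup $I_K$, to locate its image in the geometrically abelian layer by means of hypothesis (ii), and then to appeal to the structure of inertia in the geometric fundamental group of $U_k$. I would begin with two elementary observations. Since $\Sp_U$ is a homomorphism of fundamental groups, $\varphi_s$ is a group homomorphism, and by commutativity of diagram (\ref{specialisationaffine2}) the composite of $\varphi_s$ with the projection $\pi_1(U_k,\xi_2)\twoheadrightarrow G_k$ equals the composite of $s$ with $\pi_1(U_K,\xi_1)\twoheadrightarrow G_K\twoheadrightarrow G_k$, hence equals the natural projection $G_K\twoheadrightarrow G_k$ (as $s$ is a section). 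Therefore $\varphi_s$ carries $I_K=\ker(G_K\twoheadrightarrow G_k)$ into $\ker(\pi_1(U_k,\xi_2)\twoheadrightarrow G_k)=\pi_1(U_{\bar k},\bar\xi_2)$, and since $I_K$ is normal in $G_K$ the image $N:=\varphi_s(I_K)$ is a normal subgroup of $\varphi_s(G_K)$, which surjects onto $G_k$. Because the residue characteristic is zero, $I_K\simeq\hat\Z(1)$ is procyclic, with $G_K$ acting by conjugation through the cyclotomic character $\chi_K$, which moreover factors through $G_k$ (all roots of unity lying in the maximal unramified extension of $K$). Transporting along $\varphi_s|_{I_K}$, we find that $N$ is procyclic and torsion-free (being a closed subgroup of the free profinite group $\pi_1(U_{\bar k},\bar\xi_2)$, as $U_k$ is affine hyperbolic), so $N\simeq\prod_{\ell\in T}\Z_\ell(1)$ for a nonempty set of primes $T$, and $\varphi_s(G_K)$ — hence $G_k$ — acts on $N$ by conjugation through the image of $\chi_K$.

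The decisive use of hypothesis (ii) is to show that the image of $N$ in $\pi_1(X_{k,\bar k},\bar\xi_2)^{\ab}\simeq TJ_k$, the Tate module of the Jacobian $J_k$ of the compactification $X_k$ (cf. \S 2.2 applied to $X_k$), is trivial. Indeed, that image is a $G_k$-submodule of $TJ_k$ which is a quotient of $\hat\Z(1)$ with cyclotomic $G_k$-action; reading off $\ell$-adic components yields a $G_k$-equivariant map $\Z_\ell(1)\to T_\ell J_k$ for each prime $\ell$, and every such map vanishes precisely because $k$ satisfies condition (ii) of Definition \ref{conditions} (applied to the abelian variety $A=J_k$). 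Consequently $N$ lies in the kernel of $\pi_1(U_{\bar k},\bar\xi_2)\twoheadrightarrow\pi_1(X_{k,\bar k},\bar\xi_2)^{\ab}$; in particular the image of $N$ in $\pi_1(U_{\bar k},\bar\xi_2)^{\ab}$ is contained in the submodule generated by the images of the cuspidal inertia subgroups.

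To finish, I would invoke the characterisation of cuspidal inertia subgroups of the geometric fundamental group of a hyperbolic curve: a nontrivial procyclic subgroup which is normalised by an overgroup surjecting onto $G_k$, on which $G_k$ acts by conjugation through the cyclotomic character, and whose image in $\pi_1(X_{k,\bar k},\bar\xi_2)^{\ab}$ is trivial, is contained in the inertia subgroup $I_{\tilde x}$ of a point $\tilde x$ of $\tilde X_{\tilde U_k}$ above a point of $S_k$. This is where the non-abelian structure of $\pi_1(U_{\bar k},\bar\xi_2)$ — its weight filtration, equivalently the theory of cuspidalisation, cf. \cite{hoshimochizuki} — comes in, the essential point being exactly that the previous step has removed the weight $-1$ ambiguity. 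Uniqueness of $\tilde x$ is then immediate from Lemma \ref{inertiaproperties}: were $N$ contained in $I_{\tilde x}\cap I_{\tilde x'}$ with $\tilde x\neq\tilde x'$, it would be trivial, contrary to assumption.

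I expect the last step to be the main obstacle, namely upgrading the abelian (weight) information to the genuinely non-abelian conclusion that $N$ sits inside a single cuspidal inertia subgroup, rather than merely inside the closed subgroup they generate; steps one and two are routine once hypothesis (ii) is in hand, and the remaining verifications (that $\varphi_s$ is a homomorphism, the identification of $I_K$ with $\hat\Z(1)$, torsion-freeness of $\pi_1(U_{\bar k},\bar\xi_2)$) are standard.
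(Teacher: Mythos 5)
Your proposal follows essentially the same route as the paper: observe that $\varphi_s(I_K)$ is a procyclic subgroup of $\pi_1(U_{\bar k},\bar\xi_2)$, use condition (ii) to show its image in $\pi_1(X_{\bar k},\bar\xi_2)^{\ab}$ vanishes, and then invoke the characterisation of cuspidal inertia — the ``obstacle'' you flag at the end is exactly where the paper cites \cite[Lemma 1.6]{hoshimochizuki}, and uniqueness then follows from Lemma \ref{inertiaproperties} as you say. The additional structure you develop (normalisation of $N$ by $\varphi_s(G_K)$, the cyclotomic action) is not used by the paper and is not needed for the cited lemma, but it does no harm.
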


\begin{proof}
This follows from \cite[Lemma 1.6]{hoshimochizuki}.  Indeed, if $\varphi_s(I_K)$ is non-trivial then, by commutativity of diagram (\ref{defofvarphisandramified}), it is contained in $\pi_1(U_{\bar k},\bar\xi_2)$, and it is a procyclic subgroup of $\pi_1(U_{\bar k},\bar\xi_2)$ because $I_K\simeq\hat\Z(1)$ is procyclic.  Since $k$ satisfies condition (ii) of Definition \ref{conditions}, the image of $\varphi_s(I_K)$ under the composite $G_k$-homomorphism
\[\begin{tikzcd}[column sep=small]
\pi_1(U_{\bar k},\bar\xi_2) \arrow[twoheadrightarrow]{r}{} &\pi_1(X_{\bar k},\bar\xi_2) \arrow[twoheadrightarrow]{r}{} &\pi_1(X_{\bar k},\bar\xi_2)^{\ab}
\end{tikzcd}\]
is trivial.  It then follows from loc. cit. that $\varphi_s(I_K)$ must be contained in an inertia subgroup of $\pi_1(U_{\bar k}, \bar\xi_2)$, which is unique by Lemma \ref{inertiaproperties}.
\end{proof}

\begin{remark}\label{properimpliesunramified}
Under the hypotheses of Proposition \ref{weightargument}, if $S=\emptyset$, so that $U=X$ is proper over $\Spec R$, then it follows from the arguments in the proof of Proposition \ref{weightargument} that any section $s:G_K\rightarrow\pi_1(X_K,\xi_1)$ is necessarily unramified.  
\end{remark}

\begin{lemma}\label{ramified=cuspidal}
Assume that $k$ satisfies condition (ii) in Definition \ref{conditions}, and let $s:G_K\to\pi_1(U_K,\xi_1)$ be a section and $\varphi_s:=\Sp_U\circ s$.  If $s$ is ramified then $\varphi_s(G_K)\subset D_{\tilde x}$ for a unique $k$-rational point $x$ of $S_k$ and a unique $\tilde x$ in $\tilde X_{\tilde U_k}$ above $x$, and $s(G_K)\subset D_{\tilde y'}$ for a unique point $\tilde y'$ of $\tilde X_{\tilde U_K}$ above the unique $K$-rational point $y'$ of $S_K$ specialising to $x$.  In particular, $s$ is cuspidal.
%(The following is commented out because I don't think I use it.)
%Moreover, the $k$-rational point $x$ is unique if $k$ also satisfies condition (iii) (a) of Definition \ref{conditions}.
\end{lemma}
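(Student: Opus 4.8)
The plan is to first place $\varphi_s(G_K)$ inside a single decomposition group $D_{\tilde x}$ of $\pi_1(U_k,\xi_2)$ by a normaliser argument, then to transport this inclusion along $\Sp_U$ to conclude that $s(G_K)$ lies in a decomposition group over the corresponding cusp of $U_K$. Since $s$ is ramified we have $\varphi_s(I_K)\ne 1$, and since $k$ satisfies condition (ii) of Definition \ref{conditions}, Proposition \ref{weightargument} gives a \emph{unique} point $\tilde x$ of $\tilde X_{\tilde U_k}$, lying above some closed point $x$ of $S_k$, with $\varphi_s(I_K)\subseteq I_{\tilde x}$; moreover $\varphi_s(I_K)$ lies in the geometric subgroup $\pi_1(U_{\bar k},\bar\xi_2)$, since it maps to $1$ in $G_k$ by commutativity of diagram (\ref{defofvarphisandramified}). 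The key observation for the first step is that $I_K$ is normal in $G_K$, being the kernel of $G_K\twoheadrightarrow G_k$, so $\varphi_s(I_K)$ is normalised by $\varphi_s(G_K)$; hence for every $\sigma\in\varphi_s(G_K)$ one has $\varphi_s(I_K)=\sigma\varphi_s(I_K)\sigma^{-1}\subseteq\sigma I_{\tilde x}\sigma^{-1}=I_{\sigma\tilde x}$, so $\varphi_s(I_K)$ lies in $I_{\tilde x}\cap I_{\sigma\tilde x}$. As $U_k$ is hyperbolic (an open subscheme of the hyperbolic curve $X_k$) and $\varphi_s(I_K)\ne 1$, Lemma \ref{inertiaproperties} forces $\sigma\tilde x=\tilde x$, i.e. $\sigma\in D_{\tilde x}$; letting $\sigma$ range over $\varphi_s(G_K)$ gives $\varphi_s(G_K)\subseteq D_{\tilde x}$. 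To see that $x$ is $k$-rational I would use that, by commutativity of (\ref{defofvarphisandramified}), the composite of $\varphi_s$ with $\pi_1(U_k,\xi_2)\twoheadrightarrow G_k$ is the natural surjection $G_K\twoheadrightarrow G_k$, while the image of $D_{\tilde x}$ under $\pi_1(U_k,\xi_2)\twoheadrightarrow G_k$ is $G_{k(x)}=\Gal(\bar k/k(x))$; hence $G_{k(x)}=G_k$ and $k(x)=k$. Uniqueness of $\tilde x$ (hence of $x$) among points whose decomposition group contains $\varphi_s(G_K)$ is immediate from Lemma \ref{inertiaproperties}, since a second such point $\tilde x_1$ would give $\varphi_s(I_K)\subseteq I_{\tilde x}\cap I_{\tilde x_1}$.

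For the second step, note that since $S\to\Spec R$ is finite \'etale and $R$ is complete, the $k$-rational point $x$ of $S_k$ lifts uniquely to an $R$-point of $S$, whose generic fibre is the unique $K$-rational point $y'$ of $S_K$ specialising to $x$. I would then transport the inclusion $\varphi_s(G_K)\subseteq D_{\tilde x}$ through $\Sp_U$. By the remark at the end of the proof of Corollary \ref{cuspidalevenifyisnotacusp} there is a point $\tilde y'$ of $\tilde X_{\tilde U_K}$ above $y'$ related to $\tilde x$ as in Lemma \ref{cuspdecomppullback}; for this $\tilde y'$ the right-hand square of Lemma \ref{cuspdecomppullback} is cartesian, and combining it with the cartesian square of Lemma \ref{sppullback}(ii) — which identifies $\pi_1(U_K,\xi_1)$ with $\pi_1(U_k,\xi_2)\times_{G_k}G_K$ — shows that $D_{\tilde y'}$ is exactly $\Sp_U^{-1}(D_{\tilde x})$ inside $\pi_1(U_K,\xi_1)$. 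Since $\Sp_U(s(G_K))=\varphi_s(G_K)\subseteq D_{\tilde x}$, this yields $s(G_K)\subseteq D_{\tilde y'}$. For uniqueness of $\tilde y'$: if also $s(G_K)\subseteq D_{\tilde y''}$ for some $\tilde y''$ above $y'$, then applying $\Sp_U$ together with Lemma \ref{cuspdecomppullback} gives $\varphi_s(G_K)\subseteq D_{\tilde x''}$ where $\tilde x''$ is the specialisation of $\tilde y''$; comparing inertia via $\varphi_s(I_K)\subseteq I_{\tilde x}\cap I_{\tilde x''}$ forces $\tilde x''=\tilde x$, whence $\tilde y''=\tilde y'$ by the uniqueness observed in the proof of Lemma \ref{unramifiedcuspidal}. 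Thus $s$ factors through $D_{\tilde y'}$ with $y'\in S_K$, i.e. $s$ is cuspidal in the sense of Definition \ref{cuspidal}.

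I expect the only genuinely delicate point to be the identification $D_{\tilde y'}=\Sp_U^{-1}(D_{\tilde x})$ in the second step: it requires pinning down the correct point $\tilde y'$ above $y'$ (whose existence comes from the limit argument noted after Corollary \ref{cuspidalevenifyisnotacusp}) and then overlaying the two cartesian squares of Lemmas \ref{sppullback}(ii) and \ref{cuspdecomppullback}. Everything else — the normaliser argument placing $\varphi_s(G_K)$ in $D_{\tilde x}$, the rationality of $x$, the Hensel lift producing $y'$, and the various inertia-intersection arguments for uniqueness — should be routine given the results of \S\ref{spechom} and \S\ref{ramificationofsections}.
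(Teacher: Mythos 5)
Your argument is correct and follows essentially the same route as the paper's proof: Proposition \ref{weightargument} to place $\varphi_s(I_K)$ in a unique inertia subgroup, the normaliser argument (since $I_K\trianglelefteq G_K$) combined with Lemma \ref{inertiaproperties} to force $\varphi_s(G_K)\subseteq D_{\tilde x}$, surjectivity onto $G_k$ for rationality of $x$, and then transport to $D_{\tilde y'}$ via the cartesian squares of Lemmas \ref{sppullback}(ii) and \ref{cuspdecomppullback}. You spell out the final step — identifying $D_{\tilde y'}$ with $\Sp_U^{-1}(D_{\tilde x})$ by overlaying the two cartesian squares — more explicitly than the paper, which simply appeals to ``a similar argument to that used in the proof of Corollary \ref{cuspidalevenifyisnotacusp}''; otherwise the proofs coincide.
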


\begin{proof}
If $\varphi_s(I_K)$ is non-trivial then, by Proposition \ref{weightargument}, it must be contained in a unique inertia group $I_{\tilde x}$ for some $x\in S_k$ and some $\tilde x\in\tilde X_{\tilde U_k}$ above $x$.  Since $\varphi_s(G_K)$ normalises $\varphi_s(I_K)$, for some $\sigma\in G_K$ we have $\varphi_s(I_K)=\varphi_s(\sigma)\cdot \varphi_s(I_K)\cdot \varphi_s(\sigma)^{-1}\subseteq \varphi_s(\sigma)\cdot I_{\tilde x}\cdot \varphi_s(\sigma)^{-1}=I_{\varphi_s(\sigma)\cdot\tilde x}$.  But $\varphi_s(I_K)$ is contained in a unique inertia group, so $\varphi_s(\sigma)\cdot\tilde x=\tilde x$ and $\varphi_s(G_K)$ fixes $\tilde x$, i.e. $\varphi_s(G_K)\subseteq D_{\tilde x}$.  Moreover, $x$ is necessarily a $k$-rational point since, by commutativity of diagram (\ref{defofvarphisandramified}), $\varphi_s(G_K)$ maps surjectively onto $G_k$.  A similar argument to that used in the proof of Corollary 3.8 implies that $s(G_K)\subseteq D_{\tilde y'}$ for some $\tilde y'$ in $\tilde X_{\tilde U_K}$ above the unique point $y'$ of $S_K$ specialising to $x$. Moreover, such point $\tilde y'$ is unique by Lemma \ref{inertiaproperties}.
%It remains to prove the uniqueness of $x$ under condition (iii) (a) of Definition \ref{conditions}.  Suppose there exists another $k$-rational point $x'\in X_k$ such that $\varphi_s(G_K)\subset D_{\tilde x'}$ for some $\tilde x'$ in $\tilde X_{U_k}$ above $x'$.  The section $s$ induces a section $s^{\et} : G_K \to \pi_1(X_K,\bar\xi_1)$ of the \'etale fundamental group of the projective curve $X_K$, which is unramified (Remark \ref{properimpliesunramified}) and thus induces a section $\bar s^{\et} : G_k \to \pi_1(X_k,\bar\xi_2)$.  Then $\bar s^{\et}$ is conjugate to two sections $\bar s_x$, $\bar s_{x'} : G_k \to \pi_1(X_k, \bar\xi_2)$ induced from $x,x'\in X_k(k)$ by functoriality.  Hence, $\bar s_x$ is conjugate to $\bar s_{x'}$, so $x=x'$ by injectivity of sequence (\ref{sabptth}).
\end{proof}

%\begin{remark}
%The use of sequence (\ref{sabptth}) in the above proof relies on the existence of a $k$-rational point of $X_k$, in order to identify the maximal abelian quotient of $\pi_1(X_{\bar k},\bar\xi_2)$ with the Tate module of the Jacobian of $X_{\bar k}$.  In this case, the fact that $s$ is ramified implies existence of a $k$-rational point of $X_k$.
%\end{remark}

Let $\tilde S$ be as in Definition \ref{stilde}.  For a section $s:G_K\rightarrow\pi_1(X_K-\tilde S)$ (see Definition \ref{pi1stilde}), we write $\varphi_s:=\Sp\circ s$ for the composition of $s$ with the specialisation homomorphism $\Sp : \pi_1(X_K - \tilde S)\twoheadrightarrow G_{X_k}$ of Theorem \ref{galspec}.
\begin{equation}\label{varphistilde}
\begin{tikzpicture}[descr/.style={fill=white}, baseline=(current  bounding  box.center)]
\matrix(m)[matrix of math nodes,
row sep=4em, column sep=3em,
text height=1.5ex, text depth=0.25ex]
{\pi_1(X_K-\tilde S) & G_K\\
G_{X_k} & G_k\\};
\path[->]
(m-1-1) edge (m-1-2);
\path[->>,font=\scriptsize]
(m-1-1) edge node[left]{$\Sp$} (m-2-1);
\path[->,font=\scriptsize]
(m-1-2) edge[out=150,in=30] node[above]{$s$} (m-1-1) edge[out=200,in=60] node[left]{$\varphi_s$} (m-2-1);
\path[->>,font=\scriptsize]
(m-1-2) edge node[left]{} (m-2-2);
\path[->,font=\scriptsize]
(m-2-1) edge node[above]{} (m-2-2);
\end{tikzpicture}
\end{equation}
We define the ramification and specialisation of $s$ analogously to Definition \ref{ramified}.  For an open subset $U_k \subset X_k$ with complement $S_k$, we will denote by $S_K$ the set of points of $\tilde S$ which specialise to $S_k$, $D$ the schematic closure of $S_K$ in $X$, and $U := X \setminus D$, thus $U_K=X_K\setminus S_K$. We will denote by $s_U : G_K\rightarrow\pi_1(U_K,\xi_1)$ the section of $\pi_1(U_K, \xi_1)$ naturally induced by $s$, and by $\varphi_U$ the composition $\varphi_U := \Sp_U \circ s_U : G_K \to \pi_1(U_k, \xi_2)$ of $s_U$ with the specialisation homomorphism $\Sp_U : \pi_1(U_K,\xi_1) \twoheadrightarrow \pi_1(U_k,\xi_2)$ of Lemma \ref{specialisationaffine}.

Since $s$ induces such a section $s_U : G_K \to \pi_1(U_K, \xi_1)$ for every open subset $U_K\subset X_K$ as above, it determines a compatible system of sections $\{ s_U \}$, parameterised by the open subsets $U\subset X$ as above.  Conversely, such a system determines a section of $\pi_1(X_K-\tilde S)$.  Similarly, since $s$ induces a homomorphism $\varphi_U : G_K \to \pi_1(U_k, \xi_2)$ for every open subset of $X_k$, it determines a compatible system of homomorphisms $\{ \varphi_U \}$ whose inverse limit $\varprojlim_{U_k \subset X_k \textup{ open}}\varphi_U$ is exactly the homomorphism $\varphi_s$ of diagram (\ref{varphistilde}).

\begin{lemma}\label{toramifiedsubsets}
A section $s:G_K\to\pi_1(X_K-\tilde S)$ is ramified if and only if there is a non empty open subset $U_k\subset X_k$ for which the section $s_U:G_K\to\pi_1(U_K,\xi_1)$ induced as above by $s$ is ramified.
\end{lemma}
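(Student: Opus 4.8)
The plan is to unwind the definitions and reduce the statement to the elementary fact that an element of an inverse limit of groups is trivial precisely when each of its components is trivial. Recall that, by the conventions adopted after diagram (\ref{varphistilde}), the section $s : G_K \to \pi_1(X_K - \tilde S)$ is ramified if and only if $\varphi_s(I_K) \ne 1$, where $\varphi_s = \Sp \circ s$; and for a non-empty open subset $U_k \subset X_k$ (with associated $S_K$, $D$ and $U = X \setminus D$ as in the discussion preceding the lemma) the induced section $s_U : G_K \to \pi_1(U_K, \xi_1)$ is ramified if and only if $\varphi_U(I_K) \ne 1$, where $\varphi_U = \Sp_U \circ s_U$. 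The two inputs I will use are: first, by Lemma \ref{Gallim} there is a canonical identification $G_{X_k} = \varprojlim_{U_k} \pi_1(U_k, \xi_2)$, so that $G_{X_k}$ embeds into $\prod_{U_k} \pi_1(U_k, \xi_2)$ and an element of $G_{X_k}$ is trivial if and only if its image in $\pi_1(U_k, \xi_2)$ is trivial for every non-empty open $U_k \subset X_k$; second, as observed just before the statement, the compatible system $\{ \varphi_U \}$ satisfies $\varphi_s = \varprojlim_{U_k} \varphi_U$, i.e. for every $g \in G_K$ and every such $U_k$ the image of $\varphi_s(g)$ under the projection $G_{X_k} \twoheadrightarrow \pi_1(U_k, \xi_2)$ is precisely $\varphi_U(g)$.

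Granting these inputs, both implications will be immediate. If $s_U$ is ramified for some $U_k$, I pick $g \in I_K$ with $\varphi_U(g) \ne 1$; since $\varphi_U(g)$ is a component of $\varphi_s(g)$, this forces $\varphi_s(g) \ne 1$, hence $\varphi_s(I_K) \ne 1$ and $s$ is ramified. Conversely, if $s$ is ramified, I pick $g \in I_K$ with $\varphi_s(g) \ne 1$; by the first input some component $\varphi_U(g)$ of $\varphi_s(g)$ is non-trivial, so $\varphi_U(I_K) \ne 1$ for that $U_k$ and $s_U$ is ramified.

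There is essentially no genuine obstacle here; the only point that needs care is the justification of the second input, namely that $\varphi_U$ is really the $U_k$-component of $\varphi_s$. For this I would appeal to the construction in Theorem \ref{galspec} of $\Sp$ as an inverse limit of the specialisation homomorphisms $\Sp_U$ of Lemma \ref{specialisationaffine} --- equivalently, to the compatibility of the $\Sp_U$ with the transition maps $\pi_1(U_k', \xi_2) \twoheadrightarrow \pi_1(U_k, \xi_2)$ for $U_k \subset U_k'$ --- together with the fact that the sections $s_U$ are obtained from $s$ by projecting along the quotient maps $\pi_1(X_K - \tilde S) \twoheadrightarrow \pi_1(U_K, \xi_1)$, so that they too form a compatible system; all of this was already arranged in the paragraphs preceding the lemma. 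I would also record that it suffices to let $U_k$ range over the non-empty open subsets of the curve $X_k$, since these are exactly the indices occurring in the inverse limit presenting $G_{X_k}$.
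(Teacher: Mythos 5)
Your proof is correct and takes essentially the same approach as the paper's: the paper observes in one line that $\varphi_s(I_K) = \varprojlim_{U_k}\varphi_U(I_K)$ (noting the surjectivity of the transition maps) and concludes triviality of the limit subgroup is equivalent to triviality of each term. You spell out the same reduction element by element, which if anything sidesteps the need to invoke surjectivity, but the underlying observation and the two compatibility inputs you use are exactly what the paper relies on.
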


\begin{proof}
Since $\varphi_s(I_K) = \varprojlim_{U_k \subset X_k \textup{ open}} \varphi_U(I_K)$ (with surjective transition maps), $\varphi_s(I_K)$ is trivial if and only if $\varphi_U(I_K)$ is trivial for every open subset $U_k \subset X_k$.
\end{proof}

\begin{proposition}\label{ramifiedsubsets}
Assume that $k$ satisfies condition (ii) of Definition \ref{conditions}, and let $s:G_K\to\pi_1(X_K-\tilde S)$ be a section.  Let $U'_k \subset U_k \subset X_k$ be any two non-empty open subsets, and let $\tilde X_{\tilde U_k}$, resp. $\tilde X_{\tilde U'_k}$ be the normalisation of $X_k$ in some universal pro-\'etale cover $\tilde U_k \to U_k$, resp. $\tilde U'_k \to U'_k$.

Suppose that $s_U$ is ramified, with $\varphi_U(I_K)$ contained in the inertia subgroup $I_{\tilde x_U} \subset \pi_1(U_k,\xi_2)$ for some $x\in (X_k\setminus U_k)(k)$ and some $\tilde x_U$ in $\tilde X_{\tilde U_k}$ above $x$ (see Lemma \ref{ramified=cuspidal} and its proof).  Then $s_{U'}$ is ramified, and $\varphi_{U'}(I_K)$ is contained in the inertia subgroup $I_{\tilde x_{U'}}\subset\pi_1(U'_k,\xi_2)$ of some $\tilde x_{U'}$ in $\tilde X_{\tilde U'_k}$ above the same point $x\in (X_k\setminus U_k)(k)\subset (X_k\setminus U_k')(k)$.
\end{proposition}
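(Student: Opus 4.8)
The plan is to propagate the ramification downwards along the canonical surjection $p \colon \pi_1(U'_k,\xi_2) \twoheadrightarrow \pi_1(U_k,\xi_2)$ induced by the open immersion $U'_k \hookrightarrow U_k$. By the compatibility of the system $\{\varphi_U\}$ recorded in the discussion preceding Lemma \ref{toramifiedsubsets} one has $\varphi_U = p \circ \varphi_{U'}$, so $\varphi_U(I_K) = p\bigl(\varphi_{U'}(I_K)\bigr)$. Since $s_U$ is ramified, $\varphi_U(I_K)\neq 1$, hence $\varphi_{U'}(I_K)\neq 1$ and $s_{U'}$ is ramified; moreover $\varphi_{U'}(I_K)$ is a non-trivial procyclic subgroup of $\pi_1(U'_{\bar k},\bar\xi_2)$ (it is the image of $I_K\simeq\hat\Z(1)$ and lies in the geometric part, exactly as in the proof of Proposition \ref{weightargument}). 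Applying Proposition \ref{weightargument} to $U'$ and $s_{U'}$ — the only point at which condition (ii) of Definition \ref{conditions} is used — produces a unique point $\tilde x_{U'}$ of $\tilde X_{\tilde U'_k}$, lying above some point $x'\in X_k\setminus U'_k$, with $\varphi_{U'}(I_K)\subseteq I_{\tilde x_{U'}}\subset\pi_1(U'_k,\xi_2)$.

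It then remains to show $x'=x$. The surjection $p$ is compatible with a morphism $\tilde X_{\tilde U'_k}\to\tilde X_{\tilde U_k}$ lying over the identity of $X_k$ and with the actions of $\pi_1(U'_k,\xi_2)$ and $\pi_1(U_k,\xi_2)$; let $\tilde w\in\tilde X_{\tilde U_k}$ be the image of $\tilde x_{U'}$ — it again lies above $x'$. Functoriality of decomposition and inertia subgroups gives $p(I_{\tilde x_{U'}})\subseteq I_{\tilde w}$, so
\[\varphi_U(I_K)=p\bigl(\varphi_{U'}(I_K)\bigr)\subseteq p(I_{\tilde x_{U'}})\subseteq I_{\tilde w}.\]
As $\varphi_U(I_K)\neq 1$ we get $I_{\tilde w}\neq 1$, which by the properties recalled in \S2.1 (inertia is trivial at points not in $U_k$) forces $x'\in X_k\setminus U_k$. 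Finally $\varphi_U(I_K)$ is a non-trivial procyclic subgroup contained in $I_{\tilde w}$ and, by hypothesis, in $I_{\tilde x_U}$; since $U_k$ is an open subscheme of the hyperbolic curve $X_k$ it is hyperbolic, so Lemma \ref{inertiaproperties} gives $I_{\tilde w}=I_{\tilde x_U}$, whence $\tilde w=\tilde x_U$ and in particular $x'=x\in(X_k\setminus U_k)(k)\subseteq(X_k\setminus U'_k)(k)$. Thus $\tilde x_{U'}$ is a point above $x$ with $\varphi_{U'}(I_K)\subseteq I_{\tilde x_{U'}}$, and $s_{U'}$ is ramified, which is the assertion.

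The main obstacle is the functoriality invoked in the second paragraph: constructing the morphism $\tilde X_{\tilde U'_k}\to\tilde X_{\tilde U_k}$ compatible with $p$ and verifying the inclusion $p(I_{\tilde x_{U'}})\subseteq I_{\tilde w}$, i.e. tracking how compatible systems of points and their stabilisers behave under the natural map between universal pro-\'etale covers. This is entirely standard, but, as in the proof of Corollary \ref{cuspidalevenifyisnotacusp}, may best be organised through a limit argument over the cofinal family of opens with finite complement; everything else is a short diagram chase together with the triviality of inertia away from the cusps and the uniqueness of inertia subgroups in the hyperbolic case (Lemma \ref{inertiaproperties}).
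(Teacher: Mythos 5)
Your proof is correct and takes essentially the same route as the paper. Both start from the compatibility $\varphi_U = p\circ\varphi_{U'}$ to deduce $\varphi_{U'}(I_K)\neq 1$, both then apply the weight argument (Proposition~\ref{weightargument}, via Lemma~\ref{ramified=cuspidal}) to land $\varphi_{U'}(I_K)$ in an inertia subgroup $I_{\tilde x_{U'}}$ over some $x'$, and both finish by pushing $\tilde x_{U'}$ forward to a point $\tilde w \in \tilde X_{\tilde U_k}$ and using functoriality of inertia together with Lemma~\ref{inertiaproperties} to force $\tilde w = \tilde x_U$ and $x'=x$. The only organisational difference is that the paper argues by contradiction with a two-case split on whether $x'$ meets $X_k\setminus U_k$, while you argue directly, deriving $x'\notin U_k$ from $I_{\tilde w}\neq 1$ and then invoking the nontrivial intersection once; this is a small simplification in the bookkeeping, not a new idea. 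One slip in wording: the parenthetical ``(inertia is trivial at points not in $U_k$)'' has the condition reversed --- the inertia group $I_{\tilde w}\subset \pi_1(U_k,\xi_2)$ of a point $\tilde w$ above $x'$ is trivial precisely when $x'\in U_k$; it is the points of $S_k = X_k\setminus U_k$ that carry nontrivial inertia. The inference you draw from it ($I_{\tilde w}\neq 1 \Rightarrow x'\notin U_k$) is nevertheless correct.
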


\begin{proof}
The image of $\varphi_{U'}(I_K)$ under the homomorphism $\pi_1(U'_k,\xi_2)\twoheadrightarrow\pi_1(U_k,\xi_2)$ coincides with $\varphi_U(I_K)$, which is nontrivial by assumption.  Thus $\varphi_{U'}(I_K)$ must also be non-trivial, and therefore it is contained in an inertia subgroup $I_{\tilde z_{U'}} \subset \pi_1(U'_k,\xi_2)$ for some $z\in (X_k\setminus U_k')(k)$ and some $\tilde z_{U'}$ in $\tilde X_{\tilde U'_k}$ above $z$ (see Lemma \ref{ramified=cuspidal} and its proof).  Let $\tilde z_U$ be the image of $\tilde z_{U'}$ in $\tilde X_{\tilde U_k}$.  Suppose $\tilde z_U \ne \tilde x_U$.  If $z \in X_k\setminus U_k$ then, by functoriality, the image of $I_{\tilde z_{U'}}$ under $\pi_1(U'_k,\xi_2)\twoheadrightarrow\pi_1(U_k,\xi_2)$ is the inertia subgroup $I_{\tilde z_U} \subset \pi_1(U_k,\xi_2)$ at $\tilde z_U$, which intersects trivially with $I_{\tilde x_U}$ by Lemma \ref{inertiaproperties}.  Meanwhile, if $z \not\in X_k\setminus U_k$ the image of $I_{\tilde z_{U'}}$ in $\pi_1(U_k,\xi_2)$ is trivial.  Both of these contradict compatibility of $\varphi_U$ and $\varphi_{U'}$, so we must have $\tilde z_U=\tilde x_U$ and $z=x$.
\end{proof}

The ramification of a section $s:G_K\to\pi_1(X_K-\tilde S)$ is therefore characterised by the ramification of the system of sections $s_U$ it induces.  Let us now fix a universal pro-\'etale cover $\tilde X_{\tilde S} \to X_K - \tilde S$ (see Definition \ref{universalcovertilde}), and denote by $\overline {k(X_k)}$ the separable closure of the function field $k(X_k)$ determined by the geometric point $\xi_2$.

\begin{lemma}\label{galptthunramified}
Let $s:G_K\to\pi_1(X_K-\tilde S)$ be an unramified section, and suppose its specialisation $\bar s : G_k \to G_{X_k}$ is geometric with $\bar s(G_k)\subset D_{\tilde x}$ for some $x\in X(k)$ and some extension $\tilde x$ of $x$ to $\overline {k(X_k)}$.  Then $s(G_K)\subset D_{\tilde y}$ for a unique $\tilde y$ in $\tilde X_{\tilde S}$ above the unique ($K$-rational) point $y$ of $\tilde S$ specialising to $x$.
\end{lemma}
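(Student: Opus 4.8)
The plan is to prove this as the "birational" analogue of Lemma \ref{unramifiedcuspidal}: reduce the statement to the affine level by working over the open subsets $U_k\subset X_k$, apply Lemma \ref{unramifiedcuspidal} to each induced section $s_U$, and then pass to the inverse limit. Recall from the discussion preceding Lemma \ref{toramifiedsubsets} that $s$ corresponds to the compatible system of sections $\{s_U:G_K\to\pi_1(U_K,\xi_1)\}$ parameterised by the non-empty opens $U_k\subset X_k$ (with $U$, $S_k$, $S_K$, $D$ as introduced there), and that $\varphi_s=\varprojlim_{U_k}\varphi_U$ with $\varphi_U=\Sp_U\circ s_U$. Since $s$ is unramified we have $\varphi_s(I_K)=1$, hence $\varphi_U(I_K)=1$ for every $U_k$ (Lemma \ref{toramifiedsubsets}); thus every $s_U$ is unramified, inducing a specialisation $\bar s_U:G_k\to\pi_1(U_k,\xi_2)$, and by construction $\bar s=\varprojlim_{U_k}\bar s_U$ under the identification $G_{X_k}=\varprojlim_{U_k}\pi_1(U_k,\xi_2)$, i.e. $\bar s_U$ is the composite of $\bar s$ with the projection $G_{X_k}\twoheadrightarrow\pi_1(U_k,\xi_2)$. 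The fixed valuation $\tilde x$ of $\overline{k(X_k)}$ determines for each $U_k$ a point $\tilde x_U$ of $\tilde X_{\tilde U_k}$ above $x$ (the compatible system of centres of $\tilde x$ on the covers $Y_i\to X_k$), and the image of $D_{\tilde x}\subset G_{X_k}$ under $G_{X_k}\twoheadrightarrow\pi_1(U_k,\xi_2)$ is contained in $D_{\tilde x_U}$; hence the hypothesis $\bar s(G_k)\subset D_{\tilde x}$ gives $\bar s_U(G_k)\subset D_{\tilde x_U}$ for every $U_k$.

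Next I restrict attention to the non-empty opens $U_k\subset X_k$ with $x\notin U_k$; these are cofinal among all non-empty opens of $X_k$ (replace any $V_k$ by $V_k\setminus\{x\}$), so they determine a cofinal family of the sections $s_U$ and it suffices to establish the conclusion over this family. For such a $U_k$ the point $x$ lies in the reduced divisor $S_k:=X_k\setminus U_k$, and since $x\in X(k)$ we have $x\in S_k(k)$. Then $s_U$ satisfies the hypotheses of Lemma \ref{unramifiedcuspidal} (it is unramified, and $\bar s_U(G_k)\subset D_{\tilde x_U}$ with $x\in S_k(k)$), so $s_U(G_K)\subset D_{\tilde y'_U}$ for a unique point $\tilde y'_U$ of $\tilde X_{\tilde U_K}$ above the unique point of $S_K$ specialising to $x$. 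By Definition \ref{stilde} this last point is independent of $U_k$: it is the unique element $y$ of $\tilde S$ specialising to $x$, and $y$ is $K$-rational because $x$ is $k$-rational.

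Finally I pass to the limit. For $U'_k\subseteq U_k$ in our family, the surjection $\pi_1(U'_K,\xi_1)\twoheadrightarrow\pi_1(U_K,\xi_1)$ carries $D_{\tilde y'_{U'}}$ into $D_{\tilde z}$ for $\tilde z$ the image of $\tilde y'_{U'}$; since $s_U$ is induced from $s_{U'}$ we get $s_U(G_K)\subset D_{\tilde z}$, so by the uniqueness in Lemma \ref{unramifiedcuspidal} necessarily $\tilde z=\tilde y'_U$. Thus the $\tilde y'_U$ form a compatible system, assembling to a point $\tilde y$ of $\tilde X_{\tilde S}$ above $y$, and as $s(G_K)$ maps into $D_{\tilde y'_U}$ under each projection $\pi_1(X_K-\tilde S)\twoheadrightarrow\pi_1(U_K,\xi_1)$ we conclude $s(G_K)\subset D_{\tilde y}$. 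Uniqueness of $\tilde y$ follows from the uniqueness of each $\tilde y'_U$ (which rests on the hyperbolicity of $X_k$ via Lemma \ref{inertiaproperties}), and uniqueness of $y$ from Definition \ref{stilde}. The step I expect to require the most care is precisely this inverse-limit bookkeeping — verifying that the decomposition groups of the valuation $\tilde x$, and of the cuspidal points $\tilde y'_U$, behave compatibly under the projections $G_{X_k}\twoheadrightarrow\pi_1(U_k,\xi_2)$ and $\pi_1(X_K-\tilde S)\twoheadrightarrow\pi_1(U_K,\xi_1)$ — whereas the substantive input is the direct application of Lemma \ref{unramifiedcuspidal} at each finite level.
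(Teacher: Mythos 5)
Your argument is correct and follows essentially the same route as the paper's: reduce to the affine opens $U_k\subset X_k$ with $x\notin U_k$, apply Lemma \ref{unramifiedcuspidal} at each level, and pass to the inverse limit. You spell out the compatibility bookkeeping (the projection of $D_{\tilde x}$ into $D_{\tilde x_U}$, and the compatibility of the $\tilde y'_U$ via the uniqueness clause of Lemma \ref{unramifiedcuspidal}) in more detail than the paper does, but the structure and the key input are identical.
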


\begin{proof}
For any open subset $U_k \subset X_k$, the section $s_U$ is unramified by Lemma \ref{toramifiedsubsets}.  Choose $U_k$ so that $x \not\in U_k$, and let $\tilde X_{\tilde U_K}$, respectively $\tilde X_{\tilde U_k}$ denote the normalisation of $X_K$, resp. $X_k$ in some universal pro-\'etale cover of $U_K$, resp. $U_k$.  By compatibility of the homomorphisms $\varphi_U$, we have $\bar s_U(G_k) \subset D_{\tilde x_U}$ for some $\tilde x_U$ in $\tilde X_{U_k}$ above $x$.  Hence, by Lemma \ref{unramifiedcuspidal}, $s_U(G_K) \subset D_{\tilde y_U}$ for some unique $\tilde y_U$ in $\tilde X_{U_K}$ above the unique point $y$ of $S_K$ specialising to $x$.  This is also true for every open subset of $X_k$ contained in $U_k$, so taking inverse limits yields the required statement.
\end{proof}

\begin{lemma}\label{galptthramified}
Assume that $k$ satisfies condition (ii) of Definition \ref{conditions}, and let $s:G_K\to\pi_1(X_K-\tilde S)$ be a section.  If $s$ is ramified then $\varphi_s(G_K)\subset D_{\tilde x}$ for a unique valuation $\tilde x$ on $\overline {k(X_k)}$ extending a $k$-rational point $x$ of $X_k$, and $s(G_K)\subset D_{\tilde y}$ for a unique $\tilde y$ in $\tilde X_{\tilde S}$ above the unique $K$-rational point $y$ in $\tilde S$ specialising to $x$.
\end{lemma}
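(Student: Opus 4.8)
The plan is to mirror the proof of Lemma \ref{galptthunramified}, replacing the cuspidality input Lemma \ref{unramifiedcuspidal} by Lemma \ref{ramified=cuspidal} and using Proposition \ref{ramifiedsubsets} to control how the relevant boundary point behaves as we shrink the open subset. Since $s$ is ramified, Lemma \ref{toramifiedsubsets} provides a non-empty open $U_k\subset X_k$ for which the induced section $s_U:G_K\to\pi_1(U_K,\xi_1)$ is ramified. First I would apply Lemma \ref{ramified=cuspidal} to $s_U$: it yields a unique $k$-rational point $x$ of $X_k\setminus U_k$ together with a unique $\tilde x_U$ in $\tilde X_{\tilde U_k}$ above $x$ such that $\varphi_U(I_K)\subset I_{\tilde x_U}$ is non-trivial and $\varphi_U(G_K)\subset D_{\tilde x_U}$, and a unique $\tilde y_U$ in $\tilde X_{\tilde U_K}$ above the unique point $y$ of $\tilde S$ specialising to $x$, with $s_U(G_K)\subset D_{\tilde y_U}$.

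Next I would propagate this to all smaller opens. For any non-empty open $U'_k\subset U_k$ one has $x\in X_k\setminus U_k\subset X_k\setminus U'_k$, so Proposition \ref{ramifiedsubsets} shows $s_{U'}$ is ramified with $\varphi_{U'}(I_K)$ contained in an inertia subgroup lying above the \emph{same} point $x$; Lemma \ref{ramified=cuspidal} then furnishes the unique $\tilde x_{U'}$ above $x$ with $\varphi_{U'}(G_K)\subset D_{\tilde x_{U'}}$, and the unique $\tilde y_{U'}$ above the same point $y$ (which, being the unique point of $\tilde S$ specialising to $x$, does not depend on $U'$) with $s_{U'}(G_K)\subset D_{\tilde y_{U'}}$. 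The opens $U'_k\subset U_k$ are cofinal among all non-empty opens of $X_k$, since any two non-empty opens of the irreducible curve $X_k$ meet, and the uniqueness just recorded forces the families $(\tilde x_{U'})$ and $(\tilde y_{U'})$ to be compatible under the transition maps. Passing to the inverse limit over this cofinal family, exactly as in the proof of Lemma \ref{galptthunramified} and using $G_{X_k}=\varprojlim_{U_k}\pi_1(U_k,\xi_2)$ from Lemma \ref{Gallim}, the system $(\tilde x_{U'})$ defines a valuation $\tilde x$ of $\overline{k(X_k)}$ extending $x$ and the system $(\tilde y_{U'})$ a point $\tilde y$ of $\tilde X_{\tilde S}$ above $y$, with $\varphi_s(G_K)=\varprojlim\varphi_{U'}(G_K)\subset\varprojlim D_{\tilde x_{U'}}=D_{\tilde x}$ and, similarly, $s(G_K)\subset D_{\tilde y}$.

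It then remains to handle uniqueness. By commutativity of diagram (\ref{varphistilde}) the group $\varphi_s(G_K)$ surjects onto $G_k$, hence is non-trivial; since decomposition groups of two distinct rank one valuations of $\overline{k(X_k)}$ intersect trivially (cf. Remark \ref{bscuniqueness} and \cite[Corollary 12.1.3]{CONF}), $\tilde x$ is the only valuation of $\overline{k(X_k)}$ with $\varphi_s(G_K)\subset D_{\tilde x}$, and in particular the closed point $x$ it extends is unique. As $\tilde S$ is in bijection with $X_k^{\cl}$ by construction, $y$ is then the unique $K$-rational point of $\tilde S$ specialising to $x$. Finally, if $s(G_K)$ were also contained in $D_{\tilde z}$ for some $\tilde z$ in $\tilde X_{\tilde S}$ above $y$, projecting to each $\pi_1(U'_K,\xi_1)$ would give $s_{U'}(G_K)\subset D_{(\tilde z)_{U'}}$ with $(\tilde z)_{U'}$ above $y$, which by the uniqueness clause of Lemma \ref{ramified=cuspidal} forces $(\tilde z)_{U'}=\tilde y_{U'}$ for every $U'$, hence $\tilde z=\tilde y$.

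I expect the main obstacle to be organisational rather than conceptual: once Lemma \ref{ramified=cuspidal} and Proposition \ref{ramifiedsubsets} are available, the real work is the bookkeeping needed to verify that the locally defined decomposition and inertia subgroups assemble into genuine inverse systems and that the distinguished boundary point $x$ (and hence $y$) does not move as $U_k$ shrinks — this last point being precisely the content of Proposition \ref{ramifiedsubsets}.
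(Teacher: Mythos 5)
Your proposal follows the paper's proof essentially step for step: invoke Lemma \ref{toramifiedsubsets} to find a ramified $s_U$, apply Lemma \ref{ramified=cuspidal} to locate $x$, $\tilde x_U$, $y$, $\tilde y_U$, use Proposition \ref{ramifiedsubsets} to show the boundary point does not move as $U_k$ shrinks, pass to the inverse limit, and settle uniqueness by the argument of \cite[Corollary 12.1.3]{CONF}. Your write-up is somewhat more explicit about cofinality, compatibility of the inverse systems, and the final uniqueness check, but it is the same argument as the paper's.
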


\begin{proof}
Lemma \ref{toramifiedsubsets} implies that, for some open subset $U_k \subset X_k$, the section $s_U$ is ramified.  Let $\tilde X_{\tilde U_K}$, respectively $\tilde X_{\tilde U_k}$ denote the normalisation of $X_K$, resp. $X_k$ in some universal pro-\'etale cover of $U_K$, resp. $U_k$.  By Lemma \ref{ramified=cuspidal} we have $\varphi_U(G_K) \subset D_{\tilde x_U}$ for some $x \in (X_k\setminus U_k)(k)$ and some $\tilde x_U$ in $\tilde X_{\tilde U_k}$ above $x$, and $s_U(G_K) \subset D_{\tilde y_{U}}$ for some unique $\tilde y_U$ in $\tilde X_{\tilde U_K}$ above the unique $K$-rational point $y$ of $\tilde S$ specialising to $x$.  By Proposition \ref{ramifiedsubsets}, this is true for every open subset $U'_k\subset X_k$ contained in $U_k$, thus taking inverse limits over the open subsets of $X_k$ yields the required statement (the uniqueness of $\tilde x$ follows from the same argument as in the proof of \cite[Corollary 12.1.3]{CONF}).
\end{proof}

\section{Sections of absolute Galois groups of curves over function fields}

\subsection{Specialisation of sections of absolute Galois groups}\label{specialisationofsections}

Let $k$ be a field of characteristic zero, and let $C$ a smooth, separated, connected curve over $k$ with function field $K$.  Let $\X\rightarrow C$ be a flat, proper, smooth relative curve, with generic fibre $X := \X \times_C K$ which is a geometrically connected curve over $K$.  For some $c\in C^{\cl}$, let $K_c$ denote the completion of $K$ with respect to the valuation corresponding to $c$, and let $X_c:=X \times_K K_c$ be the base change of $X$ to $K_c$.  

Let $\xi$ be a geometric point of $X$ with value in its generic point.  This determines an algebraic closure $\overline {K(X)}$ of the function field of $X$ and an algebraic closure $\ob{K}$ of $K$, as well as a geometric point $\bar\xi$ of $X_{\ob{K}}$.  Likewise let $\xi_c$ be a geometric point of $X_c$ with value in its generic point, which determines an algebraic closure $\ob{K_c}$ of $K_c$ and a geometric point $\bar\xi_c$ of $X_{\ob{K_c}}$.  Fix an embedding $\ob{K}\hookrightarrow \ob{K_c}$.  This determines a natural inclusion $i : X(\ob{K}) \hookrightarrow X_c(\ob{K_c})$.  Set-theoretically, an element $y : \Spec\ob{K_c} \to X_c$ of $X_c(\ob{K_c})$ maps the unique point of $\Spec\ob{K_c}$ to a closed point of $X_c$.  This closed point will be called an \emph{algebraic point} of $X_{K_c}$ if $y \in i(X(\ob{K}))$; otherwise, it will be called a \emph{transcendental point}.  Let us denote by $X_c^{\tr}$ the complement in $X_c^{\cl}$ of the set of all algebraic points of $X_c^{\cl}$.

\begin{definition}
With the above notation, define the group $\pi_1(X_c^{\tr})$ to be the inverse limit
\[\pi_1(X_c^{\tr}) := \varprojlim_{U \subset X \textup{ open}} \pi_1(U_c, \xi_c)\]
over all open subsets $U \subset X$, where $U_c$ denotes the base change $U \times_K K_c$.
\end{definition}

Recall there is an exact sequence $1 \to G_{X_{\ob{K}}} \to G_X \to G_K := \Gal(\ob{K}/K) \to 1$, where $G_X$ is the Galois group of $X$ with base point $\xi$ (cf. Lemma \ref{Gallim}).

\begin{lemma}\label{pullbackgx}
We have a commutative diagram of exact sequences
\[\begin{tikzpicture}[descr/.style={fill=white}, baseline=(current  bounding  box.center)]
\matrix(m)[matrix of math nodes,
row sep=3em, column sep=2.5em,
text height=1.5ex, text depth=0.25ex]
{1 & \pi_1(X_{\ob{K_c}}^{\tr}) & \pi_1(X_c^{\tr}) & G_{K_c} & 1\\
1 & G_{X_{\ob{K}}} & G_X & G_K & 1\\};
\path[->]
(m-1-1) edge (m-1-2);
\path[->]
(m-1-2) edge (m-1-3) edge (m-2-2);
\path[->]
(m-1-3) edge (m-1-4) edge (m-2-3);
\path[->]
(m-1-4) edge (m-1-5) edge (m-2-4);
\path[->]
(m-2-1) edge (m-2-2);
\path[->]
(m-2-2) edge (m-2-3);
\path[->]
(m-2-3) edge (m-2-4);
\path[->]
(m-2-4) edge (m-2-5);
\end{tikzpicture}\]
where $G_{K_c}=\Gal (\ob{K_c}/K_c)$, $\pi_1(X_{\ob{K_c}}^{\tr})$ is defined so that the upper horizontal sequence is exact, the middle vertical map is defined up to conjugation,
the left vertical map is an isomorphism, and the right square is cartesian.
\end{lemma}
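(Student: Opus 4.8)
The plan is to prove the analogous statement over each nonempty open subscheme $U\subseteq X$ and then pass to the inverse limit over $U$ (ordered by reverse inclusion). So fix such a $U$. Since $X$ is smooth, proper and geometrically connected over $K$, the open curve $U_{\ob K}:=U\times_K\ob K$ is integral over $\ob K$ and $U_{\ob{K_c}}:=U\times_K\ob{K_c}=U_{\ob K}\times_{\ob K}\ob{K_c}$ is integral over $\ob{K_c}$; hence the fundamental exact sequences of $U_c:=U\times_K K_c$ over $K_c$ and of $U$ over $K$ are defined, and the fixed embedding $\ob K\hookrightarrow\ob{K_c}$, which restricts on geometric fibres to $\Spec\ob{K_c}\to\Spec\ob K$, induces a commutative diagram
\[\begin{tikzcd}
1 \arrow{r}{} & \pi_1(U_{\ob{K_c}},\bar\xi_c) \arrow{r}{}\arrow{d}{} & \pi_1(U_c,\xi_c) \arrow{r}{}\arrow{d}{} & G_{K_c} \arrow{r}{}\arrow{d}{} & 1\\
1 \arrow{r}{} & \pi_1(U_{\ob K},\bar\xi) \arrow{r}{} & \pi_1(U,\xi) \arrow{r}{} & G_K \arrow{r}{} & 1
\end{tikzcd}\]
in which the left vertical map is the change-of-base map $\pi_1\!\left(U_{\ob K}\times_{\ob K}\ob{K_c}\right)\to\pi_1(U_{\ob K})$. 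This map is an isomorphism, because the \'etale fundamental group of a variety of finite type over an algebraically closed field of characteristic zero is unchanged under extension of the algebraically closed ground field (this is standard, and is the one genuinely non-formal input). Granting it, a diagram chase of five-lemma type, using exactness of the two rows, shows that the natural map $\pi_1(U_c,\xi_c)\to\pi_1(U,\xi)\times_{G_K}G_{K_c}$ is an isomorphism, i.e. the right-hand square is cartesian.

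Next I would take the inverse limit over the nonempty open $U\subseteq X$. For $U'\subseteq U$ the transition map $\pi_1(U'_c,\xi_c)\to\pi_1(U_c,\xi_c)$ is induced by an open immersion of smooth integral curves, hence is surjective, and likewise for the geometric fundamental groups; so the displayed inverse system of short exact sequences has surjective transition maps, and passing to $\varprojlim$ preserves exactness. By construction $\varprojlim_U\pi_1(U_c,\xi_c)=\pi_1(X_c^{\tr})$, and by Lemma \ref{Gallim} $\varprojlim_U\pi_1(U,\xi)=G_X$. Moreover the opens $U_{\ob K}$ (for $U\subseteq X$ open) are cofinal among all nonempty opens of $X_{\ob K}$: the complement of such an open is a finite set of closed points of $X_{\ob K}$, each mapping to a closed point of $X$, and deleting these finitely many points from $X$ gives a $U$ with $U_{\ob K}$ contained in the given open; hence $\varprojlim_U\pi_1(U_{\ob K},\bar\xi)=G_{X_{\ob K}}$, again by Lemma \ref{Gallim}. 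Since inverse limits commute with fibre products, the finite-level cartesian squares assemble into a cartesian square $\pi_1(X_c^{\tr})\simeq G_X\times_{G_K}G_{K_c}$, and the finite-level left isomorphisms assemble into an isomorphism from $\pi_1(X_{\ob{K_c}}^{\tr}):=\ker\!\left(\pi_1(X_c^{\tr})\twoheadrightarrow G_{K_c}\right)=\varprojlim_U\pi_1(U_{\ob{K_c}},\bar\xi_c)$ onto $G_{X_{\ob K}}$; the middle vertical map is $\varprojlim_U$ of the middle maps and is defined only up to conjugation because of the choices of compatible geometric base points. All the assertions of the lemma follow.

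The point requiring the most care is the invocation of the invariance of the geometric \'etale fundamental group under an extension of the algebraically closed base field in characteristic zero — valid both for the proper curve $X$ and for its affine opens — which supplies the left vertical isomorphism at finite level; the remaining steps (the five-lemma argument, surjectivity of the transition maps and the resulting exactness of $\varprojlim$, and the cofinality of $\{U_{\ob K}\}$ needed to identify $\varprojlim_U\pi_1(U_{\ob K})$ with $G_{X_{\ob K}}$) are routine.
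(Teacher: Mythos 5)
Your proof is correct and follows essentially the same route as the paper: establish the finite‑level diagram for each open $U\subseteq X$, invoke invariance of the geometric fundamental group under extension of the algebraically closed base field (in characteristic zero) to get the left isomorphism (the paper cites Orgogozo–Vidal, Th\'eor\`eme 1.6), deduce that the right square is cartesian from this isomorphism (the paper appeals to the argument of Lemma~\ref{sppullback}), and pass to the inverse limit over $U$. Your version is a little more explicit about the cofinality of $\{U_{\ob K}\}$ among the opens of $X_{\ob K}$ and about why $\varprojlim$ preserves exactness, but these are the same points made implicitly in the paper.
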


\begin{proof}
For each open subset $U \subset X$, functoriality of the fundamental group yields a diagram
\begin{equation}\label{pullbackdiagrampi1u}
\begin{tikzpicture}[descr/.style={fill=white}, baseline=(current  bounding  box.center)]
\matrix(m)[matrix of math nodes,
row sep=3em, column sep=2.5em,
text height=1.5ex, text depth=0.25ex]
{1 & \pi_1(U_{\ob{K_c}},\bar\xi_c) & \pi_1(U_c,\xi_c) & G_{K_c} & 1\\
1 & \pi_1(U_{\ob{K}},\bar\xi) & \pi_1(U,\xi) & G_K & 1\\};
\path[->]
(m-1-1) edge (m-1-2);
\path[->]
(m-1-2) edge (m-1-3) edge (m-2-2);
\path[->]
(m-1-3) edge (m-1-4) edge (m-2-3);
\path[->]
(m-1-4) edge (m-1-5) edge (m-2-4);
\path[->]
(m-2-1) edge (m-2-2);
\path[->]
(m-2-2) edge (m-2-3);
\path[->]
(m-2-3) edge (m-2-4);
\path[->]
(m-2-4) edge (m-2-5);
\end{tikzpicture}
\end{equation}
where the rows are the fundamental exact sequences, the middle vertical map is defined up to conjugation,
the left vertical map is an isomorphism (see \cite{Luminy}, Th\'eor\`eme 1.6),
%\cite{Szamuely}, ``Second proof of Corollary 5.7.6'', and Remark 5.7.8 after it), 
and the right square is cartesian (follows as in the proof of Lemma \ref{sppullback}).  The statement then follows by taking the projective limit of these diagrams over the open subsets of $X$.
\end{proof}

%A section $s:G_K\rightarrow G_X$ therefore pulls back to a section $s_c:G_{K_c}\rightarrow\pi_1(X_c^{\tr})$.

Denote by $\X_c:=\X\times_C k(c)$ the fibre of $\X$ above $c$.  

%Let $\xi'_c : \Spec\Omega'_c \to \X_c$ be a geometric point of $\X_c$ with image the generic point of $\X_c$, and let $\ob{k(c)}$ be the algebraic closure of $k(c)$ in $\Omega'_c$.

\begin{lemma}\label{liftingtoalgpts}
For each closed point $x$ of $\X_c^{\cl}$, there exists an algebraic point $y$ of $X_c$ specialising to $x$ whose residue field is the unique unramified extension $L$ of $K_c$ whose valuation ring $\oh_{L}$ has residue field $k(x)$.
\end{lemma}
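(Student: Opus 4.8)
The plan is to pass from $C$ to the complete discrete valuation ring $R:=\oh_{K_c}=\widehat{\oh_{C,c}}$ (legitimate, since $C$ is a smooth curve, so $\oh_{C,c}$ is a discrete valuation ring) and then to exploit the smoothness of $\X\to C$. First I would base change along $\Spec R\to C$: the scheme $\X_R:=\X\times_C\Spec R$ is a flat, proper, smooth relative curve over $\Spec R$ whose generic fibre is $X_c=X\times_K K_c$ and whose special fibre is $\X_c$, so that the situation is exactly that of Definition~\ref{stilde}. Under this identification a closed point $y$ of $X_c$ specialises to $x$ precisely when $x\in\ob{\{y\}}$ in $\X_R$, and --- since every element of $X(\ob{K})$ has closed image in $X$ (as $\ob{K}|K$ is algebraic) --- the point $y$ is \emph{algebraic} precisely when its image under $X_c\to X$ is a closed point of $X$.

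Next I would reduce the statement to the following geometric assertion: there exists a closed integral curve $D\subset\X$, not contained in the fibre $\X_c$, that is regular at $x$ and meets $\X_c$ transversally at $x$. Granting this, set $Q:=$ the generic point of $D$, so $\ob{\{Q\}}=D$; then $D\to C$ is a dominant proper morphism of irreducible curves, hence finite, so $\kappa(Q)=k(D)$ is finite over $K$ and $Q$ is a closed point of $X$. As $D\hookrightarrow\X$ is a closed immersion, the residue field of $D$ at $x$ equals $\kappa_{\X}(x)=k(x)$; regularity of $D$ at $x$ makes $\oh_{D,x}$ a discrete valuation ring, giving a place $v$ of $\kappa(Q)$ with centre $x$ on $\X$ and residue field $k(x)$, and transversality to $\X_c$ (locally cut out at $x$ by a uniformiser $\pi_c$ of $\oh_{C,c}$, which is also a uniformiser of $R$) says exactly that $\pi_c$ is a uniformiser of $\oh_{D,x}$, i.e.\ $v$ is unramified over $c$. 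Hence the completion of $\kappa(Q)$ at $v$ is the unramified extension of $K_c$ with residue field $k(x)$, namely $L$; so $\kappa(Q)$ is dense in $L$, and since $\kappa(Q)\cdot K_c$ is a finite (hence complete) extension of $K_c$ it equals $L$. The resulting surjection $\kappa(Q)\otimes_K K_c\twoheadrightarrow L$ picks out a closed point $y$ of $X_c=X\times_K K_c$ lying over $Q$, with $\kappa(y)=L$; and because the associated map $\Spec\oh_L\to\X_R$ carries the closed point to $x$, this $y$ specialises to $x$. Thus $y$ is the point sought.

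To construct $D$ I would use the local structure of the smooth morphism $\X\to C$: there is an open $V\ni x$ in $\X$ and an \'etale $C$-morphism $\phi\colon V\to\A^1_C=\Spec\oh_C[t]$. Restricting to closed fibres shows $x':=\phi(x)$ is a closed point of $\A^1_{k(c)}$, cut out by a monic irreducible --- hence, in characteristic zero, separable --- polynomial $\bar g\in k(c)[t]$; I would choose a monic lift $g\in\oh_{C,c}[t]$. Then $(\pi_c,g)$ is the maximal ideal of the regular two-dimensional local ring $\oh_{\A^1_C,x'}$, hence a regular system of parameters there; since $\phi$ is \'etale at $x$ and fixes $\pi_c$, the pair $(\pi_c,\phi^{*}g)$ is a regular system of parameters of $\oh_{\X,x}$. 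Consequently $V(\phi^{*}g)$ is regular at $x$, with residue field $k(x)$ there, and it meets $\X_c=V(\pi_c)$ transversally at $x$ (their scheme-theoretic intersection at $x$ is $\Spec k(x)$). Taking $D$ to be the closure in $\X$ of the reduced irreducible component of $V(\phi^{*}g)$ through $x$ gives a curve with all the required properties; in particular it is not contained in the irreducible curve $\X_c$, as otherwise their intersection at $x$ would be one-dimensional.

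The main obstacle is not the existence of \emph{some} closed point of $X_c$ of residue field $L$ specialising to $x$ --- this follows immediately from the smoothness of $\X_R\to\Spec R$, as noted after Definition~\ref{stilde} --- but rather arranging that such a point be \emph{algebraic}, i.e.\ lie over a closed point of $X$ and not over its generic point, while its residue field stays equal to the prescribed unramified extension $L$ and not something larger; a lift produced by bare smoothness may well be transcendental. It is the transversality of $D$ to $\X_c$ that secures all three conditions simultaneously, and checking it (the residue-field bookkeeping through the \'etale chart and the unramifiedness of $v$) is the technical heart of the argument.
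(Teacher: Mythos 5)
Your proof is correct, but it takes a genuinely different route from the paper's. The paper's proof stays entirely in the local picture: it base-changes to $L$ and $k(x)$ so that $x$ becomes a $k(x)$-rational point $x'$, invokes the parametrisation (Liu, Proposition 10.1.40) identifying the $L$-rational points of $X_{c,L}$ specialising to $x'$ with $\m_L$, then chooses a finite extension $F|K$ with completion $L$ at a place above $c$ and takes the parameter in $\m_L\cap F$; algebraicity of the resulting point is read off from the algebraicity of the chosen coordinate. You instead work globally in $\X$: using the local structure theorem for smooth morphisms you produce a closed integral horizontal curve $D\subset\X$ through $x$, regular at $x$ and transversal to $\X_c$, whose generic point $Q$ is a closed point of $X$; the place of $\kappa(Q)$ with valuation ring $\oh_{D,x}$ is then unramified over $c$ with residue field $k(x)$, and completing gives a closed point $y$ of $X_c$ over $Q$ with $\kappa(y)=L$ specialising to $x$. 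The two arguments buy different things: the paper's is shorter and leans on a formal bijection, while yours makes the source of algebraicity completely explicit as a geometric multisection, avoids the slightly delicate choice of an ``algebraic coordinate'' needed to make the paper's appeal to Liu's bijection produce an algebraic point, and gives a little extra information (the point $y$ lies on an explicit finite cover $D\to C$). Both are valid; yours is more synthetic but also more work.
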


\begin{proof}
Let us write $X_{c,L} := X_c \times_{K_c} L$ and $\X_{c,k(x)} := \X_c \times_{k(c)} k(x)$.  Let $x'$ be a $k(x)$-rational point of $\X_{c,k(x)}$ which maps to $x$ under the projection $\X_{c,k(x)} \to \X_c$, and let $\m_{L}$ denote the maximal ideal of $\oh_{L}$.  The set of $L$-rational points of $X_{c,L}$ which specialise to $x'$ is in bijection with $\m_{L}$ \cite[Proposition 10.1.40]{Liu}.  Let $F$ be a finite extension of $K$ whose completion at a place above $c$ is $L$.  Then an element of $\m_{L} \cap F$ corresponds to an $L$-rational algebraic point $y'$ of $X_{c,L}$ which specialises to $x'$.  The image $y$ of $y'$ under the projection $X_{c,L} \to X_c$ is then an $L$-rational algebraic point of $X_c$ ($k(y)=L$) which specialises to $x$.
\end{proof}

\begin{definition}\label{stildekc}
For each closed point $x$ of $\X_c^{\cl}$, fix a choice of algebraic point $y\in X_c^{\cl}$ which specialises to $x$ and whose residue field is the unique unramified extension of $K_c$ whose valuation ring has residue field $k(x)$ (such a point exists by Lemma \ref{liftingtoalgpts}).  We define $\tilde S_c$ to be the set of these chosen algebraic points $y\in X_c^{\cl}$.  Thus, $\tilde S_c$ is a subset of $X_c^{\cl}$ which consists of algebraic points and which is in bijection with $\X_c^{\cl}$.
We denote by $\tilde S_{c,\ob{K_c}}$ the set of points of $X_c\times _{K_c}\ob {K_c}$ which map to points in $\tilde S_c$.  
Thus $\tilde S_{c,\ob{K_c}}$ is a subset of $X_{\ob{K_c}}^{\cl}$ in bijection with $\X_{\ob{k(c)}}^{\cl}$.  
%The points of $\tilde S_{\ob{K_c}}$ are algebraic points, since they are defined by sections in the image of the composite map $X(\ob{K})\hookrightarrow X_c(\ob{K_c})\simeq X_{\ob{K_c}}(\ob{K_c})$.
\end{definition}

Let $\xi'_c$ be a geometric point of $\X_c$ with value in its generic point, which determines an algebraic closure $\ob{k(c)}$ of $k(c)$ and a geometric point $\bar\xi'_c$ of $\X_{\ob{k(c)}}$.  Let $1 \to  G_{\X_{\ob{k(c)}}} \to G_{\X_c} \to  G_{k(c)} \to 1$ be the exact sequence of the absolute Galois group of $\X_c$ with base point $\xi'_c$.

\begin{theorem}\label{galspeckc}
With $\tilde S_c$ and $\tilde S_{c,\ob{K_c}}$ as in Definition \ref{stildekc}, there exists a surjective homomorphism $\Sp : \pi_1(X_c - \tilde S_c) \twoheadrightarrow G_{\X_c}$, an isomorphism $\ob{\Sp} : \pi_1(X_{\ob{K_c}} - \tilde S_{c,\ob{K_c}}) \simeq G_{\X_{\ob{k(c)}}}$ and a commutative diagram of exact sequences:
\[\begin{tikzpicture}[descr/.style={fill=white}, baseline=(current bounding box.center)]
\matrix(m)[matrix of math nodes,
row sep=3em, column sep=2.5em,
text height=1.5ex, text depth=0.25ex]
{1 & \pi_1(X_{\ob{K_c}}^{\tr}) & \pi_1(X_c^{\tr}) & G_{K_c} & 1\\
1 & \pi_1(X_{\ob{K_c}}-\tilde S_{c,\ob{K_c}}) & \pi_1(X_c-\tilde S_c) & G_{K_c} & 1\\
1 & G_{\X_{\ob{k(c)}}} & G_{\X_c} & G_{k(c)} & 1\\};
\path[->]
(m-1-1) edge (m-1-2);
\path[->]
(m-1-2) edge (m-1-3);
\path[->]
(m-1-3) edge (m-1-4);
\path[->]
(m-1-4) edge (m-1-5);
\path[->>]
(m-1-2) edge (m-2-2);
\path[->>]
(m-1-3) edge (m-2-3);
\path[-]
(m-1-4) edge[double distance=2pt] (m-2-4);
\path[->]
(m-2-1) edge (m-2-2);
\path[->]
(m-2-2) edge (m-2-3);
\path[->]
(m-2-3) edge (m-2-4);
\path[->]
(m-2-4) edge (m-2-5);
\path[->,font=\scriptsize]
(m-2-2) edge node[left]{$\ob{\Sp}$} node[below,rotate=90]{$\sim$} (m-3-2);
\path[->>,font=\scriptsize]
(m-2-3) edge node[left]{$\Sp$} (m-3-3);
\path[->>,font=\scriptsize]
(m-2-4) edge node[left]{} (m-3-4);
\path[->]
(m-3-1) edge (m-3-2);
\path[->]
(m-3-2) edge (m-3-3);
\path[->]
(m-3-3) edge (m-3-4);
\path[->]
(m-3-4) edge (m-3-5);
\end{tikzpicture}\]
where the map $\Sp$ is defined up to conjugation.
\end{theorem}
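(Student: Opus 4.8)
The plan is to deduce this from Theorem \ref{galspec} and Lemma \ref{pullbackgx}. The completion $\oh_{K_c}:=\widehat{\oh_{C,c}}$ of the local ring of $C$ at $c$ is a complete discrete valuation ring with fraction field $K_c$ and residue field $k(c)$, which has characteristic zero; the base change $\X_{\oh_{K_c}}:=\X\times_C\Spec\oh_{K_c}$ is a flat, proper, smooth, geometrically connected relative curve over $\Spec\oh_{K_c}$ whose generic fibre is $X_c$ and whose special fibre is $\X_c$. Hence Theorem \ref{galspec} applies to $\X_{\oh_{K_c}}$. First I would observe that the set $\tilde S_c$ of Definition \ref{stildekc} is a legitimate choice of the set ``$\tilde S$'' of Definition \ref{stilde} for $\X_{\oh_{K_c}}$: by construction and by Lemma \ref{liftingtoalgpts} it consists of exactly one closed point $y\in X_c^{\cl}$ above each $x\in\X_c^{\cl}$, whose residue field is the unramified extension of $K_c$ with residue field $k(x)$, and the additional property that these points be algebraic plays no role for Definition \ref{stilde}. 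Consequently the group $\pi_1(X_c-\tilde S_c)$ considered here is the group $\pi_1(X_K-\tilde S)$ of Definition \ref{pi1stilde} for $\X_{\oh_{K_c}}$, and Theorem \ref{galspec} yields the surjection $\Sp:\pi_1(X_c-\tilde S_c)\twoheadrightarrow G_{\X_c}$ (defined up to conjugation), the isomorphism $\ob\Sp:\pi_1(X_{\ob{K_c}}-\tilde S_{c,\ob{K_c}})\simeq G_{\X_{\ob{k(c)}}}$, and the commutative diagram of exact sequences formed by the bottom two rows of the asserted diagram together with the vertical maps between them.

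It then remains to splice in the top row. Its exactness is Lemma \ref{pullbackgx}, which also realises $\pi_1(X_c^{\tr})$, resp. $\pi_1(X_{\ob{K_c}}^{\tr})$, as $\varprojlim_{U\subset X}\pi_1(U_c,\xi_c)$, resp. $\varprojlim_{U\subset X}\pi_1(U_{\ob{K_c}},\bar\xi_c)$, over the open subschemes $U\subset X$. To produce the vertical surjections onto the middle row I would match the two inverse systems of indices: to a finite subset $B\subset\tilde S_c$ assign the open subscheme $U(B):=X\setminus B_X\subset X$, where $B_X\subset X^{\cl}$ is the finite image of $B$ under the projection $X_c\to X$; then $U(B)_c=X_c\setminus(B_X\times_K K_c)\subseteq X_c\setminus B$, so functoriality of the fundamental group gives surjections $\pi_1(U(B)_c,\xi_c)\twoheadrightarrow\pi_1(X_c\setminus B,\xi_c)$, compatible as $B$ varies, and similarly over $\ob{K_c}$. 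Composing with the projections from the inverse limits produces continuous homomorphisms $\pi_1(X_c^{\tr})\to\pi_1(X_c-\tilde S_c)$ and $\pi_1(X_{\ob{K_c}}^{\tr})\to\pi_1(X_{\ob{K_c}}-\tilde S_{c,\ob{K_c}})$ lying over the identity of $G_{K_c}$. These are surjective: the image of $\pi_1(X_c^{\tr})$ in $\pi_1(X_c-\tilde S_c)=\varprojlim_{B}\pi_1(X_c\setminus B,\xi_c)$ is a closed subgroup which surjects onto each $\pi_1(X_c\setminus B,\xi_c)$ (being the image of the surjection $\pi_1(X_c^{\tr})\twoheadrightarrow\pi_1(U(B)_c,\xi_c)\twoheadrightarrow\pi_1(X_c\setminus B,\xi_c)$), hence equals the whole inverse limit by the standard fact that a closed subgroup of a profinite group surjecting onto every term of its defining system is everything; the same argument works over $\ob{K_c}$. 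Commutativity of the resulting three-row diagram is then immediate from the functoriality of $\pi_1$ and the compatibility of all the specialisation homomorphisms, and $\Sp$ is the already-defined composite $\pi_1(X_c-\tilde S_c)\twoheadrightarrow G_{\X_c}$.

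The proof thus uses no ingredient beyond Theorem \ref{galspec}, Lemma \ref{liftingtoalgpts}, and Lemma \ref{pullbackgx}. The only place I expect genuine (if routine) friction is the bookkeeping around the two different inverse systems of open subschemes --- those of $X$ defined over $K$ versus those obtained by deleting finite subsets of $\tilde S_c\subset X_c^{\cl}$ --- and the verification, via the compactness argument above, that the connecting maps remain surjective after passing to inverse limits.
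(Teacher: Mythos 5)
Your proposal is correct and takes essentially the same route as the paper's proof: apply Theorem \ref{galspec} to the complete local ring $\widehat{\oh_{C,c}}$ to obtain $\Sp$, $\ob{\Sp}$, and the bottom two rows, then use the fact that $\tilde S_c$ consists of algebraic points to realise $\pi_1(X_c-\tilde S_c)$ and $\pi_1(X_{\ob{K_c}}-\tilde S_{c,\ob{K_c}})$ as quotients of $\pi_1(X_c^{\tr})$ and $\pi_1(X_{\ob{K_c}}^{\tr})$ respectively. The paper states this last step in one sentence (``naturally a quotient \ldots clearly commutative''); you have simply made explicit the cofinality bookkeeping and the surjectivity-via-compactness argument that the paper leaves implicit.
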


\begin{proof}
The existence of $\ob{\Sp}$ and $\Sp$ follows from Theorem \ref{galspec}.  Since we have chosen $\tilde S_c$ to consist of algebraic points, $\pi_1(X_c-\tilde S_c)$, respectively $\pi_1(X_{\ob{K_c}} - \tilde S_{c,\ob{K_c}})$ is naturally a quotient of $\pi_1(X_c^{\tr})$, resp. $\pi_1(X_{\ob{K_c}}^{\tr})$.  Thus we have the required homomorphisms in the diagram, and it is clearly commutative.
\end{proof}

Let $s:G_K\rightarrow G_X$ be a section, and let $s_c:G_{K_c}\rightarrow\pi_1(X_c^{\tr})$ be the section of $\pi_1(X_c^{\tr})$ induced from $s$ (cf. Lemma \ref{pullbackgx}).  This naturally induces a section $\tilde s_c:G_{K_c}\rightarrow\pi_1(X_c-\tilde S_c)$ of the projection $\pi_1(X_c-\tilde S_c)\twoheadrightarrow G_{K_c}$. 

\begin{theorem}\label{stildeptth}
With the notation of the above paragraph, assume further that $X$ is hyperbolic.  Let $\tilde X_{c, \tilde S_c} \to X_c - \tilde S_c$ be a universal pro-\'etale cover (Definition \ref{universalcovertilde}), and denote by $\overline {k(\X_c)}$ the separable closure of the function field of $\X_c$ determined by the geometric point $\xi'_c$.
\begin{enumerate}
\item Suppose $\tilde s_c$ is unramified and induces a section $\bar s_c:G_{k(c)}\rightarrow G_{\X_c}$.  If $\bar s_c$ is geometric with $\bar s_c(G_{k(c)})\subset D_{\tilde x}$ for some $x\in\X_c(k(c))$ and some extension $\tilde x$ of $x$ to $\overline {k(\X_c)}$, then $\tilde s_c(G_{K_c})\subset D_{\tilde y}$ for some unique $\tilde y$ in $\tilde X_{c, \tilde S_c}$ above the unique ($K_c$-rational) point $y$ of $\tilde S_c$ specialising to $x$.
\item Assume further that $k$ strongly satisfies condition (ii) of Definition \ref{conditions}.  Suppose $\tilde s_c$ is ramified, and denote $\varphi_s:=\Sp\circ\tilde s_c$.  Then $\varphi_s(G_{K_c})\subset D_{\tilde x}$ for a unique valuation on $\overline {k(\X_c)}$ extending a $k(c)$-rational point $x$ of $\X_c$, and $\tilde s_c(G_{K_c})\subset D_{\tilde y}$ for some unique $\tilde y$ in $\tilde X_{c, \tilde S_c}$ above the unique ($K_c$-rational) point $y$ of $\tilde S_c$ specialising to $x$.
\end{enumerate}
\end{theorem}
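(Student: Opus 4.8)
The plan is to recognise this as an instance of the purely local situation of \S\ref{spechom}--\S\ref{ramificationofsections}, so that both assertions become immediate consequences of Lemmas \ref{galptthunramified} and \ref{galptthramified}. To this end I would set $R:=\oh_{K_c}$, the valuation ring of $K_c$: it is a complete discrete valuation ring with fraction field $K_c$ and residue field $k(c)$, and $k(c)$ has characteristic zero because it is a finite extension of $k$. Writing $\X_R := \X \times_C \Spec R$ (for the canonical morphism $\Spec R \to C$ factoring through the local ring of $C$ at $c$), this is a flat, proper, smooth relative curve over $\Spec R$ whose generic fibre is $X_c = X \times_K K_c$ and whose special fibre is the fibre $\X_c = \X \times_C k(c)$. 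Since the genus is locally constant in the smooth proper family $\X \to C$ and $C$ is connected, $\X_c$ has the same genus as $X$, hence $\X_c$ is hyperbolic; this is exactly the standing hypothesis imposed at the beginning of \S\ref{ramificationofsections}. Finally, by Lemma \ref{liftingtoalgpts} the set $\tilde S_c$ of Definition \ref{stildekc} is a permissible choice of the set ``$\tilde S$'' of Definition \ref{stilde} for the $R$-curve $\X_R$ (the extra demand in Definition \ref{stildekc} that its points be algebraic plays no role in the constructions of \S\ref{spechom}--\S\ref{ramificationofsections}); consequently $\pi_1(X_c - \tilde S_c)$ is the group $\pi_1(X_K - \tilde S)$ of Definition \ref{pi1stilde} for $\X_R$, and, by Theorem \ref{galspeckc}, the homomorphism $\Sp$ of that theorem is the specialisation homomorphism of Theorem \ref{galspec}.

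Granting these identifications, $\tilde s_c : G_{K_c} \to \pi_1(X_c - \tilde S_c)$ is exactly a section of the type treated in \S\ref{ramificationofsections}, with base field $k(c)$. For (i), where $\tilde s_c$ is unramified and its specialisation $\bar s_c : G_{k(c)} \to G_{\X_c}$ is geometric with $\bar s_c(G_{k(c)}) \subset D_{\tilde x}$, I would apply Lemma \ref{galptthunramified} verbatim to obtain $\tilde s_c(G_{K_c}) \subset D_{\tilde y}$ for the unique $\tilde y$ in $\tilde X_{c, \tilde S_c}$ above the unique point $y$ of $\tilde S_c$ specialising to $x$. For (ii): since $k$ strongly satisfies condition (ii) of Definition \ref{conditions}, the finite extension $k(c)|k$ satisfies condition (ii), so Lemma \ref{galptthramified} applies with base field $k(c)$ to the ramified section $\tilde s_c$, yielding that $\varphi_s(G_{K_c}) \subset D_{\tilde x}$ for a unique valuation $\tilde x$ of $\overline{k(\X_c)}$ extending a $k(c)$-rational point $x$ of $\X_c$, and $\tilde s_c(G_{K_c}) \subset D_{\tilde y}$ for the corresponding unique $\tilde y$.

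All the substance is already contained in Lemmas \ref{galptthunramified} and \ref{galptthramified}, so the only genuine work is the bookkeeping in the first paragraph: one must verify that the ``transcendental point'' formalism of \S\ref{specialisationofsections} --- the groups $\pi_1(X_c^{\tr})$ and $\pi_1(X_c - \tilde S_c)$, and the section $\tilde s_c$ extracted from $s$ through the cartesian square of Lemma \ref{pullbackgx} --- coincides, under the identification $R = \oh_{K_c}$, with the inverse-limit constructions of \S\ref{spechom}, and that the geometric base points and universal pro-\'etale covers on the two sides have been chosen compatibly. I expect this compatibility check to be the only (mildly) delicate point; once it is settled there is no further obstacle.
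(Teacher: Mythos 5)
Your proposal is correct and takes essentially the same route as the paper: the paper's own proof is a one-line appeal to Lemmas \ref{galptthunramified} and \ref{galptthramified}, exactly as you do, with the translation into the local setting already carried out by Theorem \ref{galspeckc} and the discussion surrounding Definition \ref{stildekc}. Your first paragraph simply spells out in greater detail the bookkeeping (identification of $R=\oh_{K_c}$, preservation of hyperbolicity in the family, $\tilde S_c$ being a valid choice of $\tilde S$) that the paper leaves implicit, which is a reasonable amount of caution but not a different argument.
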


\begin{proof}
Part (i) follows from Lemma \ref{galptthunramified}, and (ii) follows from Lemma \ref{galptthramified}.
\end{proof}

\subsection{\'Etale abelian sections}\label{etaleabeliansections}

We use the notation of \S \ref{specialisationofsections}, and hereafter we assume that $X$ is hyperbolic and that $X(K)\ne\emptyset$.

\begin{lemma}
For each $c \in C^{\cl}$, there is a commutative diagram
\begin{equation}\label{specialisationofset}
\begin{tikzpicture}[descr/.style={fill=white}, baseline=(current  bounding  box.center)]
\matrix(m)[matrix of math nodes,
row sep=3em, column sep=2.5em,
text height=1.5ex, text depth=0.25ex]
{1 & \pi_1(X_{\ob{K}}, \bar\xi) & \pi_1(X, \xi) & G_K & 1\\
1 & \pi_1(X_{\ob{K}}, \bar\xi) & \pi_1(\X, \xi) & \pi_1(C, \xi) & 1\\
1 & \pi_1(\X_{\ob{k(c)}}, \bar\xi'_c) & \pi_1(\X_c, \xi'_c) & G_{k(c)} & 1\\};
\path[->]
(m-1-1) edge (m-1-2);
\path[->]
(m-1-2) edge (m-1-3);
\path[->]
(m-1-3) edge (m-1-4);
\path[->, font = \scriptsize]
(m-1-4) edge (m-1-5);
\path[-]
(m-1-2) edge[double distance = 2pt] (m-2-2);
\path[->>]
(m-1-3) edge (m-2-3);
\path[->>]
(m-1-4) edge (m-2-4);
\path[->]
(m-2-1) edge (m-2-2);
\path[->]
(m-2-2) edge (m-2-3);
\path[->]
(m-2-3) edge (m-2-4);
\path[->, font = \scriptsize]
(m-2-4) edge (m-2-5);
\path[->]
(m-3-1) edge (m-3-2);
\path[->, font = \scriptsize]
(m-3-2) edge (m-3-3) edge node[below, rotate = 90]{$\sim$} (m-2-2);
\path[->]
(m-3-3) edge (m-3-4);
\path[right hook->]
(m-3-3) edge (m-2-3);
\path[->, font = \scriptsize]
(m-3-4) edge (m-3-5);
\path[right hook->]
(m-3-4) edge (m-2-4);
\end{tikzpicture}
\end{equation}
where the lower vertical homomorphisms are defined up to conjugation, the middle and right of those maps are injective and the left one is an isomorphism.
\end{lemma}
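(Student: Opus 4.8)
The plan is to build the diagram (\ref{specialisationofset}) from three standard exact sequences of \'etale fundamental groups and two functorial families of maps between them; the only non-formal ingredient will be the homotopy exact sequence of the proper smooth morphism $\X\to C$ together with the specialisation isomorphism for the fundamental groups of its geometric fibres, which is available because we are in characteristic zero. The top row is the fundamental exact sequence of the smooth, projective, geometrically connected curve $X$ over $K$ (exact in general, and a fortiori here since $X(K)\ne\emptyset$). For the bottom row, $\X_c$ is smooth and projective over $k(c)$ as a fibre of the proper smooth morphism $\X\to C$, and it is geometrically connected: $\X$ is regular (smooth over the regular curve $C$) and connected — any connected component of $\X$ is open, closed, proper and flat over $C$, hence surjects onto $C$ and so contains the generic fibre $X$, which contains every generic point of $\X$ — so $\X$ is irreducible, the Stein factorisation of the proper morphism $\X\to C$ is a finite birational morphism onto the normal curve $C$, i.e.\ the identity, hence $f_*\oh_\X=\oh_C$ universally and every fibre of $\X\to C$ is geometrically connected. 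Thus the bottom row is the fundamental exact sequence of $\X_c$ over $k(c)$.

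The middle row is the homotopy exact sequence of the proper smooth morphism $\X\to C$ with geometrically connected fibres, $1\to\pi_1(\X_{\bar\eta},\bar\xi)\to\pi_1(\X,\xi)\to\pi_1(C,\xi)\to1$ where $\eta=\Spec K$ is the generic point of $C$; left-exactness follows from the properness and smoothness of $\X\to C$ (SGA~1, Exp.~X and XIII; see also \cite{Luminy}), and alternatively from the existence of a section, which $X(K)\ne\emptyset$ provides since a rational point of $X=\X_K$ extends to a section $C\to\X$ by the valuative criterion of properness over the Dedekind curve $C$. Since $\eta=\Spec K$, one has $\X_{\bar\eta}=\X\times_C\Spec\ob K=X\times_K\ob K=X_{\ob K}$ with the same base point $\bar\xi$, so the geometric term of the middle row is literally $\pi_1(X_{\ob K},\bar\xi)$, agreeing with the top row.

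Next come the vertical maps. The generic fibre $X=\X\times_C\eta$ is a dense open subscheme of the irreducible scheme $\X$ (it contains every generic point of $\X$), and the open immersion $X\hookrightarrow\X$ lies over $\eta\hookrightarrow C$, so functoriality of $\pi_1$ yields the commuting squares between the top and middle rows, the induced map on geometric fundamental groups being the identity of $\pi_1(X_{\ob K},\bar\xi)$ by the identification above. The maps $\pi_1(X,\xi)\to\pi_1(\X,\xi)$ and $G_K=\pi_1(\Spec K)\to\pi_1(C,\xi)$ are surjective: a connected finite \'etale cover of the irreducible scheme $\X$ is again irreducible, hence restricts over the dense open $X$ to a connected cover, so connected covers pull back to connected covers and $\pi_1(X)\to\pi_1(\X)$ is onto; the same argument over $C$ with its generic point $\Spec K$ gives the other surjection. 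For the lower vertical maps, the closed immersion $\X_c=\X\times_C\Spec k(c)\hookrightarrow\X$ lies over $c=\Spec k(c)\hookrightarrow C$; after fixing an \'etale path from $\bar\xi'_c$ to $\bar\xi$ in $\X$ (this is the source of the ``up to conjugation'' indeterminacy, and it may be chosen compatibly with a specialisation $\bar c\rightsquigarrow\bar\eta$ in $C$), functoriality gives the commuting squares between the bottom and middle rows. The left vertical map $\pi_1(\X_{\ob{k(c)}},\bar\xi'_c)\to\pi_1(X_{\ob K},\bar\xi)$ is the specialisation morphism on the fundamental groups of the geometric fibres of $\X\to C$ at $c$, which is an isomorphism because $\X\to C$ is proper and smooth and all residue characteristics are zero (\cite{Luminy}; SGA~1, Exp.~X, XIII). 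The right vertical map $G_{k(c)}\to\pi_1(C,\xi)$ is injective: composing with the map $\pi_1(C)\to\pi_1(\Spec k)=G_k$ induced by the structure morphism $C\to\Spec k$ gives the map $\pi_1(\Spec k(c))\to\pi_1(\Spec k)$ induced by $k\hookrightarrow k(c)$, i.e.\ the inclusion of the open subgroup $G_{k(c)}\le G_k$ ($k(c)/k$ being finite and, in characteristic zero, separable), which is injective, so $G_{k(c)}\to\pi_1(C)$ is injective as well. Finally the middle vertical map $\pi_1(\X_c,\xi'_c)\to\pi_1(\X,\xi)$ is injective by a diagram chase: an element of its kernel maps to an element of $G_{k(c)}$ that dies in $\pi_1(C)$, hence is trivial by the previous point, so it lies in $\pi_1(\X_{\ob{k(c)}})$ by exactness of the bottom row; but the left vertical map is injective and the geometric term of the middle row injects into $\pi_1(\X,\xi)$ by left-exactness, so the element is trivial. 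Commutativity of the whole diagram is then functoriality, once the base points and the \'etale path are fixed.

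The one non-formal point — hence the main obstacle — is the isomorphy of the left vertical map, i.e.\ the ``local constancy'' of the fundamental groups of the geometric fibres of the proper smooth family $\X\to C$, equivalently the left-exactness of the homotopy sequence forming the middle row; this is exactly where characteristic zero is essential (no wild ramification to obstruct specialisation), and it is the analogue in the present proper setting of the isomorphy of $\ob{\Sp}$ in Theorems \ref{galspec} and \ref{galspeckc}. Everything else reduces to this together with the density/irreducibility argument for the two upper surjections, the factorisation through $\Spec k$ for the injectivity of $G_{k(c)}\to\pi_1(C)$, and routine diagram chases.
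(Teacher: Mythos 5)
Your proof is correct and takes essentially the same route as the paper's — exactness of the middle row from SGA~1 (Exp.~XIII, Prop.~4.3) plus functoriality of $\pi_1$ for the vertical maps. You simply make explicit what the paper's two-line proof leaves implicit: the geometric connectedness of $\X_c$ (via Stein factorisation), the surjectivity of the two upper vertical maps, the injectivity of the two lower ones by a diagram chase, and the identification of the lower-left map as (the inverse of) the specialisation isomorphism for the proper smooth family $\X\to C$ in residue characteristic zero.
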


\begin{proof}
This follows from functoriality of the fundamental group and the fundamental exact sequences for $X$ and $\X_c$.  Exactness of the middle row follows from \cite[Expos\'e XIII, Proposition 4.3]{SGA}.
%...noting that there exists a section of the morphism $\X \to C$ since $X(K) = \X(C)$ is non-empty by assumption.
\end{proof}

For the remainder of this section we assume that $k$ strongly satisfies the condition (ii) in Definition 1.4.
Let us fix a section $s:G_K\rightarrow G_X$ of $G_X$, and denote by $s^{\et}:G_K\rightarrow\pi_1(X,\xi)$ the section of the \'etale fundamental group of $X$ induced by $s$.  By Lemma \ref{pullbackgx}, $s$ pulls back to a section $s_c:G_{K_c}\rightarrow\pi_1(X_c^{\tr})$, which in turn induces a section $s_c^{\et}:G_{K_c}\rightarrow\pi_1(X_c,\xi_c)$.  By Proposition \ref{weightargument} (see also Remark \ref{properimpliesunramified}), $s_c^{\et}$ specialises to a section $\bar s_c^{\et}:G_{k(c)}\rightarrow\pi_1(\X_c,\xi'_c)$.  By the following, we may also consider $\bar s_c^{\et}$ the specialisation of $s^{\et}$.

\begin{lemma}
The section $s^{\et}$ extends to a section $s_C^{\et} : \pi_1(C, \xi) \to \pi_1(\X, \xi)$ of the projection $\pi_1(\X, \xi) \twoheadrightarrow \pi_1(C, \xi)$ which restricts to the section $\bar s_c^{\et}$ for each $c \in C^{\cl}$.
\end{lemma}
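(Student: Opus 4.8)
The plan is to construct $s_C^{\et}$ by descending the composite $\bar s:=q\circ s^{\et}\colon G_K\to\pi_1(\X,\xi)$, where $q\colon\pi_1(X,\xi)\twoheadrightarrow\pi_1(\X,\xi)$ is the middle vertical surjection of diagram~(\ref{specialisationofset}), along the projection $p\colon G_K\twoheadrightarrow\pi_1(C,\xi)$. Since $\pi_1(C,\xi)$ classifies the finite \'etale covers of $C$, and the normalisation of $C$ in a finite extension of $K$ is finite \'etale over $C$ precisely when it is unramified over every closed point of $C$, the kernel $N:=\ker p$ is the closed normal subgroup of $G_K$ topologically generated by the inertia subgroups $I_c\subset G_K$, $c\in C^{\cl}$. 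As $\ker q$ is normal in $\pi_1(X,\xi)$, it therefore suffices to prove $\bar s(I_c)=1$ for every $c$: granting this, $\bar s$ factors as $\bar s=s_C^{\et}\circ p$, and $s_C^{\et}$ is a section of $\pi_1(\X,\xi)\twoheadrightarrow\pi_1(C,\xi)$ because the composite of $\bar s$ with that projection is $p$, which is surjective.

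Fix $c\in C^{\cl}$, let $R_c$ be the completion of $\oh_{C,c}$, and form the flat proper smooth relative curve $\X_{R_c}:=\X\times_C\Spec R_c\to\Spec R_c$, with generic fibre $X_c=X\times_K K_c$ and special fibre $\X_c$. By the cartesian square of Lemma~\ref{pullbackgx}, the restriction of $s^{\et}$ to a decomposition group $D_c\cong G_{K_c}$ is the induced section $s_c^{\et}\colon G_{K_c}\to\pi_1(X_c,\xi_c)$ followed by the natural map $\pi_1(X_c,\xi_c)\to\pi_1(X,\xi)$, and $I_c\subset D_c$ corresponds to the inertia subgroup $I_{K_c}\subset G_{K_c}$; hence $\bar s(I_c)$ is the image of $s_c^{\et}(I_{K_c})$ under $\pi_1(X_c,\xi_c)\to\pi_1(X,\xi)\xrightarrow{q}\pi_1(\X,\xi)$. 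Since $\Spec K_c\to C$ factors through $\Spec R_c$, the morphism $X_c\to\X$ factors as $X_c\to\X_{R_c}\to\X$, so this arrow factors through $\pi_1(\X_{R_c})$; and as $\X_{R_c}\to\Spec R_c$ is proper, the restriction functor from finite \'etale covers of $\X_{R_c}$ to those of $\X_c$ is an equivalence, giving an isomorphism $\pi_1(\X_c,\xi'_c)\xrightarrow{\sim}\pi_1(\X_{R_c})$ that turns $\pi_1(\X_{R_c})\to\pi_1(\X,\xi)$ into the injection $\pi_1(\X_c,\xi'_c)\hookrightarrow\pi_1(\X,\xi)$ of diagram~(\ref{specialisationofset}) and, by the construction of the specialisation homomorphism (the case $S=\emptyset$ of Lemma~\ref{specialisationaffine}, cf.~\cite{Luminy}), turns $\pi_1(X_c,\xi_c)\to\pi_1(\X_{R_c})$ into $\Sp\colon\pi_1(X_c,\xi_c)\to\pi_1(\X_c,\xi'_c)$. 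Thus $\bar s(I_c)$ is the image under that injection of $(\Sp\circ s_c^{\et})(I_{K_c})$. But $\X_c$ is hyperbolic, being a proper smooth curve of the same genus $\ge 2$ as $X$, and $k(c)$ satisfies condition~(ii) of Definition~\ref{conditions} because $k$ strongly satisfies it, so by Remark~\ref{properimpliesunramified} the section $s_c^{\et}$ is unramified, i.e.~$(\Sp\circ s_c^{\et})(I_{K_c})=1$; therefore $\bar s(I_c)=1$, and $s_C^{\et}$ exists.

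To see that $s_C^{\et}$ restricts to $\bar s_c^{\et}$, one must check that $s_C^{\et}$ composed with the injection $G_{k(c)}\hookrightarrow\pi_1(C,\xi)$ of diagram~(\ref{specialisationofset}) equals $\bar s_c^{\et}$ composed with $\pi_1(\X_c,\xi'_c)\hookrightarrow\pi_1(\X,\xi)$. Restricting $p$ to $D_c$ and using $I_c\subset N$, the map $D_c\to\pi_1(C,\xi)$ factors as $D_c\twoheadrightarrow D_c/I_c\cong G_{k(c)}\hookrightarrow\pi_1(C,\xi)$; combining this with the identifications of the second paragraph applied to all of $s_c^{\et}$, and with the fact that $\Sp\circ s_c^{\et}$ factors through $D_c/I_c\cong G_{k(c)}$ as $\bar s_c^{\et}$ (unramifiedness), one obtains that the two maps $G_{k(c)}\to\pi_1(\X,\xi)$ in question coincide after precomposition with the surjection $D_c\twoheadrightarrow G_{k(c)}$, hence coincide. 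The point I expect to require the most care is the compatibility used in the second paragraph: that the specialisation homomorphism of Lemma~\ref{specialisationaffine}, for a proper relative curve over a complete discrete valuation ring, is exactly the generic-fibre map to the fundamental group of the total space composed with the inverse of the special-fibre isomorphism, and that the decomposition group $D_c\subset G_K$, the middle vertical map of Lemma~\ref{pullbackgx}, the injections of diagram~(\ref{specialisationofset}) and all base points can be chosen compatibly despite the conjugation ambiguities built into $\Sp$ and into that vertical map.
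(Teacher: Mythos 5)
Your proposal is correct and takes essentially the same approach as the paper: both identify the kernel of $G_K \twoheadrightarrow \pi_1(C,\xi)$ as the closed normal subgroup generated by the inertia subgroups at closed points of $C$, and both kill these by invoking the unramifiedness of $s_c^{\et}$ (Remark~\ref{properimpliesunramified}), so that $q\circ s^{\et}$ descends. You spell out in more detail the compatibility between $\Sp_X$, the middle vertical map of Lemma~\ref{pullbackgx}, and the injections of diagram~(\ref{specialisationofset}), and you flesh out the ``must restrict to $\bar s_c^{\et}$'' claim that the paper leaves to the reader, but the underlying argument is the same.
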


\begin{proof}
The kernel of the homomorphism $G_K\twoheadrightarrow\pi_1(C, \xi)$ is the inertia group $I_C$ normally generated by the inertia subgroups associated to the closed points of $C$.  Since $s_c^{\et}$ is unramified for every $c \in C^{\cl}$, the image of each of these inertia groups under the composition $\Sp_X \circ s_c^{\et} : G_{K_c} \to \pi_1(\X_c, \xi'_c)$ of $s_c^{\et}$ with the specialisation homomorphism $\Sp_X : \pi_1(X_c, \xi_c) \twoheadrightarrow \pi_1(\X_c, \xi'_c)$ is trivial, hence the image of $I_C$ under the composite $\begin{tikzcd}[column sep=small]G_C \arrow{r}{s^{\et}} & \pi_1(X, \xi) \arrow[twoheadrightarrow]{r}{} & \pi_1(\X, \xi)\end{tikzcd}$ is trivial.  Thus $s^{\et}$ extends to a section $s_C^{\et}:\pi_1(C, \xi) \rightarrow \pi_1(\X, \xi)$, which must restrict to $\bar s_c^{\et}:G_{k(c)}\rightarrow\pi_1(\X_c, \xi'_c)$.
\end{proof}

Let $\J:=\textup{Pic}^0_{\X/C}\rightarrow C$ denote the relative Jacobian of $\X$, and $J:=\J_K$ the Jacobian of $X$.  For each closed point $c\in C^{\cl}$, let $J_c:=J_{K_c}$ denote the Jacobian of $X_c$ and $\J_c:=\J_{k(c)}$ that of $\X_c$.  The above sections $s^{\et}$, $s_c^{\et}$ and $\bar s_c^{\et}$ induce \'etale abelian sections $s^{\ab}$, $s_c^{\ab}$ and $\bar s_c^{\ab}$ respectively, while diagram (\ref{specialisationofset}) induces a commutative diagram of exact sequences of geometrically abelian fundamental groups
\begin{equation}\label{specialisationofsab}
\begin{tikzpicture}[descr/.style={fill=white}, baseline=(current  bounding  box.center)]
\matrix(m)[matrix of math nodes,
row sep=3em, column sep=2.5em,
text height=1.5ex, text depth=0.25ex]
{1 & \pi_1(X_{\ob{K}}, \bar\xi)^{\ab} & \pi_1(X, \xi)^{(\ab)} & G_K = G_C & 1\\
1 & \pi_1(X_{\ob{K}}, \bar\xi)^{\ab} & \pi_1(\X, \xi)^{(\ab)} & \pi_1(C, \xi) & 1\\
1 & \pi_1(\X_{\ob{k(c)}}, \bar\xi'_c)^{\ab} & \pi_1(\X_c, \xi'_c)^{(\ab)} & G_{k(c)} & 1\\};
\path[->]
(m-1-1) edge (m-1-2);
\path[->]
(m-1-2) edge (m-1-3);
\path[->]
(m-1-3) edge (m-1-4);
\path[->, font = \scriptsize]
(m-1-4) edge (m-1-5) edge[out = 165, in = 15] node[above]{$s^{\ab}$} (m-1-3);
\path[-]
(m-1-2) edge[double distance = 2pt] (m-2-2);
\path[->>]
(m-1-3) edge (m-2-3);
\path[->>]
(m-1-4) edge (m-2-4);
\path[->]
(m-2-1) edge (m-2-2);
\path[->]
(m-2-2) edge (m-2-3);
\path[->]
(m-2-3) edge (m-2-4);
\path[->, font = \scriptsize]
(m-2-4) edge (m-2-5) edge[out = 165, in = 15] node[above]{$s_C^{\ab}$} (m-2-3);
\path[->]
(m-3-1) edge (m-3-2);
\path[->, font = \scriptsize]
(m-3-2) edge (m-3-3) edge node[below, rotate = 90]{$\sim$} (m-2-2);
\path[->]
(m-3-3) edge (m-3-4);
\path[right hook->]
(m-3-3) edge (m-2-3);
\path[->, font = \scriptsize]
(m-3-4) edge (m-3-5) edge[out = 160, in = 15] node[above]{$\bar s_c^{\ab}$} (m-3-3);
\path[right hook->]
(m-3-4) edge (m-2-4);
\end{tikzpicture}
\end{equation}
where the middle horizontal row is obtained as the push-out of the middle horizontal row in diagram (9) by the projection 
$\pi_1(X_{\ob{K}},\bar\xi)\twoheadrightarrow \pi_1(X_{\ob{K}},\bar \xi)^{\ab}$,
and $s_C^{\ab} : \pi_1(C, \xi) \to \pi_1(\X, \xi)^{(\ab)}$ is induced by $s_C^{\et}$.  Since $X(K) \ne \emptyset$ by assumption, the \'etale abelian sections $s^{\ab}$, $s_c^{\ab}$, $s_C^{\ab}$ and $\bar s_c^{\ab}$ correspond to elements of the cohomology groups $H^1(G_K,TJ)$, $H^1(G_{K_c},TJ_c)$, $H^1(\pi_1(C, \xi), TJ)$ and $H^1(G_{k(c)},T\J_c)$ respectively, which are related by the following restriction and inflation maps:

\[\begin{tikzpicture}[descr/.style={fill=white}]
\matrix(m)[matrix of math nodes,
row sep=3em, column sep=2.5em,
text height=2ex, text depth=0.25ex]
{H^1(G_K,TJ) & H^1(G_{K_c},TJ_c)\\
H^1(\pi_1(C, \xi), TJ) & H^1(G_{k(c)},T\J_c)\\};
\path[->,font=\scriptsize]
(m-1-1) edge node[above]{$\res_c$} (m-1-2);
\path[->, font = \scriptsize]
(m-2-1) edge node[left]{$\inf_C$} (m-1-1) edge node[above]{$\res_{C, c}$} (m-2-2);
\path[->, font=\scriptsize]
(m-2-2) edge node[right]{$\inf_c$} (m-1-2);
\end{tikzpicture}\]

\begin{lemma}\label{sabresinf}
With the above notation, we have the following.
\begin{enumerate}
\item $\res_c(s^{\ab}) = \inf_c(\bar s_c^{\ab}) = s_c^{\ab}$;
\item $\inf_C(s_C^{\ab}) = s^{\ab}$ and $\res_{C, c}(s_C^{\ab}) = \bar s_c^{\ab}$.
\end{enumerate}
\end{lemma}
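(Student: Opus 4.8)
The plan is to deduce all four equalities of the lemma from a single elementary principle about sections of group extensions, applied to morphisms of extensions that are essentially already constructed in the text. First I would record the principle. Suppose given a commutative diagram of extensions of profinite groups with abelian kernels, a top row $1\to A\to E\to G\to 1$ and a bottom row $1\to A'\to E'\to G'\to 1$, joined by homomorphisms $\alpha\colon A\to A'$, $\beta\colon E\to E'$, $\gamma\colon G\to G'$, with $\alpha$ an isomorphism intertwining the $G$-action on $A$ with the $G$-action on $A'$ obtained by inflation along $\gamma$; suppose moreover $\sigma\colon G\to E$ and $\sigma'\colon G'\to E'$ are continuous sections with $\beta\circ\sigma=\sigma'\circ\gamma$. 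Then, after trivialising the two torsors of sections by base sections that are themselves matched by $(\alpha,\beta,\gamma)$, the associated classes satisfy $[\sigma]=(\alpha^{-1})_*\,\gamma^*[\sigma']$ in $H^1(G,A)$; this is immediate by comparing cocycles. To trivialise the torsors compatibly throughout, I would fix a rational point $x_0\in X(K)$ (which exists by hypothesis), extend it to a section $x_0\colon C\to\X$ by properness of $\X\to C$, and use $x_0$, its base change $x_0\in X_c(K_c)$ and its specialisation $x_0(c)\in\X_c(k(c))$ as the base points for the bijections of \S2.2 with $H^1(G_K,TJ)$, $H^1(G_{K_c},TJ_c)$, $H^1(\pi_1(C,\xi),TJ)$ and $H^1(G_{k(c)},T\J_c)$. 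With these choices the four operators $\res_c$, $\inf_c$, $\inf_C$, $\res_{C,c}$ displayed before the lemma are exactly the maps $(\alpha^{-1})_*\gamma^*$ produced by the principle, and it remains only to identify the relevant morphism of extensions in each case.

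These morphisms are at hand. For $\res_c(s^{\ab})=s_c^{\ab}$, I would use Lemma \ref{pullbackgx} together with the invariance of the geometric fundamental group under extension of the algebraically closed base field: the geometrically abelian extension of $X_c$ over $G_{K_c}$, carrying $s_c^{\ab}$, is the pullback along $G_{K_c}\hookrightarrow G_K$ of that of $X$ over $G_K$, carrying $s^{\ab}$, with $\alpha$ the canonical identification $\pi_1(X_{\ob{K_c}},\bar\xi_c)^{\ab}=TJ_c=TJ$ of $G_{K_c}$-modules. For $\inf_c(\bar s_c^{\ab})=s_c^{\ab}$, I would invoke Theorem \ref{galspeckc} and Remark \ref{properimpliesunramified} (applicable since $X$ is proper over $K_c$), by which $s_c^{\et}$, hence $s_c^{\ab}$, is unramified; the specialisation homomorphism then carries the extension of $X_c$ with section $s_c^{\ab}$ onto the bottom row of diagram (\ref{specialisationofsab}) with section $\bar s_c^{\ab}$, compatibly with $G_{K_c}\twoheadrightarrow G_{k(c)}$, and $\alpha$ is the specialisation isomorphism $TJ_c\simeq T\J_c$, which is $G_{K_c}$-equivariant precisely because good reduction makes $TJ_c$ an unramified $G_{K_c}$-module. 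For $\inf_C(s_C^{\ab})=s^{\ab}$, I would read off from the construction of $s_C^{\et}$ (the lemma just preceding diagram (\ref{specialisationofsab})) that the composite $G_K\xrightarrow{s^{\et}}\pi_1(X,\xi)\twoheadrightarrow\pi_1(\X,\xi)$ equals $s_C^{\et}$ precomposed with $G_K\twoheadrightarrow\pi_1(C,\xi)$; on geometrically abelian quotients this is exactly the statement that the morphism from the top to the middle row of diagram (\ref{specialisationofsab}) intertwines $s^{\ab}$ and $s_C^{\ab}$ over $G_K\twoheadrightarrow\pi_1(C,\xi)$, with $\alpha$ the identity on $TJ$. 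Finally, for $\res_{C,c}(s_C^{\ab})=\bar s_c^{\ab}$, I would use the other conclusion of that lemma, that $s_C^{\et}$ restricts along the decomposition subgroup $G_{k(c)}\hookrightarrow\pi_1(C,\xi)$ to $\bar s_c^{\et}$, and apply the principle to the morphism from the bottom to the middle row of diagram (\ref{specialisationofsab}), with $\alpha$ the left-hand vertical isomorphism $\pi_1(\X_{\ob{k(c)}},\bar\xi'_c)^{\ab}\simeq\pi_1(X_{\ob{K}},\bar\xi)^{\ab}$ occurring there.

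The genuine content, and the place I expect to spend real care, is bookkeeping of two kinds. First, one must check that the identifications $TJ=TJ_c\simeq T\J_c$ are mutually compatible and are maps of the correct Galois modules; here good reduction of $X$, and the resulting unramifiedness, are what make the specialisation map $TJ_c\to T\J_c$ equivariant, and one must keep the $G_K$-, $G_{K_c}$-, $G_{k(c)}$- and $\pi_1(C,\xi)$-actions aligned with the corresponding square of groups. Second, one must verify that the base points $x_0$, $x_0\in X_c(K_c)$ and $x_0(c)$ are genuinely matched by all the morphisms above --- e.g.\ that specialising the section of $\pi_1(\X_c,\xi'_c)$ attached to $x_0\in X_c(K_c)$ yields the one attached to $x_0(c)$ --- so that the bijections of \S2.2 are strictly compatible rather than merely compatible up to a fixed translation; this follows from functoriality of the point-to-section and point-to-cohomology-class maps of \S2.2 together with injectivity of $\X_c(k(c))\hookrightarrow\widehat{\J_c(k(c))}$. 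One economy worth noting is that the square formed by $\res_c$, $\inf_c$, $\inf_C$, $\res_{C,c}$ commutes, being induced by the evident commuting square of groups expressing that the decomposition subgroup $G_{K_c}\subset G_K$ maps onto the decomposition subgroup $G_{k(c)}\subset\pi_1(C,\xi)$; hence once part (ii) is proved, the chain $\res_c(s^{\ab})=\res_c(\inf_C(s_C^{\ab}))=\inf_c(\res_{C,c}(s_C^{\ab}))=\inf_c(\bar s_c^{\ab})$ reduces the second equality of part (i) to the first.
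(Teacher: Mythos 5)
Your proposal is correct and is a careful expansion of the paper's own (very terse) argument: the paper also deduces (i) from the facts that $s^{\et}$ restricts to $s_c^{\et}$ (diagram (10) in Lemma \ref{pullbackgx}) and that $\bar s_c^{\et}$ inflates to $s_c^{\et}$ through the specialisation map (diagram (7) with $U=X$), and deduces (ii) from diagram (\ref{specialisationofsab}), implicitly using the same compatibility of torsor trivialisations and $H^1$-classes that you formalise as your "principle." The only remarks worth making are that your appeal to injectivity of $\X_c(k(c))\hookrightarrow\widehat{\J_c(k(c))}$ when matching base points is unnecessary (functoriality alone does the job), and that the commuting-square economy at the end, while valid, is extra to what the paper records.
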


\begin{proof}
Part (i) follows from the fact that $s^{\et}$ and $\bar s^{\et}_c$ both pull back to $s^{\et}_c$ - see diagrams (\ref{pullbackdiagrampi1u}) and (\ref{defofvarphisandramified}), considering the case when $U=X$ is projective.  Part (ii) follows from diagram (\ref{specialisationofsab}).  See also \cite[Lemma 3.4]{saidiSCOFF}.
\end{proof}

The image of $s^{\ab}$ under the diagonal map
\[\prod_{c\in C^{\cl}}\res_c:H^1(G_K,TJ)\longrightarrow\prod_{c\in C^{\cl}}H^1(G_{K_c},TJ_c)\]
is therefore the family $(s_c^{\ab})_{c\in C^{\cl}}$.  This diagonal map fits into the following commutative diagram of Kummer exact sequences (see Lemma \ref{kummerexactseq}).
\[\begin{tikzpicture}
\matrix(m)[matrix of math nodes,
row sep=3em, column sep=2em,
text height=2ex, text depth=0.25ex]
{0 & \widehat{J(K)} & H^1(G_K,TJ) & TH^1(G_K,J) & 0\\
0 & \displaystyle\prod_{c\in C^{\cl}}\widehat{J_c(K_c)} & \displaystyle\prod_{c\in C^{\cl}}H^1(G_{K_c},TJ_c) & \displaystyle\prod_{c\in C^{\cl}}TH^1(G_{K_c},J_c) & 0\\};
\path[->]
(m-1-1) edge (m-1-2);
\path[->]
(m-1-2) edge (m-1-3) edge (m-2-2);
\path[->]
(m-1-3) edge (m-1-4) edge (m-2-3);
\path[font=\scriptsize,->]
(m-1-4) edge (m-1-5) edge node[left]{} (m-2-4);
\path[->]
(m-2-1) edge (m-2-2);
\path[->]
(m-2-2) edge (m-2-3);
\path[->]
(m-2-3) edge (m-2-4);
\path[->]
(m-2-4) edge (m-2-5);
\end{tikzpicture}\]
Note that the kernel of the right vertical map is the Tate module $T\Sha(\J)$ of the Shafarevich-Tate group $\Sha(\J)$ (Definition \ref{sha}).  Commutativity of this diagram immediately implies the following.

\begin{lemma}\label{tshatrivial}
Suppose that, for every $c\in C^{\cl}$, the section $s_c^{\ab}\in H^1(G_{K_c},TJ_c)$ is contained in $\widehat{J_c(K_c)}$.  Then if $T\Sha(\J)=0$, the section $s^{\ab}\in H^1(G_K,TJ)$ is contained in $\widehat{J(K)}$.
\end{lemma}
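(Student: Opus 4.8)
The plan is to read off the statement from the commutative diagram of Kummer exact sequences displayed immediately above, whose two rows are exact by Lemma \ref{kummerexactseq} (applied to $J$ over $K$ and to each $J_c$ over $K_c$) and whose right-hand vertical arrow $TH^1(G_K,J)\to\prod_{c\in C^{\cl}}TH^1(G_{K_c},J_c)$ has kernel $T\Sha(\J)$ by Definition \ref{sha} together with left-exactness of the Tate-module functor. So the argument is a pure diagram chase.

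First I would track the element $s^{\ab}\in H^1(G_K,TJ)$. Along the diagonal map $\prod_{c\in C^{\cl}}\res_c$ it goes to the family $(s_c^{\ab})_{c\in C^{\cl}}$ by Lemma \ref{sabresinf}(i); let $t\in TH^1(G_K,J)$ be its image under the surjection in the top row. By hypothesis each $s_c^{\ab}$ lies in $\widehat{J_c(K_c)}$, so by exactness of the bottom row the family $(s_c^{\ab})_{c}$ is killed by the bottom surjection onto $\prod_{c\in C^{\cl}}TH^1(G_{K_c},J_c)$. Commutativity of the right-hand square then shows that the image of $t$ in $\prod_{c\in C^{\cl}}TH^1(G_{K_c},J_c)$ vanishes, i.e. $t$ lies in the kernel of the right vertical arrow, which is $T\Sha(\J)=0$. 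Hence $t=0$, and exactness of the top row places $s^{\ab}$ in the image of the injection $\widehat{J(K)}\hookrightarrow H^1(G_K,TJ)$, which is precisely the assertion $s^{\ab}\in\widehat{J(K)}$.

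I do not anticipate any genuine obstacle: every ingredient is already in place (exactness of the rows, commutativity of the squares, the identification of the image of $s^{\ab}$ with $(s_c^{\ab})_c$ via Lemma \ref{sabresinf}, and the identification of the relevant kernel with $T\Sha(\J)$), and the conclusion is a formal consequence of these. If anything deserves a word of care it is the claim that $T$ commutes with the infinite product $\prod_{c\in C^{\cl}}$ and is left exact, so that the kernel on the bottom-right is indeed $T\Sha(\J)$; but this is standard and was already noted just before the statement.
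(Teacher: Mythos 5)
Your proof is correct and is exactly the diagram chase the paper has in mind; the paper simply states the lemma is immediate from commutativity of the displayed diagram, and you have spelled out that chase in full.
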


\begin{proposition}\label{sabjkhat}
Assume that $T\Sha(\J)=0$, and that $k$ strongly satisfies conditions (i), (ii) and (iii)(a) of Definition \ref{conditions}.  Then we have the following.
\begin{enumerate}
\item For each $c \in C^{\cl}$, $\bar s_c^{\ab}$ is in the image of the injective map $\X_c(k(c)) \hookrightarrow H^1(G_{k(c)},T\J_c)$, and $s_c^{\ab}$ is in the image of $X_c(K_c)\to H^1(G_{K_c},TJ_c)$.
\item $s^{\ab}$ is contained in $\widehat{J(K)}$.
\end{enumerate}
\end{proposition}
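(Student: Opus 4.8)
The plan is to prove part (i) first, using the specialisation theory of \S\ref{spechom} and \S\ref{ramificationofsections} together with conditions (i), (ii) and (iii)(a), and then deduce part (ii) from part (i) via Lemma \ref{tshatrivial}. For part (i), fix $c \in C^{\cl}$ and pass to the local section $\tilde s_c : G_{K_c} \to \pi_1(X_c - \tilde S_c)$ induced by $s$. By Remark \ref{properimpliesunramified} (applying Proposition \ref{weightargument} with $S = \emptyset$, so that $X_c$ is proper), the section $\tilde s_c^{\et}$ on the level of \'etale fundamental groups is unramified, hence specialises to a section $\bar s_c^{\et} : G_{k(c)} \to \pi_1(\X_c, \xi'_c)$, and likewise $\tilde s_c$ specialises to $\bar s_c : G_{k(c)} \to G_{\X_c}$. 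Since $k$ strongly satisfies condition (i), the residue field $k(c)$ being a finite extension of $k$ means the \textbf{BSC} holds over $k(c)$, so $\bar s_c$ is geometric and arises from a $k(c)$-rational point $x_c \in \X_c(k(c))$; it arises from a \emph{unique} such point because $k$ strongly satisfies condition (iii)(a), which by the Kummer sequence of Lemma \ref{kummerexactseq} forces the map $\J_c(k(c)) \to \widehat{\J_c(k(c))} \hookrightarrow H^1(G_{k(c)}, T\J_c)$ to be injective, and the composite $\X_c(k(c)) \hookrightarrow \J_c(k(c))$ to be injective (the first injectivity is condition (iii)(a); the second follows since $\iota$ is a closed immersion). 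Taking the abelianisation, $\bar s_c^{\ab}$ is exactly the image of $x_c$ under $\X_c(k(c)) \hookrightarrow H^1(G_{k(c)}, T\J_c)$, which establishes the first half of (i).

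For the second half of (i), I would use Lemma \ref{sabresinf}(i), which gives $s_c^{\ab} = \inf_c(\bar s_c^{\ab})$. Since $\bar s_c^{\ab}$ lies in the image of $\X_c(k(c)) \to H^1(G_{k(c)}, T\J_c)$, and the diagram of Kummer sequences relating $H^1(G_{k(c)}, T\J_c)$ and $H^1(G_{K_c}, TJ_c)$ is compatible with the specialisation map $\J_c(k(c)) \to J_c(K_c)$ induced by $x_c \mapsto y_c$, where $y_c \in \tilde S_c \subset X_c(K_c)$ is the unique $K_c$-rational point specialising to $x_c$ (Lemma \ref{liftingtoalgpts}, Definition \ref{stildekc}), the class $s_c^{\ab}$ is the image of $y_c \in X_c(K_c)$ under $X_c(K_c) \to \widehat{J_c(K_c)} \hookrightarrow H^1(G_{K_c}, TJ_c)$. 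In particular $s_c^{\ab} \in \widehat{J_c(K_c)}$ for every $c \in C^{\cl}$.

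Part (ii) is then immediate: by part (i), $s_c^{\ab} \in \widehat{J_c(K_c)}$ for all $c \in C^{\cl}$, and by the hypothesis $T\Sha(\J) = 0$, Lemma \ref{tshatrivial} yields $s^{\ab} \in \widehat{J(K)}$. The main obstacle in the argument is the passage from the global section $s$ to the local sections and their specialisations with all compatibilities intact — that is, verifying that $s_c^{\ab}$ really is the inflation of $\bar s_c^{\ab}$ (Lemma \ref{sabresinf}(i)) and that $\bar s_c^{\ab}$ genuinely records the point $x_c$ rather than merely lying abstractly in $\widehat{\J_c(k(c))}$; both rest on the hyperbolicity hypothesis, on Remark \ref{properimpliesunramified} guaranteeing unramifiedness, and on the identification in \S2.2 of the abelian section associated to a rational point with its image in $H^1$ under the Kummer map. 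Once these identifications are in place, the finiteness of $\Sha$ enters only through the single diagram chase of Lemma \ref{tshatrivial}.
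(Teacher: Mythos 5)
There is a genuine gap in your passage from the unramifiedness of $s_c^{\et}$ to the unramifiedness of $\tilde s_c$. Remark \ref{properimpliesunramified} shows that the section $s_c^{\et}:G_{K_c}\to\pi_1(X_c,\xi_c)$ is unramified because $X_c$ is proper, and this gives the specialisation $\bar s_c^{\et}$. But the section $\tilde s_c:G_{K_c}\to\pi_1(X_c-\tilde S_c)$ lives in a strictly larger group, and its ramification is a more refined notion: by Lemma \ref{toramifiedsubsets}, $\tilde s_c$ is unramified only if $s_{c,U}$ is unramified for \emph{every} open $U_c\subset X_c$ with complement in $\tilde S_c$. Unramifiedness of $s_c^{\et}$ is just the case $U_c=X_c$ and does not control the smaller opens; indeed $\varphi_{s_c}(I_{K_c})$ can be a nontrivial subgroup of $G_{\X_{\ob{k(c)}}}$ that maps to the identity in $\pi_1(\X_c,\xi'_c)$ (for instance when $s_c$ comes from a $K_c$-point of $X_c$ that specialises to a point of $\X_c$ but is not the chosen point of $\tilde S_c$; cf.\ Corollary \ref{cuspidalevenifyisnotacusp}). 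So the assertion ``likewise $\tilde s_c$ specialises to $\bar s_c$'' is not justified, and your appeal to the \textbf{BSC} over $k(c)$ cannot get started when $\tilde s_c$ is ramified, since there is no section $\bar s_c$ of $G_{\X_c}$ to apply it to.

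The paper's proof avoids this by invoking Theorem \ref{stildeptth}, which is stated precisely to cover both cases. In the unramified case Theorem \ref{stildeptth}(i) is applied after first using condition (i) to conclude $\bar s_c$ is geometric, exactly as you do. In the ramified case Theorem \ref{stildeptth}(ii) applies directly: by Lemmas \ref{ramified=cuspidal} and \ref{galptthramified}, resting on Proposition \ref{weightargument} and condition (ii), the homomorphism $\varphi_{s_c}=\Sp\circ\tilde s_c$ already has image in a decomposition group $D_{\tilde x}$ for a $k(c)$-rational point $x\in\X_c(k(c))$, even though $\varphi_{s_c}$ does not factor through $G_{k(c)}$. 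In that case no appeal to the \textbf{BSC} is needed at all. Your deduction of part (ii) from part (i) via Lemma \ref{tshatrivial}, and the use of conditions (iii)(a) for injectivity, match the paper and are fine; but the proposal as written silently assumes the ramified case does not occur, which is the piece of content Theorem \ref{stildeptth}(ii) supplies.
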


\begin{proof}
Let $\tilde s_c : G_{K_c} \to \pi_1(X_c - \tilde S_c)$ denote the section of $\pi_1(X_c - \tilde S_c)$ induced by $s_c$, and write $\varphi_{s_c} := \Sp \circ \tilde s_c$.  Let $\tilde X_{c, \tilde S_c} \to X_c - \tilde S_c$ be a universal pro-\'etale cover, and recall $\overline {k(\X_c)}$ the separable closure of the function field of $\X_c$ determined by $\xi'_c$.  By Theorem \ref{stildeptth}, for every $c\in C^{\cl}$ we have $\varphi_{s_c}(G_{K_c}) \subset D_{\tilde x}$ for a unique $x \in \X_c(k(c))$ and some unique extension $\tilde x$ of $x$ to $\overline {k(\X_c)}$, and $\tilde s_c(G_{K_c}) \subset D_{\tilde y}$ for some unique $\tilde y$ in $\tilde X_{c, \tilde S_c}$ above the unique ($K_c$-rational) point $y$ of $\tilde S_c$ specialising to $x$.  This implies that $s_c^{\et}(G_{K_c})=D_{\tilde y'}$, for some $\tilde y'$ above $y$ in a universal pro-\'etale cover $\tilde X_c \to X_c$, and $\bar s_c^{\et}(G_{k(c)}) = D_{\tilde x'}$ for some $\tilde x'$ above $x$ in a universal pro-\'etale cover $\tilde\X_c \to \X_c$.  This means that $s_c^{\et}$, respectively $\bar s_c^{\et}$ arises from $y \in X_c(K_c)$, resp. $x \in \X_c(k(c))$ by functoriality of the fundamental group, which proves (i) (see the discussion before Lemma \ref{kummerexactseq}). The map $\X_c(k(c))\to H^1(G_{k(c)},T\J_c)$ is injective by condition (iii)(a) of Definition \ref{conditions}.

Since the map $X_c(K_c)\to H^1(G_{K_c},TJ_c)$ factors through the inclusion $\widehat{J_c(K_c)} \hookrightarrow H^1(G_{K_c}, TJ_c)$ (see sequence (\ref{sabptth})), part (i) implies in particular that $s_c^{\ab}$ is contained in $\widehat{J_c(K_c)}$, and since this is true for every $c\in C^{\cl}$, Lemma \ref{tshatrivial} implies that $s^{\ab}\in\widehat{J(K)}$, which proves (ii).
\end{proof}

\section{Proof of the Main Theorems}

\subsection{Proof of Theorem A}

Let $k$ be a field of characteristic zero that satisfies conditions (iv) and (v) of Definition \ref{conditions}, and strongly satisfies conditions (i), (ii) and (iii) of Definition \ref{conditions}.  Let $C$ be a smooth, separated, connected curve over $k$ with function field $K$.  Let $\X\rightarrow C$ be a flat, proper, smooth relative curve whose generic fibre $X:=\X\times_C K$ is geometrically connected and hyperbolic, with $X(K)\ne\emptyset$.  Let $\J:=\Pic^0_{\X/C}$ denote the relative Jacobian of $\X$, and $J := \J_K$ the Jacobian of $X$.  For a closed point $c \in C^{\cl}$, denote by $\J_c := \J_{k(c)}$ the Jacobian of $\X_c$.  Assume that $T\Sha(\J)=0$.  We show that the birational section conjecture holds for $X$ (see Definitions \ref{geometricgaloissections} and \ref{bscholds} and Remark \ref{bscuniqueness}).  

Let $s:G_K\rightarrow G_X$ be a section.  Under our assumptions, the \'etale abelian section $s^{\ab}$ induced by $s$ is contained in $\widehat{J(K)}$, by Proposition \ref{sabjkhat} (ii).

\begin{lemma}\label{sabinjk}
The homomorphism $J(K)\rightarrow\widehat{J(K)}$ is injective and $s^{\ab}$ is contained in $J(K)$.
\end{lemma}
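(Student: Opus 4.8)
The plan is to reduce the statement, via the Lang--N\'eron theorem, to condition (iii) of Definition \ref{conditions} for abelian varieties over finite extensions of $k$. Replacing $k$ by its algebraic closure in $K$ --- a finite extension, so that all hypotheses are preserved --- I may assume $K/k$ is regular. Let $(B,\tau)$ be the $K/k$-trace of $J$, so that $B$ is an abelian variety over $k$ and $\tau\colon B_K\to J$ is a homomorphism which, in characteristic zero, is injective on rational points. By the Lang--N\'eron theorem the abelian group $M:=J(K)/\tau B(k)$ is finitely generated, so there is a short exact sequence
\[0\to B(k)\to J(K)\to M\to 0.\]

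For the injectivity of $J(K)\to\widehat{J(K)}$ I would derive from this a short exact sequence of completions
\[0\to\widehat{B(k)}\to\widehat{J(K)}\to\widehat{M}\to 0\]
sitting in a commutative ladder over the previous one. This is a routine homological computation: tensoring with $\Z/N$ introduces only the groups $M[N]$, which are finite and of bounded exponent because $M$ is finitely generated, so the relevant inverse systems are Mittag--Leffler with vanishing $\varprojlim$ and $\varprojlim^1$, and passage to the limit over $N$ is exact. Condition (iii), applied to $B$ over $k$ with the trivial quotient $D=B(k)$, shows that $B(k)\to\widehat{B(k)}$ is injective, while $M\to\widehat{M}$ is injective since $M$ is finitely generated; the five lemma then gives the injectivity of $J(K)\to\widehat{J(K)}$.

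To prove $s^{\ab}\in J(K)$ I would bring in the local information already established. By Proposition \ref{sabjkhat} (i) (and the discussion preceding Lemma \ref{kummerexactseq}), for every $c\in C^{\cl}$ the restriction $s_c^{\ab}$ is the inflation of $\bar s_c^{\ab}$, which comes from a rational point $x_c\in\X_c(k(c))$. Since $\X\to C$ is smooth and proper with fibres of genus $\geq 2$, the relative Jacobian $\J=\Pic^0_{\X/C}\to C$ is an abelian scheme, hence its own N\'eron model, so the trace $\tau$ spreads out to a $C$-homomorphism $B\times_k C\to\J$ by the N\'eron mapping property; passing to the fibre over $c$ yields a specialisation map $\mathrm{sp}_c\colon J(K)=\J(\oh_{C,c})\to\J_c(k(c))$ which carries $\tau B(k)$ into the trace subgroup of $\J_c(k(c))$, is compatible with the restriction--inflation maps on cohomology, and extends to the completions. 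It follows that, for every $c$, the map $\widehat{J(K)}\to\widehat{\J_c(k(c))}$ sends $s^{\ab}$ to the class of the point $x_c$ inside $\J_c(k(c))\subset\widehat{\J_c(k(c))}$, the inclusion holding by condition (iii)(a). Writing $M_c:=\J_c(k(c))/\tau_c B(k(c))$ and using for the $\J_c$ over the finite extensions $k(c)$ of $k$ the analogues of the completion sequence above --- whose exactness rests on condition (iii)(b) --- one reduces $s^{\ab}\in J(K)$ to the statement that $\widehat{B(k)}/B(k)$ injects into $\prod_{c}\widehat{B(k(c))}/B(k(c))$ and $\widehat{M}/M$ into $\prod_{c}\widehat{M_c}/M_c$; granting this, $s^{\ab}\in\widehat{J(K)}$, being specialised at every closed point of $C$ to a genuine rational point, must lie in $J(K)$.

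This last step is where I expect the real difficulty to lie. The injectivity statement and the reduction through the $K/k$-trace are essentially formal, but to show that an element of $\widehat{J(K)}$ which is locally a rational point at every place of $C$ is in fact a global point of $J(K)$ one must use conditions (iii)(a) and (iii)(b) --- injectivity into the profinite completion, finiteness of the $N$-torsion, and triviality of the Tate modules of the groups of rational points, applied over each residue field $k(c)$ --- in an essential way, in the spirit of \cite[Lemma 3.4]{saidiSCOFF}.
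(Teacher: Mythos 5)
Your approach is genuinely different from the paper's, and the second half of your argument has a real gap that you yourself flag.

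The paper does not go through the $K/k$-trace or Lang--N\'eron at all. Instead, it invokes the theorem of Poonen--Voloch (\cite[Proposition 2.4]{poonenvoloch}) that there exist \emph{two} closed points $c_1, c_2 \in C^{\cl}$ for which the specialisation map $J(K) \to \J_{c_1}(k(c_1)) \times \J_{c_2}(k(c_2))$ is already injective. Choosing a finite extension $\ell|k$ containing $k(c_1)$ and $k(c_2)$, this embeds $J(K)$ into $(\J_{c_1}\times\J_{c_2})(\ell)$, the rational points of an abelian variety over a finite extension of $k$, to which condition (iii) applies verbatim. Taking $H$ to be the cokernel, condition (iii)(b) (applied to the quotient $H$) gives exactness of $0 \to \widehat{J(K)} \to \widehat{\J_{1,2}(\ell)} \to \widehat{H} \to 0$, condition (iii)(a) gives injectivity of the middle and right verticals, and then both assertions of the lemma follow at once: injectivity of $J(K)\to\widehat{J(K)}$ by the five lemma, and $s^{\ab}\in J(K)$ from the equality $J(K)=\phi(\J_{1,2}(\ell))\cap\psi(\widehat{J(K)})$ together with the fact (from Proposition \ref{sabjkhat}(i) and Lemma \ref{sabresinf}) that $\psi(s^{\ab})=(\bar s_{c_1}^{\ab},\bar s_{c_2}^{\ab})$ lands in $\phi(\J_{1,2}(\ell))$. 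The whole point is to reduce, at a single stroke, from the abelian variety $J$ over $K$ --- to which condition (iii) does \emph{not} directly apply --- to an abelian variety over a finite extension of $k$.

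Your proposed route through the $K/k$-trace has two weaknesses. For the injectivity of $J(K)\to\widehat{J(K)}$, the exactness of $0\to\widehat{B(k)}\to\widehat{J(K)}\to\widehat{M}\to 0$ is asserted but not established; tensoring with $\Z/N$ yields $M[N]\to B(k)/N\to J(K)/N\to M/N\to 0$, and passing to the limit in $N$ requires controlling $\varprojlim^1$ of the image of $M[N]$ in $B(k)/N$, not merely noting that $TM=0$. This is probably repairable (using that $M$ is finitely generated so $M[N]$ has bounded order, together with (iii)(b) for $B$), but as written it is not a proof. The more serious gap is in the second half. You reduce $s^{\ab}\in J(K)$ to injectivity statements about $\widehat{B(k)}/B(k)\to\prod_c\widehat{B(k(c))}/B(k(c))$ and $\widehat{M}/M\to\prod_c\widehat{M_c}/M_c$, and then write ``granting this''. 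These injectivities are not proved, and it is not clear they are true or that they would suffice; in particular the groups $M_c$ depend on the $k(c)/k$-trace structure of $\J_c$, and there is no visible mechanism making the diagram commute with the restriction--inflation maps on cohomology in the way you would need. Poonen--Voloch is precisely the ingredient that lets the paper bypass all of this by collapsing the local-to-global step to finitely many (indeed two) closed points, and your proposal does not contain a substitute for it.
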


\begin{proof}
There exist $c_1,c_2\in C^{\cl}$ such that the natural specialisation homomorphism $J(K)\rightarrow\J_{c_1}(k(c_1))\times\J_{c_2}(k(c_2))$ is injective \cite[Proposition 2.4]{poonenvoloch}.  
Let $\ell $ be a finite extension of $k$ that contains $k(c_1)$ and $k(c_2)$.  Then there is an injective homomorphism $\J_{c_i}(k(c_i))\hookrightarrow\J_{c_i}(\ell )$ for each $i=1,2$, hence an injective homomorphism $\J_{c_1}(k(c_1))\times\J_{c_2}(k(c_2))\hookrightarrow\J_{c_1}(\ell)\times\J_{c_2}(\ell)\simeq(\J_{c_1}\times\J_{c_2})(\ell)$.  For ease of notation, let us write $\J_{1,2}(\ell):= (\J_{c_1}\times\J_{c_2})(\ell)$. We have a commutative diagram of exact sequences:

\[\begin{tikzpicture}[descr/.style={fill=white}]
\matrix(m)[matrix of math nodes,
row sep=3em, column sep=3em,
text height=2.5ex, text depth=0.25ex]
{0 & J(K) & \J_{1,2}(\ell) & H & 0\\
0 & \widehat{J(K)} & \widehat{\J_{1,2}(\ell)} & \widehat{H} & 0\\};
\path[->]
(m-1-1) edge (m-1-2);
\path[->]
(m-1-2) edge (m-1-3) edge (m-2-2);
\path[->,font=\scriptsize]
(m-1-3) edge (m-1-4) edge node[left]{$\phi$} (m-2-3);
\path[->]
(m-1-4) edge (m-1-5) edge (m-2-4);
\path[->]
(m-2-1) edge (m-2-2);
\path[->,font=\scriptsize]
(m-2-2) edge node[above]{$\psi$} (m-2-3);
\path[->]
(m-2-3) edge (m-2-4);
\path[->]
(m-2-4) edge (m-2-5);
\end{tikzpicture}\]
where $H$ is defined so that the upper horizontal sequence is exact.  Exactness of the lower sequence follows easily from condition (iii) (b) of Definition \ref{conditions}, while condition (iii) (a) implies that the middle and right vertical maps are injective.  Therefore the left vertical map is also injective, and the equality $J(K)=\phi(\J_{1,2}(\ell))\cap\psi(\widehat{J(K)})$ holds inside $\widehat{\J_{1,2}(\ell)}$.  

For each $c_i$, $i=1,2$, the section $s$ induces an element $\bar s_{c_i}^{\ab}\in H^1(G_{k(c_i)},T\J_{c_i})$, which is contained in the image of the map $\X_{c_i}(k(c_i)) \to H^1(G_{k(c_i)}, T\J_{c_i})$ by Proposition \ref{sabjkhat} (i).  This map is injective by condition (iii)(a) of Definition \ref{conditions}, so we may consider $\bar s_{c_i}^{\ab}$ to be contained in $\X_{c_i}(k(c_i))$.  Then $\bar s_{c_i}^{\ab}$ is contained in $\J_{c_i}(\ell)$ for each $i=1,2$, due to injectivity of the maps $\X_{c_i}(k(c_i))\hookrightarrow\J_{c_i}(k(c_i))\hookrightarrow\J_{c_i}(\ell)$.  Thus $(\bar s_{c_1}^{\ab},\bar s_{c_2}^{\ab})$ is contained in $\J_{c_1}(\ell)\times\J_{c_2}(\ell)$, hence in $\phi(\J_{1,2}(\ell))$.  By Lemma \ref{sabresinf}, the image of $s^{\ab}\in\widehat{J(K)}$ in $\widehat{\J_{1,2}(\ell)}$ under $\psi$ is the element $(\bar s_{c_1}^{\ab},\bar s_{c_2}^{\ab})$, and since this lies in $\phi(\J_{1,2}(\ell))$ we have $s^{\ab}\in\phi(\J_{1,2}(\ell))\cap\psi(\widehat{J(K)})=J(K)$.
\end{proof}

Fix a $K$-rational point $x_0\in X(K)=\X(C)$ (non-empty by assumption), and let $\iota:\X\rightarrow\J$ denote the closed immersion mapping $x_0$ to the zero section of $\J$.

\begin{lemma}\label{sabinxk}
$s^{\ab}$ is contained in $X(K)$.
\end{lemma}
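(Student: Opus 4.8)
The plan is to exploit properness of the structure maps of $\X$ and $\J$ over $C$. Since $\J\to C$ is an abelian scheme and $C$ is a smooth curve, $J(K)=\J(C)$, and since $\X\to C$ is proper, $X(K)=\X(C)$; via the closed immersion $\iota\colon\X\hookrightarrow\J$ the subset $X(K)\subseteq J(K)$ is precisely the set of sections $\sigma\colon C\to\J$ that factor through the closed subscheme $\iota(\X)\subseteq\J$. By Lemma \ref{sabinjk} we already know $s^{\ab}\in J(K)$; write $\sigma\colon C\to\J$ for the corresponding section. It then remains to prove that $\sigma$ factors through $\iota(\X)$.

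First I would check that, for every closed point $c\in C^{\cl}$, the value $\sigma(c)\in\J_c(k(c))$ lies in $\iota(\X_c)\subseteq\J_c$. By Lemma \ref{sabresinf}(ii) we have $\res_{C,c}(s_C^{\ab})=\bar s_c^{\ab}$ together with $\inf_C(s_C^{\ab})=s^{\ab}$; by functoriality the image of $\sigma(c)$ under the Kummer map $\J_c(k(c))\hookrightarrow H^1(G_{k(c)},T\J_c)$ equals $\res_{C,c}(s_C^{\ab})=\bar s_c^{\ab}$, this map being injective because $k$ strongly satisfies condition (iii)(a) of Definition \ref{conditions}. On the other hand Proposition \ref{sabjkhat}(i) says $\bar s_c^{\ab}$ lies in the image of the injective map $\X_c(k(c))\hookrightarrow H^1(G_{k(c)},T\J_c)$, i.e.\ $\bar s_c^{\ab}=\iota(x_c)$ for a unique point $x_c\in\X_c(k(c))$. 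Comparing inside $H^1(G_{k(c)},T\J_c)$ and using injectivity of the Kummer map yields $\sigma(c)=\iota(x_c)\in\iota(\X_c)$.

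Then I would conclude by a density argument. The scheme-theoretic preimage $Z:=\sigma^{-1}(\iota(\X))$ is a closed subscheme of $C$, and by the previous step its underlying set contains every closed point of $C$. Since $C$ is an integral scheme of finite type over the (infinite, characteristic zero) field $k$, closed points are dense and infinite in number, so the only closed subscheme of $C$ whose support contains every closed point is $C$ itself; hence $Z=C$ and $\sigma$ factors as $C\to\X\xrightarrow{\iota}\J$. This gives a section of $\X\to C$, i.e.\ an element of $X(K)=\X(C)$ whose image in $J(K)$ is $s^{\ab}$, as required. (Equivalently, this shows that inside $\prod_{c\in C^{\cl}}\J_c(k(c))$ one has $\bigl(\prod_{c\in C^{\cl}}\X_c(k(c))\bigr)\cap J(K)=X(K)$, where $J(K)$ is embedded diagonally via the specialisation maps, which are jointly injective since $\J\to C$ is separated.)

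The step I expect to be the main obstacle is the compatibility in the second paragraph: verifying that the scheme-theoretic value $\sigma(c)$ of the section corresponding to $s^{\ab}\in J(K)=\J(C)$ coincides, under the Kummer injection, with the cohomology class $\bar s_c^{\ab}=\res_{C,c}(s_C^{\ab})$. This is a diagram chase combining functoriality of $\pi_1$, the push-out defining the geometrically abelian quotient, and Lemma \ref{sabresinf}, but it must be spelled out with care; the remaining ingredients (properness, the characterisation of $X(K)$ inside $J(K)$, and the density argument) are formal.
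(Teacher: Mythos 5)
Your argument follows the paper's proof essentially step by step: regard $s^{\ab}\in J(K)=\J(C)$ as a section $\sigma\colon C\to\J$, use Lemma \ref{sabresinf} and Proposition \ref{sabjkhat}(i) together with condition (iii)(a) to show that $\sigma(c)\in\iota(\X_c(k(c)))$ for every closed point $c$, and deduce that $\sigma$ factors through $\iota(\X)$. Your concluding density argument (that a closed subscheme of the integral curve $C$ whose support contains all closed points must be all of $C$) merely makes explicit the final step that the paper states without elaboration, so this is the same approach.
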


\begin{proof}
Since $s^{\ab}$ is contained in $J(K) = \J(C)$, it may be regarded as a morphism $s^{\ab} : C \to \J$.  By Lemma \ref{sabresinf}, the pullback of this morphism to $\Spec k(c)$ is precisely $\bar s_c^{\ab}$, considered an element of $\J_c(k(c)) \subset H^1(G_{k(c)}, T\J_c)$.  But $\bar s_c^{\ab}$ is contained in $\X_c(k(c))$ by Proposition \ref{sabjkhat} (i), hence the morphism $\bar s_c^{\ab} : \Spec k(c) \to \J_c$ must factor through $\X_c$, where $\X_c$ is considered a closed subscheme of $\J_c$ via $\iota$.  Thus, for each $c \in C^{\cl}$, the image of $c$ under the morphism $s^{\ab} : C \to \J$ is a closed point of $\X_c$.  This implies that $s^{\ab} : C \to \J$ factors through $\iota:\X\rightarrow\J$, thus $s^{\ab}$ is contained in the subset $\iota(X(K))\subseteq J(K)$.
\end{proof}

Let $z$ be the point in $X(K)$ such that $\iota(z)=s^{\ab}$, and for each $c \in C^{\cl}$ let $\bar z_c$ denote its specialisation to $\X_c$.  Let $\bar x_c\in\X_c(k(c))$ be the point associated to $\bar s_c^{\ab}$ by Proposition \ref{sabjkhat} (i).

\begin{lemma}\label{zc=xc}
$\bar z_c=\bar x_c$ in $\X_c(k(c))$ for all $c\in C^{\cl}$.
\end{lemma}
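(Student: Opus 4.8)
The plan is to reduce the claim to the injectivity of the Abel--Jacobi embedding on $k(c)$-points, exploiting that all the constructions in play commute with base change along $\Spec k(c)\to C$. First I would translate the point $z\in X(K)=\X(C)$ into a section $\sigma_z:C\to\X$ of the structure morphism $\X\to C$ (this uses that $\X\to C$ is proper), so that by definition $\bar z_c\in\X_c(k(c))$ is the pullback $\sigma_z\times_C\Spec k(c)$. Since $\iota:\X\to\J$ is a morphism of $C$-schemes, its pullback along $\Spec k(c)\to C$ is the closed immersion $\iota_c:=\iota\times_C\Spec k(c):\X_c\hookrightarrow\J_c$, which is precisely the map used to regard $\X_c$ as a closed subscheme of $\J_c$.

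Next I would exploit the defining property of $z$, namely that $\iota\circ\sigma_z:C\to\J$ is the morphism corresponding to $s^{\ab}\in J(K)=\J(C)$. Pulling this equality back along $\Spec k(c)\to C$ yields $\iota_c(\bar z_c)=s^{\ab}|_{\Spec k(c)}$ in $\J_c(k(c))$, and by Lemma~\ref{sabresinf} --- in the geometric form already used in the proof of Lemma~\ref{sabinxk} --- the pullback of $s^{\ab}:C\to\J$ to $\Spec k(c)$ is exactly $\bar s_c^{\ab}$, viewed as an element of $\J_c(k(c))\subset H^1(G_{k(c)},T\J_c)$. Since, by the discussion preceding Lemma~\ref{kummerexactseq}, the natural map $\X_c(k(c))\to H^1(G_{k(c)},T\J_c)$ sending a point to the class of its \'etale abelian section factors through $\iota_c:\X_c(k(c))\hookrightarrow\J_c(k(c))$, it follows that the image of $\bar z_c$ under this map is $\bar s_c^{\ab}$.

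Finally I would observe that $\bar x_c$ is, by its very definition in Proposition~\ref{sabjkhat}(i), the point of $\X_c(k(c))$ whose image under that same map $\X_c(k(c))\to H^1(G_{k(c)},T\J_c)$ is $\bar s_c^{\ab}$. Because $k$ strongly satisfies condition~(iii)(a) of Definition~\ref{conditions}, this map is injective, so $\bar z_c$ and $\bar x_c$, sharing the same image, must coincide in $\X_c(k(c))$. There is no serious obstacle in this argument; the only point requiring care is that the relative Abel--Jacobi immersion $\iota$, the specialisation of a $C$-section, and the assignment of an \'etale abelian section class are all compatible with base change along $\Spec k(c)\to C$, which is exactly the content of Lemma~\ref{sabresinf} together with the functoriality systematically used in \S\ref{etaleabeliansections}.
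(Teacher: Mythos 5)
Your proof is correct and follows essentially the same route as the paper's: both identify $\bar s_c^{\ab}$ as the common image of $\bar x_c$ and $\bar z_c$ under the map $\X_c(k(c))\to H^1(G_{k(c)},T\J_c)$ (via Lemma~\ref{sabresinf} and compatibility with base change), and then conclude from the injectivity supplied by condition~(iii)(a). You have simply spelled out the base-change compatibilities that the paper's two-line argument leaves implicit.
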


\begin{proof}
Lemma \ref{sabresinf} implies that, for each $c \in C^{\cl}$, $\bar s_c^{\ab}$ is the image of both $\bar x_c$ and $\bar z_c$ under the map $\X_c(k(c)) \to H^1(G_{k(c)},T\J_c)$.  This map is injective by condition (iii)(a) of Definition \ref{conditions}, hence $\bar z_c=\bar x_c$.
\end{proof}

\begin{proposition}\label{sisgeometric}
$s$ is geometric.
\end{proposition}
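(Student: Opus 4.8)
The plan is to prove that $s$ arises from the rational point $z\in X(K)$ produced in Lemma \ref{sabinxk}, by means of Tamagawa's limit argument. Recall from Lemmas \ref{sabinxk} and \ref{zc=xc} that $z$ satisfies $\iota(z)=s^{\ab}$ and that for every closed point $c\in C^{\cl}$ its specialisation $\bar z_c\in\X_c(k(c))$ equals the point $\bar x_c$ attached to $\bar s_c^{\ab}$ by Proposition \ref{sabjkhat} (i). Since $X$ is hyperbolic and $k$ satisfies condition (iv) of Definition \ref{conditions}, the set $X(K)$ is compact Hausdorff, and the limit argument of Tamagawa (cf.\ \cite[Proposition 2.8]{tamagawa}) reduces the assertion ``$s$ is geometric'' to the following claim: \emph{every neighbourhood of $s$ has a $K$-rational point}, where a neighbourhood of $s$ is a smooth, projective, geometrically connected curve $Y$ over $K$ with a finite $X$-morphism $Y\to X$ such that $s$ factors through $G_Y\subseteq G_X$. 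Moreover, the point of $X(K)$ produced by the compactness argument must be $z$, because $s$ and the section $s_z$ arising from $z$ induce the same \'etale abelian section and $J(K)\hookrightarrow\widehat{J(K)}$ is injective (Lemma \ref{sabinjk}); so it is enough to establish the displayed claim.

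To that end, fix a neighbourhood $Y\to X$ of $s$. First I would spread it out to a finite morphism $\Y\to\X$ of relative curves over $C$ with generic fibre $Y\to X$, and then restrict this cover along the section $C\to\X$ determined by $z$, obtaining a finite morphism $\tilde C\to C$ of connected curves over $k$; after a suitable choice of model (and, if necessary, a birational modification followed by normalisation) one arranges $\tilde C$ to be smooth over $k$, the construction being such that the generic fibre of $\tilde C\to C$ has a $K$-rational point exactly when $z$ lifts to a $K$-rational point of $Y$. The crux is that $\tilde C_c(k(c))\ne\emptyset$ for every $c\in C^{\cl}$: by construction $\tilde C_c$ is the fibre over $\bar x_c=\bar z_c$ (Lemma \ref{zc=xc}) of a finite cover of $\X_c$ that is a neighbourhood of the specialised section $\bar s_c:G_{k(c)}\to G_{\X_c}$, and since $k$ strongly satisfies condition (i) the \textbf{BSC} holds over the finite extension $k(c)$, so $\bar s_c$ is geometric and arises from $\bar x_c$; hence $\bar x_c$ lifts to a $k(c)$-rational point of that cover, which gives the required point of $\tilde C_c(k(c))$. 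Condition (v) of Definition \ref{conditions}, which $k$ satisfies, then yields $\tilde C(K)\ne\emptyset$, so $z$ lifts to a $K$-rational point of $Y$ and in particular $Y(K)\ne\emptyset$.

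By the first paragraph this shows that $s$ is geometric, arising from $z$ (uniqueness being automatic by Remark \ref{bscuniqueness}). The step I expect to be the main obstacle is the model-theoretic bookkeeping of the second paragraph: one must produce the finite cover $\tilde C\to C$ of \emph{smooth} curves over $k$ demanded by condition (v) --- restricting a possibly ramified cover to the image of the section $z$ can introduce singularities or enlarge residue fields in the fibres --- while simultaneously keeping the relative curve $\X\to C$ smooth so that the specialisation machinery of \S\S3--4 remains available, which rules out the naive remedy of blowing up inside the fibres; and one must check that the specialisation of $Y\to X$ is genuinely a neighbourhood of $\bar s_c$ through which $\bar x_c$ lifts rationally, using the explicit description of $\bar s_c^{\et}$ as a decomposition group obtained in the proof of Proposition \ref{sabjkhat}. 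Once this is in place, the appeal to Tamagawa's compactness argument, with condition (iv) supplying the compactness of $X(K)$, is routine.
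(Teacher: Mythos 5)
Your proposal is correct and follows essentially the same route as the paper's proof: Tamagawa's limit argument together with condition (iv) reduces the problem to producing a $K$-rational point in every neighbourhood $Y\to X$ of $s$, which is achieved by spreading out to a flat, proper, smooth model $\Y\to C$ (shrinking $C$ if necessary), restricting along the section $z:C\to\X$ to obtain $\Y_z\to C$, verifying that each fibre $\Y_{z,c}(k(c))\ne\emptyset$, and invoking condition (v). The step you flag as the main obstacle is precisely what the paper handles by keeping $\Y\to C$ smooth and running the specialisation machinery of Theorem \ref{stildeptth} for $\Y$ in parallel with $\X$, which produces $\bar y_c\in\Y_c(k(c))$ with $\varphi_Y(G_{K_c})\subset D_{\tilde y_c}$ lying over $\bar x_c$ directly from the commutative diagram of specialisation homomorphisms; your variant (BSC over $k(c)$ for $\X_c$ plus the observation that $\Y_c\to\X_c$ is a neighbourhood of $\bar s_c$) rests on the same ingredients and is equivalent.
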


\begin{proof}
By the ``limit argument'' of Tamagawa \cite[Proposition 2.8 (iv)]{tamagawa}, and the fact that $k$ satisfies the condition (iv) in Definition 4.1,
it suffices to prove that for any open subgroup $H \subset G_X$ which contains $s(G_K)$, if $Y \to X$ denotes the corresponding finite morphism with $Y$ smooth, we have $Y(K) \ne \emptyset$.

By construction, we have $G_Y = H$, and $s$ defines a section $s_Y : G_K \to G_Y$ of $G_Y$.  For $c\in C^{\cl}$, let $K_c$ be the completion of $K$ at $c$, and write $X_c:=X\times_K K_c$ and $Y_c:=Y\times_K K_c$.  By Lemma \ref{pullbackgx}, the section $s$ pulls back to a section $s_c:G_{K_c}\rightarrow\pi_1(X_c^{\tr})$, and likewise $s_Y$ pulls back to a section $s_{Y_c}:G_{K_c}\rightarrow\pi_1(Y_c^{\tr})$. We have the following commutative diagram.
\[\begin{tikzpicture}[descr/.style = {fill = white}, baseline = (current bounding box.center)]
\matrix(m)[matrix of math nodes,
row sep=3em, column sep=3em,
text height=2ex, text depth=0.25ex]
{\pi_1(Y_c^{\tr}) & \pi_1(X_c^{\tr}) & G_{K_c}\\
G_Y & G_X & G_K\\};
\path[right hook->]
(m-1-1) edge (m-1-2);
\path[->]
(m-1-1) edge (m-2-1);
\path[->]
(m-1-2) edge (m-1-3);
\path[->]
(m-1-2) edge (m-2-2);
\path[->,font=\scriptsize]
(m-1-3) edge[out=165,in=15] node[above left, near end]{$s_c$} (m-1-2) edge[out=145,in=25] node[below]{$s_{Y_c}$} (m-1-1);
\path[->]
(m-1-3) edge (m-2-3);
\path[right hook->]
(m-2-1) edge (m-2-2);
\path[->]
(m-2-2) edge (m-2-3);
\path[->,font=\scriptsize]
(m-2-3) edge[out=165,in=15] node[above left, near end]{$s$} (m-2-2) edge[out=150,in=25] node[above left,near end]{$s_Y$} (m-2-1);
\end{tikzpicture}\]
Let $\Y$ be the normalisation of $\X$ in the function field of $Y$, and for each $c \in C^{\cl}$ let $\Y_c$ denote the closed fibre of $\Y$ at $c$.  After possibly removing finitely many points from $C$, we may assume that $\Y$ is smooth over $C$.  Indeed, the closed fibres $\Y_c$ are smooth except possibly for finitely many closed points $c\in C^{\cl}$ \cite[Proposition 10.1.21]{Liu}.  So, if necessary, we may replace $C$ by the largest open sub-scheme  $C'\subset C$ such that $\Y_c$ is smooth for every $c\in(C')^{\cl}$, and $\Y \to \X$ by the induced map of fibre products $\Y\times_C C' \to \X\times_C C'$.

So we assume that the fibres $\Y_c$ are smooth for all $c\in C^{\cl}$.  For each closed point $\bar x_c \in \X_c^{\cl}$, respectively $\bar y_c \in \Y_c^{\cl}$, choose an algebraic point $x_c \in X_c$, resp. $y_c \in Y_c$ specialising to $\bar x_c$, resp. $\bar y_c$ whose residue field is the unique unramified extension of $K_c$ whose valuation ring has residue field $k(\bar x_c)$, resp. $k(\bar y_c)$.  Let $\tilde S_c$, respectively $\tilde T_c$ denote the set of these chosen algebraic points of $X_c$, resp. $Y_c$ (see Definition \ref{stildekc}).  The groups $\pi_1(Y_c-\tilde T_c)$ and $\pi_1(X_c-\tilde S_c)$ are naturally quotients of $\pi_1(Y_c^{\tr})$ and $\pi_1(X_c^{\tr})$ respectively, hence $s_c$ naturally induces a section $\tilde s_c:G_{K_c}\rightarrow\pi_1(X_c-\tilde S_c)$, and likewise $s_{Y_c}$ induces a section $\tilde s_{Y_c}:G_{K_c}\rightarrow\pi_1(Y_c-\tilde T_c)$.  By Theorem \ref{galspeckc}, there exist specialisation homomorphisms $\Sp_X : \pi_1(X_c - \tilde S_c) \twoheadrightarrow G_{\X_c}$ and $\Sp_Y : \pi_1(Y_c - \tilde T_c) \twoheadrightarrow G_{\Y_c}$ and a commutative diagram
\[\begin{tikzpicture}[descr/.style = {fill = white}, baseline = (current bounding box.center)]
\matrix(m)[matrix of math nodes,
column sep=3em,
text height=2ex, text depth=0.25ex]
{\pi_1(Y_c^{\tr}) & \pi_1(X_c^{\tr}) & G_{K_c}\\[2em]
\pi_1(Y_c-\tilde T_c) &  & \\[0em]
& & G_{K_c}\\[0em]
& \pi_1(X_c-\tilde S_c) & \\[3.5em]
G_{\Y_c} & G_{\X_c} & G_{k(c)}\\};
\path[right hook->]
(m-1-1) edge (m-1-2);
\path[->>]
(m-1-1) edge (m-2-1);
\path[->]
(m-1-2) edge (m-1-3);
\path[->>]
(m-1-2) edge (m-4-2);
\path[->,font=\scriptsize]
(m-1-3) edge[out=165,in=15] node[above left, near end]{$s_c$} (m-1-2) edge[out=150,in=25] node[below]{$s_{Y_c}$} (m-1-1);
\path[-]
(m-1-3) edge[double distance=2pt] (m-3-3);
\path[font=\scriptsize,->]
(m-2-1) edge[out=0, in=165] (m-3-3);
\path[font=\scriptsize,->>]
(m-2-1) edge node[left]{$\Sp_Y$} (m-5-1);
\path[->]
(m-4-2) edge[out=5, in=205] (m-3-3);
\path[font=\scriptsize,->>]
(m-4-2) edge node[left]{$\Sp_X$} (m-5-2);
\path[->,font=\scriptsize]
(m-3-3) edge[out=187,in=17] node[above left, near end]{$\tilde s_c$} (m-4-2) edge[out=150,in=10] node[above left,near end]{$\tilde s_{Y_c}$} (m-2-1) edge[out=220,in=70] node[below right]{$\varphi_X$} (m-5-2) edge[out=220,in=70] node[below right,near end]{$\varphi_Y$} (m-5-1);
\path[font=\scriptsize,->>]
(m-3-3) edge node[left]{} (m-5-3);
\path[font=\scriptsize,right hook->]
(m-5-1) edge node[above]{} (m-5-2);
\path[->]
(m-5-2) edge (m-5-3);
\end{tikzpicture}\]
where we denote $\varphi_X:=\Sp_X\circ\tilde s_c$ and $\varphi_Y:=\Sp_Y\circ\tilde s_{Y_c}$.  By Theorem \ref{stildeptth}, we have $\varphi_Y(G_{K_c}) \subset D_{\tilde y_c} \subset G_{\Y_c}$ for a unique valuation $\tilde y_c$ on $\overline {k(\X_c)}$ extending a unique $k(c)$-rational point $\bar y_c \in \Y_c(k(c))$.  By commutativity of the above diagram, this implies that $\varphi_X(G_{K_c}) \subset D_{\tilde y_c} \subset G_{\X_c}$ for the same valuation $\tilde y_c$ on $\overline {k(\X_c)}$, whose restriction to $k(\X_c)$ corresponds to the image $\bar x'_c$ of $\bar y_c$ in $\X_c$.  Thus we have found, for every $c\in C^{\cl}$, unique $k(c)$-rational points $\bar y_c\in\Y_c(k(c))$ and $\bar x'_c\in\X_c(k(c))$ such that $\bar y_c$ maps to $\bar x'_c$ via $\Y_c\rightarrow\X_c$.  Moreover, $\bar x'_c$ must be the same as the point $\bar x_c$ associated to $\bar s_c^{\ab}$ (see Lemma \ref{zc=xc} and the paragraph before it).

Recall the section $s^{\ab}$ is associated to a $K$-rational point $z$ (see Lemma \ref{sabinxk} and the paragraph after it).  View $z\in X(K) = \X(C)$ as a section $z:C\rightarrow\X$, and denote by $\Y_z$ the pullback of the image $z(C)$ via the map $\Y\rightarrow\X$.  Then $\Y_z\rightarrow z(C)$ is a finite morphism, and we can assume, after possibly shrinking $C$, that $\Y_z$ is smooth. Since $z$ specialises to $\bar x_c\in\X_c(k(c))$ (Lemma \ref{zc=xc}), $\bar x_c\in z(C)$ and therefore $\bar y_c\in\Y_z(k(c))$ for every $c\in C^{\cl}$.  Then condition (v) of Definition \ref{conditions} implies that $\Y_z(K)\ne\emptyset$.  Thus $\Y_z(K)\subseteq\Y(K)=Y(K)\ne\emptyset$, which completes the proof of Proposition \ref{sisgeometric}.
\end{proof}

Thus $s(G_k)$ is contained in a decomposition group associated to a $K$-rational point $x\in X(K)$, which is unique (cf. Remark \ref{bscuniqueness}).  This concludes the proof of Theorem A.

\subsection{Proof of Theorem B}

In this section we explain how Theorem B is deduced from Theorem A.  Let $k$ be a field of characteristic zero that strongly satisfies the conditions of Definition \ref{conditions}.  Let $C$ be a smooth, separated, connected curve over $k$ with function field $K$.  For any finite extension $L$ of $K$, let $C^L$ denote the normalisation of $C$ in $L$, and for any flat, proper, smooth relative curve $\Y \to C^L$, let $\J_{\!\Y} := \Pic^0_{\Y/C^L}$ denote the relative Jacobian of $\Y$.  Assume that for any such finite extension $L$ and any such relative curve $\Y$ we have $T\Sha(\J_{\!\Y})=0$.

We will show that for any finite extension $L$ of $K$ and any smooth, projective, geometrically connected (not necessarily hyperbolic) curve $X$ over $L$, the birational section conjecture holds for $X$.

\begin{proposition}
With the above notation and hypotheses, let $s:G_L\rightarrow G_X$ be a section of $G_X$.  Then $s$ is geometric.
\end{proposition}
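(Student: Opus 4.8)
The plan is to deduce the Proposition from Theorem A by reducing, via two standard manipulations of birational Galois sections, to the case already treated there --- a hyperbolic curve carrying a rational point --- and then to quote Theorem A. By Remark \ref{bscuniqueness} it suffices throughout to produce an $L$-rational point from which $s$ arises.

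\emph{Step 1 (reduction to $X$ hyperbolic).} Given an open subgroup $H\subseteq G_X$ with $s(G_L)\subseteq H$, the associated finite morphism $Y\to X$ of smooth projective curves over $L$ has $Y$ geometrically connected (since $H\twoheadrightarrow G_L$) and $s$ factors through a section $s_Y:G_L\to G_Y=H$. I would first note that a hyperbolic such $Y$ always exists: as $s$ is a section, $s(G_L)\cap G_{X_{\ob L}}=1$, so the fixed field $F:=\ob{L(X)}^{\,s(G_L)}$ satisfies $F\cdot\ob L(X)=\ob{L(X)}$, whence every finite extension of $\ob L(X)$ inside $\ob{L(X)}$ is contained in $\ob L(Y_M)$ for some finite subextension $M$ of $F/L(X)$; taking $\ob L(Y)$ to be the function field of a cover of $X_{\ob L}$ of genus $\geq 2$ (such covers exist) gives an $M$ with $Y_M$ of genus $\geq 2$, and $H:=\Gal(\ob{L(X)}/M)$ is the required neighbourhood. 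Then, if $s_Y$ turns out to be geometric, arising from $y\in Y(L)$, the restriction to $L(X)$ of an extension of $y$ to $\ob{L(X)}=\ob{L(Y)}$ has residue field $L$ (because $L(Y)/L(X)$ is finite and $y$ has residue field $L$), hence corresponds to a point $x\in X(L)$; and since $s(G_L)=s_Y(G_L)$ lies in the decomposition group of that valuation in $G_X$, the section $s$ arises from $x$. So one may assume $X$ is hyperbolic.

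\emph{Step 2 (reduction to $X(L)\neq\emptyset$, by finite base change).} Enlarging the residue field of a closed point of $X$, I would pick a finite Galois extension $L'/L$ with $X_{L'}(L')\neq\emptyset$. Since $G_{L'}\lhd G_L$ and $G_{X_{L'}}$ is the preimage of $G_{L'}$ in $G_X$, the section $s$ restricts to $s':=s|_{G_{L'}}:G_{L'}\to G_{X_{L'}}$. Suppose $s'$ has been shown geometric; by Remark \ref{bscuniqueness} it arises from a \emph{unique} point $x'\in X_{L'}(L')$, say $s'(G_{L'})\subseteq D_{\tilde x'}$ for an extension $\tilde x'$ of $x'$ to $\ob{L(X)}$. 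For $g\in G_L$, conjugation by $s(g)$ fixes $s(G_{L'})$ and carries $D_{\tilde x'}$ to the decomposition group of $s(g)\cdot\tilde x'$, which lies over the valuation of the $\Gal(L'/L)$-translate of $x'$ determined by $g$; since $s'$ then also arises from that translate, uniqueness forces it to equal $x'$, so $\tilde x'$ and $s(g)\cdot\tilde x'$ are two extensions of $\nu_{x'}$ whose decomposition groups in $G_{X_{L'}}$ both contain $s(G_{L'})\neq 1$, and hence coincide by triviality of intersections of decomposition groups of distinct rank one valuations of $\ob{L(X)}$ \cite[Corollary 12.1.3]{CONF}. Thus $s(g)$ fixes $\tilde x'$ for all $g\in G_L$, i.e. $s(G_L)\subseteq D_{\tilde x'}\subseteq G_X$; moreover $x'$ is $\Gal(L'/L)$-invariant, so it descends to a point $x\in X(L)$, from which $s$ arises. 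It therefore suffices to prove geometricity of $s'$; that is, one may further assume $X(L)\neq\emptyset$ at the price of replacing $L$ by a finite extension $L'$ of $K$.

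\emph{Step 3 (applying Theorem A, and the main obstacle).} With $X_{L'}$ now hyperbolic over $L'$ and $X_{L'}(L')\neq\emptyset$, and since $\mathrm{char}\,L'=0$, after removing finitely many closed points from $C^{L'}$ I would obtain an open curve $C'$ with function field $L'$ together with a flat, proper, smooth relative curve $\X'\to C'$ with generic fibre $X_{L'}$; its relative Jacobian $\J':=\Pic^0_{\X'/C'}$ has generic fibre $J=\Pic^0_{X_{L'}}$, and $T\Sha(\J')=0$ by the standing hypothesis on Shafarevich--Tate groups (the identification of $\Sha$ for a model over the open $C'$ with one allowed by the hypothesis being a routine point, using the freedom to shrink the base and enlarge the finite extension of $K$). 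As $k$ satisfies conditions (iv), (v) and strongly satisfies (i), (ii), (iii) of Definition \ref{conditions}, Theorem A then applies to $\X'\to C'$ and shows the \textbf{BSC} holds for $X_{L'}$, in particular that $s'$ is geometric; running Steps 1 and 2 backwards yields that $s$ is geometric, the point being unique by Remark \ref{bscuniqueness}. The only genuinely nonformal ingredient is the descent in Step 2, where uniqueness of the point a section arises from is used twice --- to force $\Gal(L'/L)$-invariance of $x'$ and, through \cite[Corollary 12.1.3]{CONF}, to place $s(G_L)$ inside a single decomposition group of $G_X$ --- and this is the point I would expect to require the most care; Steps 1 and 3 are essentially bookkeeping around Theorem A, the mildest subtlety there being the matching of $\X'\to C'$ to the hypothesis on $T\Sha$.
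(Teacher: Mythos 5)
Your proposal is essentially the paper's own proof, following the same three-stage reduction: pass to a hyperbolic open neighbourhood of the section, pass to a finite Galois extension $M$ (your $L'$) of the base field over which the neighbourhood acquires a rational point, apply Theorem~A, and then descend using the normality of $G_M$ in $G_L$, the uniqueness statement from Remark~\ref{bscuniqueness}, and the triviality of intersections of decomposition groups of distinct rank-one valuations \cite[Corollary 12.1.3]{CONF}. The differences are only expository: in Step~1 you spell out (via the fixed field $\ob{L(X)}^{s(G_L)}$) why a hyperbolic neighbourhood exists, where the paper simply invokes the Hurwitz formula; in the descent step you split the argument into first identifying the underlying closed point (by uniqueness of the point the restricted section arises from) and then identifying the extension (by trivial intersections), whereas the paper collapses both into a single application of \cite[Corollary 12.1.3]{CONF}, deducing $s_Y(\sigma)\cdot\tilde y=\tilde y$ directly from $s_{Y_M}(G_M)\subset D^M_{\tilde y}\cap D^M_{s_Y(\sigma)\cdot\tilde y}$, and then observing that $s_Y(G_L)$ lies in the normaliser of $D^M_{\tilde y}$ in $G_Y$, which equals $D_{\tilde y}$. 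Both treatments of Step~3 leave the same routine point implicit, namely matching the model $\Y\to C'$ over the shrunken base to the standing $T\Sha$ hypothesis on $C^M$, which you flag and the paper glosses over.
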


\begin{proof}
By the Hurwitz formula, we may choose an open subgroup $H\subset G_X$ containing $s(G_L)$ such that, denoting by $Y\rightarrow X$ the corresponding finite morphism with $Y$ smooth, $Y$ is hyperbolic.  We have an isomorphism $H\simeq G_Y$, and $s$ naturally defines a section $s_Y:G_L\rightarrow G_Y$ of the natural projection $G_Y\twoheadrightarrow G_L$.  
Let $L'|L$ be a finite extension such that $Y(L')\ne\emptyset$, and let $M|L$ be a Galois extension of $L$ containing $L'$.  Then $Y_M(M)\ne\emptyset$, and $s_Y$ restricts to a section $s_{Y_M}:G_M\rightarrow G_{Y_M}$ of the absolute Galois group of $Y_M$.
\[\begin{tikzpicture}[descr/.style = {fill = white}, baseline = (current bounding box.center)]
\matrix(m)[matrix of math nodes,
row sep=3em, column sep=3em,
text height=1.5ex, text depth=0.25ex]
{1 & G_{Y_{\ob{L}}} & G_{Y_M} & G_M & 1\\
1 & G_{Y_{\ob{L}}} & G_Y & G_L & 1\\};
\path[->]
(m-1-1) edge (m-1-2);
\path[->,font=\scriptsize]
(m-1-2) edge (m-1-3);
\path[-]
(m-1-2) edge[double distance=2pt] (m-2-2);
\path[->]
(m-1-3) edge (m-1-4);
\path[right hook->,font=\scriptsize]
(m-1-3) edge (m-2-3);
\path[->,font=\scriptsize]
(m-1-4) edge (m-1-5) edge[out=160,in=20] node[above]{$s_{Y_M}$} (m-1-3);
\path[right hook->,font=\scriptsize]
(m-1-4) edge (m-2-4);
\path[->]
(m-2-1) edge (m-2-2);
\path[->]
(m-2-2) edge (m-2-3);
\path[->]
(m-2-3) edge (m-2-4);
\path[->,font=\scriptsize]
(m-2-4) edge (m-2-5) edge[out=160,in=20] node[above]{$s_Y$} (m-2-3);
\end{tikzpicture}\]
Let $C^M$ denote the normalisation of $C$ in $M$, and let $\Y \to C^M$ be a flat and proper model of $Y_M$ over $C^M$.  As in the proof of Proposition \ref{sisgeometric}, after possibly removing finitely many closed points from $C^M$ we may assume that the closed fibres $\Y_c:=\Y \times_{C^M} k(c)$ of $\Y$ are smooth for all $c\in (C^M)^{\cl}$.  Then $\Y\to C^M$ is a flat, proper, smooth relative curve whose generic fibre $Y_M$ is hyperbolic and has at least one $M$-rational point.  Theorem A then implies that $s_{Y_M}(G_M)$ is contained in a decomposition subgroup $D^M_{\tilde y} \subset G_{Y_M}$ for a unique $M$-rational point $y$ of $Y_M$ and some extension $\tilde y$ of $y$ to $\overline {k(X)}$.  Note we use a superscript $M$ to emphasise that $D^M_{\tilde y}$ is a subgroup of $G_{Y_M}$.

Since $M|L$ is a Galois extension, $G_M$ is a normal subgroup of $G_L$, hence $s_Y(G_L)$ normalises $s_{Y_M}(G_M)$ in $G_Y$.  Therefore, for any $\sigma \in G_L$, $s_{Y_M}(G_M)$ is also contained in $s_Y(\sigma)^{-1}D^M_{\tilde y} s_Y(\sigma) = D^M_{s_Y(\sigma)\cdot\tilde y}$, which implies that $\tilde y=s_Y(\sigma)\cdot\tilde y$ \cite[Corollary 12.1.3]{CONF}.  Thus, $s_Y(G_L)$ normalises $D^M_{\tilde y}$ in $G_Y$, so it is contained in the normaliser of $D^M_{\tilde y}$ in $G_Y$, which is precisely $D_{\tilde y} \subset G_Y$.  This implies that $s(G_L)$ is contained in the decomposition subgroup $D_{\tilde y} \subset G_X$ of the same valuation $\tilde y$ of $\overline {k(X)}$, whose restriction to $k(X)$ corresponds to the image $x$ of $y$ in $X$.  The point $x$ is then necessarily $L$-rational, since $D_{\tilde y}$ must map surjectively to $G_L$.
\end{proof}

This concludes the proof of Theorem B.

\subsection{Proof of Theorem C} 

In this section we prove Theorem C.  Assume the {\bf BSC} holds over all number fields.  We prove that the {\bf BSC} holds over all finitely generated fields over $\Q$ of transcendence degree $n \ge 1$. We argue by induction on $n$ and assume that the {\bf BSC} holds over all finitely generated fields over $\Q$ of transcendence degree $<n$. Let $K$ be a finitely generated field over $\Q$ of transcendence degree $n$ and $k\subset K$ a subfield which is algebraically closed in $K$ over which $K$ has transcendence degree $1$. We show the {\bf BSC} holds over $K$. 

It is well-known that in order to prove that the {\bf BSC} holds over $K$ it suffices to prove that the {\bf BSC} holds for the projective line over $K$ (cf. \cite[Lemma 2.1]{saidiBASC}).  Thus, we will show the following.

\begin{proposition}
With $K$ and $k$ as above, let $X=\Bbb P^1_K$, and let $s:G_K\to G_X$ a section of the projection $G_X\twoheadrightarrow G_K$.  Then $s$ is geometric.
\end{proposition}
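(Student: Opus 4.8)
The plan is to carry out the strategy outlined in the guide: given the section $s:G_K\to G_{\Bbb P^1_K}$, exhibit a neighbourhood of $s$ which is a hyperbolic, isotrivial curve, obtain finiteness of the relevant Shafarevich--Tate group from \cite{saiditamagawa}, apply Theorem~A over a suitable finite extension, and then descend. For the set-up, write $K=k(C)$ with $C$ a smooth, separated, connected curve over $k$. Since $k$ is algebraically closed in $K$ and $K$ has transcendence degree $1$ over $k$, the field $k$ is finitely generated over $\Q$ of transcendence degree $n-1$, and the same holds for the algebraic closure of $k$ in any finite extension of $K$. By the inductive hypothesis the \textbf{BSC} holds over all finite extensions of $k$ (these being finitely generated over $\Q$ of transcendence degree $n-1<n$), so $k$ strongly satisfies condition (i) of Definition~\ref{conditions}; being finitely generated over $\Q$ it also strongly satisfies conditions (ii)--(v). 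Thus $k$, and the analogous constant fields attached to finite extensions of $K$, satisfy all hypotheses of Theorems~A and~B.

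\emph{Construction of a hyperbolic isotrivial neighbourhood.} Choose a finite set $T\subset\Bbb P^1_k(k)$ of ``constant'' points of $\Bbb P^1$ together with a connected cyclic cover $V_0\to\Bbb P^1_k\setminus T$ of degree $n$, defined over $k$ (for instance a Kummer cover $y^n=\prod_i(t-a_i)^{m_i}$ defined over $\Q$, passing to a finite extension of $k$ — and descending at the end — if necessary to make it Galois with group $\Z/n$ over $\Bbb P^1_k\setminus T$), arranged so that its smooth compactification $Y_0\to\Bbb P^1_k$ is hyperbolic. Put $U:=\Bbb P^1_K\setminus T$; as $T$ consists of constant points, $U=(\Bbb P^1_k\setminus T)\times_k K$, and base-change invariance of the geometric fundamental group (as in the proof of Lemma~\ref{pullbackgx}, cf. \cite{Luminy}) shows that the surjection $\rho:\pi_1(U,\xi)\twoheadrightarrow\Z/n$ attached to $V_0\times_k K\to U$ stays surjective on $\pi_1(U_{\ob K})$. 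The section $s$ induces a section $s_U:G_K\to\pi_1(U,\xi)$ of $\pi_1(U,\xi)\twoheadrightarrow G_K$; set $\chi:=\rho\circ s_U:G_K\to\Z/n$, a homomorphism with finite image, hence trivial on $G_{K'}$ for some finite extension $K'$ of $K$. The twisted map $\rho':\pi_1(U,\xi)\to\Z/n$, $\sigma\mapsto\rho(\sigma)-\chi(\mathrm{pr}\,\sigma)$ with $\mathrm{pr}:\pi_1(U,\xi)\to G_K$, is again a homomorphism, surjective since it agrees with $\rho$ on $\pi_1(U_{\ob K})$; it defines a degree-$n$ cover $V^{\chi}\to U$ which is a $K$-form of $V_0\times_k K\to U$ — so its smooth compactification $Y$ is hyperbolic — and which becomes isomorphic to the constant cover $V_0\times_k K'$ over $K'$ (as $\rho'=\rho$ on $\pi_1(U_{K'})$), whence $Y$ is isotrivial. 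By the very definition of $\chi$ we have $s_U(G_K)\subseteq\ker\rho'=\pi_1(V^{\chi})$, so $s(G_K)$ is contained in $G_Y\subset G_{\Bbb P^1_K}$: thus $Y\to\Bbb P^1_K$ is a neighbourhood of $s$, hyperbolic and isotrivial.

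\emph{Conclusion via Theorem~A and descent.} Enlarging $K'$ if needed (it suffices to enlarge the constant field so the fixed curve $Y_0$ acquires a rational point), we may assume $Y(K')\ne\emptyset$. Let $k'$ be the algebraic closure of $k$ in $K'$ and $C'$ the normalisation of $C$ in $K'$, a smooth curve over $k'$ with function field $K'$; shrinking $C'$ as in the proof of Proposition~\ref{sisgeometric}, the constant family $\Y':=(Y_0\times_k k')\times_{k'}C'\to C'$ is a flat, proper, smooth relative curve with geometrically connected, hyperbolic generic fibre $Y_{K'}\simeq Y_0\times_k K'$ having a $K'$-rational point, and with constant — in particular isotrivial — relative Jacobian $\J':=\Pic^0_{\Y'/C'}$. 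By \cite{saiditamagawa} the group $\Sha(\J')$ is finite, so $T\Sha(\J')=0$, and Theorem~A (applied over $k',C',K',Y_{K'}$) shows that the \textbf{BSC} holds for $Y_{K'}$. In particular the section $G_{K'}\to G_{Y_{K'}}$ obtained by restricting $s$ (note $s(G_{K'})\subseteq G_Y\cap G_{\Bbb P^1_{K'}}=G_{Y_{K'}}$) is geometric, so $s(G_{K'})$ lies in a decomposition subgroup $D_{\tilde y}$ of $G_{\Bbb P^1_K}$ for some valuation $\tilde y$ of $\ob{K(t)}$. Finally, choose a finite Galois extension $M$ of $K$ containing $K'$ and argue as in the proof of Theorem~B: since $G_M\trianglelefteq G_K$ and $s(G_M)\subseteq s(G_{K'})\subseteq D_{\tilde y}$, the group $s(G_K)$ normalises $s(G_M)$, hence normalises $D_{\tilde y}$ by \cite[Corollary 12.1.3]{CONF}, and as the normaliser of $D_{\tilde y}$ in $G_{\Bbb P^1_K}$ is again a decomposition subgroup we get $s(G_K)\subseteq D_{\tilde y}$, i.e. $s$ is geometric.

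\emph{Main obstacle.} The heart of the matter is the middle step: producing a neighbourhood of $s$ that is simultaneously hyperbolic (so that Theorem~A applies) and isotrivial (so that finiteness of $\Sha$ is available from \cite{saiditamagawa}). What makes this feasible is the observation that a cover of $\Bbb P^1_K$ branched only over constant points is automatically isotrivial — its geometric cover is already defined over $\ob k$, and any $K$-form of a constant cover becomes constant over a finite extension because the relevant automorphism group is finite — so it suffices to realise $s$ inside such a cover with hyperbolic compactification, which is exactly what the twisting construction achieves. The remaining work is bookkeeping: verifying that the base change and normalisation do not spoil the hypotheses of Theorem~A, and carrying out the (by now standard) descent along $K'/K$; the one substantial external ingredient is the finiteness of $\Sha$ for isotrivial abelian schemes from \cite{saiditamagawa}.
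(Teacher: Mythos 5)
Your proof is correct and follows the same overall strategy as the paper: use the inductive hypothesis to verify that $k$ strongly satisfies conditions (i)--(v), exhibit a hyperbolic isotrivial neighbourhood of $s$, invoke finiteness of $\Sha$ for isotrivial abelian schemes from \cite{saiditamagawa}, apply Theorem~A over a finite extension, and descend as in Theorem~B. Where you differ is in the construction of the neighbourhood. The paper fixes an open $U\subset\Bbb P^1_k$, picks a \emph{characteristic} open subgroup $\tilde\Delta\subset\pi_1(U_{\bar k},\bar\xi)$ with $Y_{\bar k}$ smooth hyperbolic, lets $\Delta\subset\pi_1(U_{\ob K},\bar\xi)$ be the corresponding subgroup (normal in $\pi_1(U_K,\xi)$ since $\tilde\Delta$ is characteristic), and sets $\tilde H=\Delta\cdot s_U(G_K)$; isotriviality is automatic because the geometric part of $\tilde H$ is exactly $\Delta\simeq\tilde\Delta$, which lives over $\bar k$. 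You instead take an explicit cyclic Kummer cover $V_0\to\Bbb P^1_k\setminus T$ and twist the associated character $\rho$ by $\chi=\rho\circ s_U$ so that $s_U(G_K)\subset\ker\rho'$; isotriviality again follows because $\rho'$ agrees with $\rho$ on $\pi_1(U_{\ob K})$. The twisting device is a special (abelian) case of the paper's ``$\Delta\cdot s_U(G_K)$'' construction; the paper's version is slightly more flexible (any characteristic open subgroup will do, no need to arrange a Galois structure or adjoin roots of unity) and avoids your notational clash of using $n$ both for the transcendence degree and the degree of the cyclic cover, while your version is more explicit. You also spell out the passage to $K'$ with $Y(K')\ne\emptyset$ and the Galois descent back to $K$, which the paper compresses into the remark ``Without loss of generality, we can assume that $Y(K)\ne\emptyset$ (cf.\ proof of Theorem~B).'' Both arguments buy the same thing; the difference is cosmetic rather than substantive.
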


\begin{proof}
Let $\ob{K}$ and $\ob{k}$ be the algebraic closures of $K$ and $k$, respectively, induced by the geometric point $\xi$ defining $G_X$.  We claim that there exists an open subgroup $H \subset G_X$ containing $s(G_K)$ such that, denoting by $Y \to X$ the corresponding finite morphism with $Y$ smooth, $Y$ is hyperbolic and isotrivial, meaning that $Y_{\ob{K}}$ descends to a smooth curve $Y_{\bar k}$ over $\bar k$.
%Let $H\subset G_X$ be an open subgroup of $G_X$ which contains $s(G_K)$, and corresponds to a finite morphism $Y\to X$ with $Y$ smooth. Thus $H=G_Y$ and the section $s$ induces a section $\tilde s:G_K\to G_Y$ of the natural projection $G_Y\twoheadrightarrow G_K$ with $\tilde s(G_K)=s(G_K)$. One easily verifies that the section $s$ is geometric if (and only if) the section $\tilde s$ is geometric. We claim there exists an open subgroup $H\subset G_X$ as above, the corresponding finite morphism $Y\to X$, such that $Y$ is hyperbolic and isotrivial, meaning that $Y_{\ob K}$ descends to a smooth curve $Y_{\ob k}$ over $\ob k$.
Indeed, let $U\subset \Bbb P^1_k$ be an open subset, $U_{\bar k}=U\times _k \bar k$, $U_K=U\times _k K$, and $U_{\ob{K}}=U\times _k \ob{K}$.  We have a natural commutative diagram of exact sequences
$$
\CD
1@>>> \pi_1(U_{\ob{K}},\bar \xi) @>>> \pi_1(U_K,\xi) @>>> G_{K} @>>> 1\\
@. @VVV  @VVV   @VVV @. \\
1@>>> \pi_1(U_{\bar k},\bar \xi) @>>> \pi_1(U_k,\xi)@>>> G_{k}@>>> 1\\
\endCD
$$
where the left vertical map is an isomorphism. Let $\tilde \Delta$ be a characteristic open subgroup of $\pi_1(U_{\bar k},\bar \xi)$ corresponding to a finite morphism $Y_{\bar k}\to \Bbb P^1_{\bar k}$ with $Y_{\bar k}$ smooth and hyperbolic.  Such a subgroup exists by the Riemann-Hurwitz formula and the fact that $\pi_1(U_{\bar k},\bar \xi)$ is finitely generated. We write $\Delta$ for the corresponding subgroup of $\pi_1(U_{\ob{K}},\bar \xi)$ and $Y_{\ob{K}}\to \Bbb P^1_{\ob{K}}$ the corresponding finite morphism with $Y_{\ob{K}}$ smooth. The section $s$ induces a section $s_U : G_K \to \pi_1(U_{K},\xi)$ of the projection $\pi_1(U_K,\xi) \twoheadrightarrow G_{K}$. Let $\widetilde H=\Delta \cdot s_U(G_K)$ and $H$ the inverse image of $\widetilde H$ in $G_X$. Then $H$ and the corresponding finite morphism $Y\to X$ are as claimed above.

%Let $\tilde s:G_K\to G_Y$ be the section of $G_Y=H$ which is induced by $s$.  We claim that $\tilde s$ is geometric.

The section $s$ induces a section $s_Y : G_K \to G_Y = H$ of the natural projection $G_Y\twoheadrightarrow G_K$ with $s_Y(G_K) = s(G_K)$, and one easily verifies that the section $s$ is geometric if (and only if) the section $s_Y$ is geometric.  Let $C$ be a separated, smooth and connected curve over $k$ with function field $k(C)=K$, and $\Y\to C$ a flat, smooth and proper relative $C$-curve with generic fibre $\Y_K=Y$. Without loss of generality, we can assume that $Y(K)\ne \emptyset$ (cf. proof of Theorem B). Let $\J$ be the relative jacobian of $\Y$. The Shafarevich-Tate group $\Sha (\J)$ is finite by \cite[Theorem 4.1]{saiditamagawa} since $\J_{\ob{K}}$, being the jacobian of $Y_{\ob{K}}$, is isotrivial (i.e. descends to an abelian variety over $\bar k$).  Moreover, $k$ strongly satisfies the conditions in Definition \ref{conditions}, since it is finitely generated and by the above induction assumption. Then the section $s_Y$, and a fortiori the section $s$, is geometric by Theorem A.
\end{proof}

This concludes the proof of Theorem C.

\addcontentsline{toc}{section}{References}
%\bibliography{BSCbib}

\paragraph{}
\begin{flushleft}
Mohamed Sa\"idi

m.saidi@exeter.ac.uk
\end{flushleft}

\begin{flushleft}
Michael Tyler

m20tyler@gmail.com
\end{flushleft}

\end{document}